\documentclass{amsart}
\usepackage{mathpreamble}

\newcommand{\s}{\widetilde{s}}
\newcommand{\g}{\widetilde{g}}
\newcommand{\M}{\widetilde{M}}

\begin{document}

\title{Negatively curved K\"ahler metrics on total spaces of a class of vector bundles}

\author{Hanyu Wu}
\address{School of Mathematical Sciences, Xiamen University, Xiamen, Fujian, 361005, China.}
\email{{19020230157192@stu.xmu.edu.cn}}
\author{Bo Yang}
\thanks{The second named author is partially supported by National Natural Science Foundation of China
with the grant numbers: 11801475, 12141101, and 12271451, and Natural Science Foundation of Fujian Province of China with the grant No.2019J05012.}
\address{School of Mathematical Sciences, Xiamen University, Xiamen, Fujian, 361005, China.}
\email{{boyang@xmu.edu.cn}}
\date{11/16/2025}

\begin{abstract}
In this paper we show an abundance of complete K\"ahler metrics with negative holomorphic bisectional curvature on total spaces of certain vector bundles. Assume that such total spaces are endowed with a wider class of nonpositively curved K\"ahler metrics. We prove dimension estimates on holomorphic functions on these manifolds, as well as Liouville theorems for holomorphic mappings between them.
\end{abstract}

\subjclass[2020]{32Q05, 32Q15, 53C55}

\maketitle

\markleft{Negative curvature on total spaces of a class of vector bundles}
\markright{Negative curvature on total spaces of a class of vector bundles}

\setcounter{tocdepth}{1}
\tableofcontents

\section{Introduction}

\subsection{Statement of main results} Motivated by the classical uniformization theorem for Riemann surfaces, Greene-Wu \cite{GW77}, Siu \cite{Siu}, and Yau \cite{Yau1991} proposed to study the function theory of complete K\"ahler manifolds with a suitable negative (or nonpositive) curvature condition. Let $BI(M, g)<0$ ($BI(M, g) \leq 0$ or $BI \leq 0$ for short) denote the negative (nonpositive) holomorphic bisectional curvature of a K\"ahler manifold $(M, g)$. Many problems on complete noncompact K\"ahler manifolds with $BI<0$ remain to be investigated. We refer to \cite{GW77}, \cite{Siu}, and \cite{Yau1991} for further discussions.

We begin with two standard examples and a result due to Seshadri-Zheng \cite{SZ}.

(1) $\mathbb{C}^n$ (as a product complex structure) admits a complete K\"ahler metric with negative sectional curvature. We refer to Appendix \ref{negUN} for more discussions, including Seshadri's example on \cite[p.488]{Sesha} and a family of expanding K\"ahler-Ricci solitons constructed by Cao \cite{C97}.

(2) The unit ball $\mathbb{B}^n \subset \mathbb{C}^n$ has a natural holomorphic submersion to $\mathbb{B}^{n-1}$ by forgetting one coordinate. In this case, the base, the fiber, and the total space all admit complete K\"ahler metrics with $BI<0$.

(3) In \cite{SZ}, the authors study the non-existence of complete K\"ahler metrics with bisectional curvature pinched by two negative constants ($-C_2 \leq BI \leq -C_1<0$) on complex product manifolds. As an application, they proved that the total space of a locally trivial holomorphic fiber bundle with positive-dimensional fibers does not admit any complete K\"ahler metrics with $-C_2 \leq BI \leq -C_1<0$.

In this work, we study complete K\"ahler metrics with $BI<0$ on total spaces of certain vector bundles over compact K\"ahler manifolds. In some sense, the manifolds in study belong to a class which lies in between the  examples (1) and (2) above. Specifically, we construct complete K\"ahler metrics with $BI<0$ on total spaces of certain vector bundles over some negatively curved compact K\"ahler manifolds. The construction is achieved by the Calabi Anstaz which originated in \cite{Calabi1}, and it demonstrates an abundance of complete K\"ahler metrics with $BI<0$. Since Calabi's results (\cite{Calabi1} and \cite{Calabi2}), the Calabi method has been powerful in the construction of extremal K\"ahler metrics and other canonical K\"ahler metrics. In our case, there are no defining equations of the desired K\"ahler metrics. The key role is played by a suitable monotone quantity which captures the $BI<0$ condition. In fact, this monotone quantity originates in the study of $U(n)$-invariant K\"ahler metrics on $\mathbb{C}^n$ with $BI<0$ (see Appendix \ref{negUN}).

\begin{theorem}\label{main1}
Let $(M, g)$ be a compact K\"ahler manifold with $BI(M, g)<0$. Assume there is a holomorphic line bundle $L \rightarrow M$ with a Hermitian metric $h$ whose curvature form is $-\lambda g$ for a constant $\lambda>0$. Let $X$ be the total space of a holomorphic vector bundle $E$ in the form of $E=L^{\oplus r}$ with $r \geq 1$. We consider a smooth K\"ahler metric $\widetilde{g}$ on $X$ in the form
\begin{align}
\widetilde{\omega}=\pi^{\ast}\omega_g+\sqrt{-1}\partial\overline{\partial} u.  \label{themcalabi}
\end{align}
Here the Hermitian structure on $E$ is induced from $(L, h)$. For simplicity, let $(E, h)$ be the corresponding Hermitian metric. For any $(p, v) \in X$, we define $u(p, v)=u(h(v))$ as a smooth function on $X$ which only depends on $h$.

Then there is a natural correspondence between the following two objects
\begin{enumerate}[label=(\roman*)]
 \item  A complete K\"ahler metric $\widetilde{g}$ with $BI<0$ ($BI \leq 0$) in the form of (\ref{themcalabi});
 \item  A real-valued function $\chi(t) \in C^{\infty}[0, +\infty)$ satisfying $\chi(0)=0$ and $\chi^{\prime}(t)<0$ ($\chi^{\prime}(t) \leq 0$) for any $t \in [0, +\infty)$.
\end{enumerate}
\end{theorem}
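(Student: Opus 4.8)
The plan is to reduce the curvature of $\widetilde{g}$ to a system of ordinary differential inequalities in the single radial variable $t = h(v)$, and then to recognize that these inequalities, together with the smoothness and completeness requirements, are governed by exactly one monotone profile $\chi$. I would start by fixing a point $(p,v)\in X$ and choosing local holomorphic coordinates $z=(z_1,\dots,z_n)$ on $M$ that are normal for $g$ at $p$, together with a local holomorphic frame of $L$ whose weight $\psi$ (so that $h=e^{-\psi}$) satisfies $\psi(p)=0$, $d\psi(p)=0$ and, from the curvature hypothesis $\sqrt{-1}\,\partial\bar\partial\psi=-\lambda\omega_g$, the relation $\psi_{i\bar j}(p)=-\lambda\delta_{ij}$. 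Writing the fiber coordinates as $\xi=(\xi_1,\dots,\xi_r)$ and using the $U(r)$-symmetry of $E=L^{\oplus r}$, I would arrange $v=(\xi_1,0,\dots,0)$ so that $t=|\xi_1|^2$ at $p$. A direct computation of $\sqrt{-1}\,\partial\bar\partial u(t)$ then shows that at this point $\widetilde{\omega}$ is block diagonal, with horizontal block $(1+\lambda t u')\delta_{ij}$, a single vertical radial entry $u'+tu''=(tu')'$, and $r-1$ vertical angular entries equal to $u'$. Positivity of $\widetilde{g}$ is therefore equivalent to $u'>0$, $(tu')'>0$ and $1+\lambda tu'>0$ on $[0,\infty)$.

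Using this block form and the vanishing of the first derivatives at $p$, I would compute the holomorphic bisectional curvatures $\widetilde{R}(Y,\bar Y,Z,\bar Z)$ for $Y,Z$ running over the three distinguished types (horizontal, vertical radial, vertical angular). Each such curvature reduces to an explicit expression in $u',u'',u'''$, $\lambda$, $t$ and the curvature tensor of $(M,g)$; the restriction of $\widetilde{g}$ to a single fiber is $U(r)$-invariant on $\mathbb{C}^r$, so the purely vertical curvatures are controlled by the monotonicity of the $U(n)$-invariant model from Appendix~\ref{negUN}, while the mixed and horizontal components additionally involve $\lambda$ and the bisectional curvature of the base. The hypothesis $BI(M,g)<0$ enters precisely here, to guarantee that the horizontal--horizontal curvatures keep the correct sign after the contribution of $\pi^{\ast}\omega_g$ is twisted by the factor $1+\lambda tu'$.

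The heart of the argument is to verify that this whole system of inequalities collapses to the single monotonicity statement $\chi'(t)<0$ for a suitably defined profile $\chi$, and that the boundary behaviour forcing $\widetilde{g}$ to extend smoothly across the zero section is exactly $\chi(0)=0$. I would define $\chi$ through the combination of $u'$, $(tu')'$ and $\lambda$ that simultaneously diagonalizes the constraints --- the same quantity whose monotonicity characterizes $BI<0$ in the $U(n)$-invariant case --- then establish both implications of the equivalences $BI<0 \Leftrightarrow \chi'<0$ and $BI\le 0 \Leftrightarrow \chi'\le 0$, and finally recover $u$ from $\chi$ by integrating the defining relation. Completeness reduces to the divergence of the radial integral $\int^{\infty}\sqrt{(tu')'}\,d|\xi_1|$ along the fiber directions, the base being compact; I would show this divergence is automatic under $\chi(0)=0$ and $\chi'<0$, so that every admissible $\chi$ yields a complete metric and the correspondence is genuinely bijective.

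I expect the main obstacle to be the second and third steps in combination: organizing the full bisectional curvature tensor of the Calabi-ansatz metric into a form where a single scalar quantity controls all sign conditions, and then proving that the monotonicity of this quantity is \emph{equivalent} (not merely sufficient) to $BI<0$, while simultaneously encoding both completeness and smooth extension across the zero section. The sign analysis of the mixed horizontal--vertical curvatures, where the base curvature and the twisting constant $\lambda$ interact, is the delicate point that ties the negativity of the base to the negativity of the total space.
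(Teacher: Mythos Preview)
Your plan is correct and matches the paper's approach: the paper likewise works at a point with normal base coordinates and a frame with $dh(p)=0$, writes the metric in block form with entries $P=1+\lambda t u'$, $Q=(tu')'$ (and $S=u'$ for the angular fiber directions when $r>1$), and defines the controlling profile explicitly as $\chi(t)=-\,tQ'/Q$, exactly the quantity from the $U(n)$-invariant analysis you invoke. The point you flag as the main obstacle is resolved by two concrete identities: writing the radial curvature as $I=\chi'/Q$ gives the necessity of $\chi'<0$, while for the mixed horizontal--vertical term one computes $\frac{d}{dt}\bigl(\tfrac{\lambda tQ}{1-\chi}-P\bigr)=\lambda tQ\,\chi'/(1-\chi)^2<0$ with initial value $-1$, so sufficiency follows; for $r>1$ the additional angular conditions $u''>0$ and $(tu'')'>0$ are shown to follow automatically from $\chi'<0$ by a direct estimate on $Num(t)=t^2T'-tT+\int_0^t T$.
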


As an application of Theorem \ref{main1}, we have following examples of complete K\"ahler manifolds with $BI<0$.

(1) Let $(M, g)$ be a compact ball quotient with its K\"ahler-Einstein metric $g$ so that $Ric(g)=-g$, and $L=K_{M}^{-k}$ for any $k \geq 1$. Moreover, there are also examples of compact ball quotients satisfy $K_M=-dL$ for some integer $d \geq 2$ with $L \in \operatorname{Pic}(M)$. Then we may choose a Hermitian metric $h$ on $L$ so that its local component is $(\operatorname{det}g)^{-\frac{1}{d}}$. For example, there is a fake projective plane $M$ which satisfies $K_M=-3L$ for some $L \in \operatorname{Pic}(M)$. Theorem \ref{main1} applies in these cases.

(2) Let $M$ be a complex submanifold of a smooth compact ball quotient $(N, g_{B})$ with the canonical K\"ahler-Einstein metric $Ric(g_B)=-g_B$. Assume that $M$ is endowed with the induced metric $g$. Let $L$ be the restriction of $K_N^{-1}$ along $M$. We consider $h$ to be the restriction of the Hermitian metric $\widetilde{h}$ on $K_N^{-1}$ as in Example (1). Since $\Theta (L, \widetilde{h})=-\sqrt{-1}\partial \bar{\partial} \ln \widetilde{h}=-\omega_{g_B}$, we get $\Theta (L, h)=-\sqrt{-1}\partial \bar{\partial} \ln h=-\omega_{g}$. This verifies the assumption of Theorem \ref{main1}.

(3) Let $M$ be a complete intersection in $\mathbb{CP}^n$ defined by $n-d$ equations of homogeneous degree $k$, According to a remarkable result due to Mohsen \cite{Mohsen}, if $n \geq 4d-1$, for sufficiently large $k$, there exists such a manifold $M$ so that the induced Fubini-Study metric satisfies $BI<0$. If $d \geq 3$, by the Lefchetz hyperplane theorem, $H^2(M, \mathbb{Z})=\mathbb{Z}$. Therefore $h^{1,1}=1$ and $M$ admits a Hodge metric $g$ with $BI<0$. In other words, $M$ admits a line bundle $(L, h)$ with its curvature $\Theta(L, h)=-\omega_g$. By Theorem \ref{main1}, we get a simply-connected complete noncompact K\"ahler manifold $(X, \widetilde{g})$ with $BI<0$. Note that $X$ is not Stein. This is related to a question proposed by Wong, see {\cite[p.100]{Wong80}}.

(4) Let $(M, g)$ be a compact K\"ahler manifold with $BI(M, g) \leq 0$ and a line bundle $L \rightarrow M$ as in Theorem \ref{main1}. We may derive analogous results as in Theorem \ref{main1} for any complete K\"ahler metric in the form of (\ref{themcalabi}) with $BI<0$ away from the zero section of $E$ (or $BI \leq 0$ on $X$). In particular, we may choose $(M, g)$ as a flat complex torus and use theta functions to construct a line bundle $L \rightarrow M$ with $c_1(L)=-\lambda g$ for some constant $\lambda>0$. We refer to \cite[p.310]{GH} or \cite[p.49]{Debarre} for more information.

(5) Even in the case that $M$ is a compact ball quotient with $L=K_{M}^{-1}$ so that the corresponding total space $X$ is $K(\pi, 1)$, the resulting metric on $X$ as in Theorem \ref{main1} may not have nonpositive Riemannian sectional curvature. We refer to Example \ref{euclideanvol} for more information.

\begin{remark}
Theorem \ref{main1} is reminiscent of a result of Bishop-O'Neill on Riemannian sectional curvature for a warped product $X=M \times_{f} F$. They consider the metric $ds_X^2=ds_M^2+f^2 ds_F^2$ for some $0<f \in C^{\infty}(M)$. We refer to {\cite[Theorem 7.5 on p.26]{BO}} for a detailed statement. In particular, one of their results states that if $(M, ds_M^2)$ has negative sectional curvature, $f$ is strictly convex, and $F$ has the real dimension $1$, then $(X, ds_X^2)$ has negative sectional curvature.
\end{remark}

\begin{remark}
In Theorem \ref{main1}, we assume that the curvature form of the line bundle $L$ is a negative multiple of the K\"ahler metric $g$ on $M$. This condition effectively leads to a nice formula of the curvature tensor of $\widetilde{g}$ (defined in (\ref{themcalabi})), see Lemma \ref{lcurvature} for a detailed statement. After a preliminary version of the paper was written, we learned from Xueyuan Wan that for any Griffiths negative vector bundle $(E, h)$ over a compact K\"ahler manifold $(M, g)$, the full curvature tensor of $\widetilde{g}$ when $u$ is chosen as $h(v)$ was solved in \cite[Proposition 3.1]{KWZ2025}. In particular, they show that such a curvature tensor vanishes along the fiber direction. In the line bundle case, such a metric satisfies the equality case of (\ref{dimcmain1}) in Proposition \ref{upperdim}.
\end{remark}

\begin{remark}
Consider a holomorphic fibration $f: X \rightarrow M$ where $M$ is compact. If $X$ is also compact, we assume in addition that $f$ is not isotrival (compare with Seshadri-Zheng's result on {\cite[p.146]{SZ}}). Suppose that both the base $M$ and each fiber of $f$ admit complete K\"ahler metrics with $BI<0$. Does $X$ admit complete K\"ahler metrics with $BI<0$? Such a question was proposed by To-Yeung \cite[p.512]{TY2011} in their study of negatively curved K\"ahler metrics on Kodaira fibration surfaces. We refer to Cheung \cite{Cheung}, Tsai \cite{Tsai}, To-Yeung \cite{TY2011}, and Wan \cite{Wan2024} for related results when $X$ is compact. Theorem \ref{main1} provides some supporting examples for this question in the noncompact case.
\end{remark}

Let $p$ be a fixed point on a complete K\"ahler manifold $(X, \widetilde{g})$, and $d_{\widetilde{g}}$ the distance function from $p$. Given a real number $\alpha \geq 0$, we say that a holomorphic function $f \in \mathcal{O}(X)$ has \emph{polynomial growth of order at most $\alpha$} if there exists some constant $C(\alpha, f)$ so that
\[
|f(q)| \leq C(d(q, p)+1)^{\alpha}, \ \ \ \ \forall q \in X.
\]
For any real number $d \geq 0$, let $\mathcal{O}_d(X, \widetilde{g})$ denote the complex vector space of holomorphic functions with polynomial growth of order at most $d$. For the K\"ahler structures on $X$ with $BI \leq 0$ constructed in Theorem \ref{main1}, we may estimate the dimensions of holomorphic functions with polynomial growth.

\begin{corollary}\label{dimcountintro}
Given $(M, g)$ of complex dimension $n-r$ with $n>r \geq 1$ and a holomorphic line bundle $L \rightarrow M$ as in Theorem \ref{main1}, let $X$ be the total space of the vector bundle $E=L^{\oplus r}$ as in Theorem \ref{main1}. Then for any complete metric $(X, \widetilde{g})$ in the form of (\ref{themcalabi}) with $BI \leq 0$ and any $k \in \mathbb{Z}^{+}$, we have
\begin{equation}
\operatorname{dim} \mathcal{O}_k(X, \widetilde{g}) \geq \sum_{p=0}^{k} \Big[\operatorname{dim} H^0(M, L^{-p})  \binom{r+p-1}{r-1}\Big].   \label{dimcomp1}
\end{equation}
Moreover, if (\ref{dimcomp1}) holds along a sequence $\{k_j\}_{j=1}^{\infty} \rightarrow +\infty$, then the corresponding K\"ahler metric $\widetilde{g}$ has its $\widetilde{\chi}$ (in Theorem \ref{main1}) being identically zero, or equivalently $u$ (in Theorem \ref{main1}) a linear function.
\end{corollary}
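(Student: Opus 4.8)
The plan is to produce the required number of polynomial-growth holomorphic functions by an explicit algebraic construction, to control their growth through a lower bound on the distance function coming from the sign of $\widetilde\chi$, and finally to read off the rigidity from a quantitative strengthening of that distance bound when $\widetilde\chi\not\equiv 0$, matched against the upper bound of Proposition~\ref{upperdim}. First I would produce the functions: a holomorphic function on $X$ homogeneous of degree $p$ along the fibers of $\pi\colon X\to M$ is precisely the contraction of a section of $\operatorname{Sym}^p E^{\ast}$ against the $p$-th power of the tautological section of $\pi^{\ast}E$. Because $E=L^{\oplus r}$ gives $E^{\ast}=(L^{-1})^{\oplus r}$, one has the splitting
\[
\operatorname{Sym}^p E^{\ast}=\bigoplus_{|\alpha|=p}L^{-|\alpha|}=(L^{-p})^{\oplus\binom{r+p-1}{r-1}},\qquad \alpha\in\mathbb{Z}_{\ge 0}^{r},
\]
so that $\dim H^0(M,\operatorname{Sym}^p E^{\ast})=\dim H^0(M,L^{-p})\binom{r+p-1}{r-1}$. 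Writing $\tau_1,\dots,\tau_r$ for the tautological sections of $\pi^{\ast}L$ and choosing $s\in H^0(M,L^{-p})$, each $f_{s,\alpha}=\langle\pi^{\ast}s,\tau_1^{\alpha_1}\cdots\tau_r^{\alpha_r}\rangle$ is a genuine holomorphic function on $X$; functions attached to different fiber-degrees are linearly independent, and for fixed $p$ the direct-sum decomposition keeps them independent. Summing over $0\le p\le k$ yields exactly $\sum_{p=0}^{k}\dim H^0(M,L^{-p})\binom{r+p-1}{r-1}$ independent holomorphic functions, so it remains to place those of fiber-degree $\le k$ inside $\mathcal{O}_k(X,\widetilde g)$.

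Next I would estimate growth, which proves~(\ref{dimcomp1}). In a local frame of $L$ with the potential normalized at the base point, $|f_{s,\alpha}(x,v)|\le(\sup_M|s|)\,h(v)^{p/2}$, and $\sup_M|s|<\infty$ since $M$ is compact; thus everything reduces to a lower bound of the form $d_{\widetilde g}(p_0,(x,v))\ge c\,\sqrt{h(v)}$. Restricting $\widetilde\omega$ to a fiber gives a $U(r)$-invariant K\"ahler metric on $\mathbb{C}^r$ with radial potential $u$, whose radial coefficient is $(tu')'$ with $t=h(v)$; using compactness of $M$ the distance from $p_0$ to $(x,v)$ is comparable to this radial distance up to a controlled error. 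The condition $BI\le 0$ is exactly the monotonicity of $\widetilde\chi$ from Theorem~\ref{main1}, which forces $(tu')'$ to be non-decreasing and hence bounded below by its positive value on the zero section; integrating the radial length element gives $d_{\widetilde g}\ge c\sqrt{h(v)}$, and therefore $|f_{s,\alpha}|\le C\,(d_{\widetilde g}+1)^{p}\le C\,(d_{\widetilde g}+1)^{k}$, which shows $f_{s,\alpha}\in\mathcal{O}_k$ and establishes~(\ref{dimcomp1}).

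Finally, for the rigidity, if $\widetilde\chi\not\equiv 0$ the same monotonicity upgrades the previous estimate: being non-decreasing, nonnegative, and nontrivial, the radial coefficient satisfies $(tu')'\ge c\,t^{c_0}$ for some $c,c_0>0$, whence $d_{\widetilde g}(p_0,(x,v))\ge c'\,h(v)^{(1+c_0)/2}$. A fiber-degree-$q$ function then has growth order at most $q/(1+c_0)<q$, so for every large $k$ all functions of the first step with $q\le(1+c_0)k$ lie in $\mathcal{O}_k$, strictly exceeding the right side of~(\ref{dimcomp1}); equivalently, the upper bound of Proposition~\ref{upperdim}, whose equality case singles out the linear-$u$ (flat-fiber) metric, strictly exceeds the count in~(\ref{dimcomp1}) exactly when $\widetilde\chi\not\equiv0$. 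Hence equality in~(\ref{dimcomp1}) along $k_j\to+\infty$ is possible only when $\widetilde\chi\equiv 0$, i.e.\ $u$ is linear.

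The main obstacle is the quantitative distance analysis of the last step: one must convert the qualitative sign condition $\widetilde\chi\not\equiv0$ into the \emph{polynomial} lower bound $(tu')'\ge c\,t^{c_0}$ and then into the super-linear growth $d_{\widetilde g}\ge c'\,h(v)^{(1+c_0)/2}$, uniformly over the compact base so that the base directions do not spoil the comparison between $d_{\widetilde g}$ and the fiber norm. One must further match this against Proposition~\ref{upperdim} so that equality in the dimension count characterizes genuine linearity of $u$ rather than mere asymptotic flatness of the fibers. This is precisely where the explicit curvature formula (Lemma~\ref{lcurvature}) and the monotone quantity $\widetilde\chi$ of Theorem~\ref{main1} carry the argument.
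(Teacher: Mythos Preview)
Your proposal is correct and follows essentially the same route as the paper. The paper treats the line bundle case in detail (Corollary~\ref{dimcount1}) and then observes that the general case follows by the dimension count $\dim_{\mathbb{C}}\mathcal{P}_p=\binom{r+p-1}{r-1}$ for homogeneous polynomials; your use of $\operatorname{Sym}^p E^{\ast}\cong (L^{-p})^{\oplus\binom{r+p-1}{r-1}}$ is the same construction in slightly more invariant language. For the growth bound the paper argues exactly as you do: $BI\le 0$ gives $\widetilde\chi\le 0$, hence $T=(tu')'$ is nondecreasing, hence $\widetilde s\ge\sqrt{T(0)}\sqrt{t}$. For the rigidity, the paper's argument is again the one you give first: if $\widetilde\chi\not\equiv 0$ then $-\widetilde\chi(t)\ge c_1>0$ for $t\ge t_0$, which integrates to $T(t)\ge c_2 t^{c_1}$ and then $\widetilde s\ge c_3 t^{(1+c_1)/2}$, so functions of fiber degree $k+1$ already lie in $\mathcal{O}_k$ for large $k$, contradicting equality since $L^{-1}$ is ample.

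One small correction: your appeal to Proposition~\ref{upperdim} in the ``equivalently'' clause should be dropped. That proposition is formulated for total spaces of \emph{line} bundles, so it does not directly apply when $r>1$; and in any case neither you nor the paper needs it, since the direct contradiction via the improved distance estimate already closes the argument.
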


From now on, we consider a wider class of K\"ahler metrics on the total spaces of certain line bundles. The goal is to study dimension estimates of holomorphic functions and related rigidity properties of such metrics. For the sake of convenience, we introduce two assumptions which will be frequently used in later sections.

\begin{itemize}
    \item  \namedlabel{AssumeA}{\textbf{Assumption (A)}}: Let $M$ be a compact K\"ahler manifold of complex dimension $n-1$ and $\pi: L \rightarrow M$ a holomorphic line bundle over $M$ with $c_1(L)<0$. Let $h$ denote a Hermitian metric on $L$, and $X$ the total space of $L$ with the corresponding complex structure $J$. We assume that $X$ admits a K\"ahler metric $\widetilde{g}$. Note that the zero section $M$ is a complex submanifold of $X$. Let $\widetilde{s}$ denote the distance from any point $x \in X$ to $M$ with respect to $\widetilde{g}$.  Let $Cut(M) \subset X$ denote the set of all cut-focal points with respect to $M$. We refer to Gray \cite[Chapter 8]{Gray} or Section \ref{sec4} for more background on normal exponential maps and related notions.

    \item  \namedlabel{AssumeB}{\textbf{Assumption (B)}}: Let $\sigma: \mathcal{N}_{M} \rightarrow M$ be the normal bundle with respect to $(M, g)$ in $X$. Assume that the corresponding normal exponential map $\exp^{\perp}: \mathcal{N}_{M} \rightarrow X$ is a smooth (fiberwise) diffeomorphism in the sense that it is a global diffeomorphism between $\mathcal{N}_{M}$ and $X$ and restricts to a diffeomorphism between $\sigma^{-1}(p)$ and $\pi^{-1}(p)$ for each $p \in M$. Note that $\widetilde{s} \in C^{\infty}(X \setminus M)$ in this case.

\end{itemize}

\begin{proposition} \label{upperdim}
Let $(X, \widetilde{g})$ be a complete K\"ahler manifold which satisfies \ref{AssumeA}. Suppose there exists some function $0 \leq k \in C^0[0, +\infty)$ with
\begin{equation}
    \int_0^{\infty} sk(s)ds<\infty
    \label{kscondi1}
\end{equation}
so that the holomorphic sectional curvature along $e_{n}=\frac{1}{\sqrt{2}}(\nabla \widetilde{s} -\sqrt{-1}J \nabla \widetilde{s})$ satisfies
\begin{equation}
R(e_n, \overline{e_n}, e_n, \overline{e_n}) \geq -k(\widetilde{s}(x)),\  \  \text{for any}\ \ x \in X \setminus (M \cup Cut(M)).        \label{curbound1}
\end{equation}
We define
\begin{equation}
\eta=e^{\int_0^{\infty} sk(s)ds}>0.      \label{eta_intro}
\end{equation}

Then for any $d \in \mathbb{R}^{+}$
\begin{equation}
\operatorname{dim} \mathcal{O}_d(X, \widetilde{g}) \leq \sum_{k=0}^{\lfloor \eta d \rfloor} \operatorname{dim} H^0(M, L^{-k}).   \label{dimcmain1}
\end{equation} Here $\lfloor c \rfloor$ denotes the greatest integer less than or equal to $c$. Moreover, if (\ref{dimcmain1}) holds as an equality along a sequence $\{d_j\}_{j=1}^{\infty} \subset \mathbb{R}^{+}$ which tends to infinity, then $k(\widetilde{s}) \equiv 0$ on $[0, \infty)$.
\end{proposition}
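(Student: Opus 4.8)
The plan is to exploit the structure of $X$ as the total space of a line bundle together with the curvature hypothesis, reducing the dimension estimate to a one–variable growth comparison along the fibers. I would first recall that every $f \in \mathcal{O}(X)$ admits a fiberwise homogeneous (Taylor) expansion $f = \sum_{k \geq 0} f_k$, where $f_k$ is homogeneous of degree $k$ in the fiber coordinate and is canonically identified with a holomorphic section $f_k \in H^0(M, L^{-k})$; concretely, in a local holomorphic frame $e$ of $L$ with $h(v) = |w|^2 e^{-\varphi}$ one has $|f_k(x)| = \|f_k\|_{h^{-k}}(\pi(x))\, h(x)^{k/2}$. Since the full expansion determines $f$, it suffices to prove that $f \in \mathcal{O}_d(X, \widetilde{g})$ forces $f_k = 0$ for every $k > \eta d$; then the linear map $f \mapsto (f_0, \dots, f_{\lfloor \eta d\rfloor})$ embeds $\mathcal{O}_d(X,\widetilde{g})$ into $\bigoplus_{k=0}^{\lfloor \eta d\rfloor} H^0(M,L^{-k})$, which yields (\ref{dimcmain1}).

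Next I would fix $p \in M$ and restrict to the fiber $F_p = \pi^{-1}(p) \cong \mathbb{C}$ with coordinate $w$, on which $f|_{F_p}(w) = \sum_k a_k(p) w^k$ is entire. Writing $S_p(\rho) = \max_{|w|=\rho} \widetilde{s}(w\,e(p))$, the Cauchy integral estimate gives
\[
|a_k(p)|\, \rho^{k} \le \max_{|w|=\rho} |f| \le C\,(S_p(\rho)+1)^{d}.
\]
Hence $a_k(p) = 0$ as soon as $S_p(\rho) = o(\rho^{k/d})$, so the whole estimate hinges on the geometric bound $\limsup_{\rho\to\infty} \log S_p(\rho)/\log\rho \le \eta$, i.e. $\widetilde{s} \le C|w|^{\eta}$ along fibers.

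To obtain this bound I would run a Jacobi-field comparison along the normal geodesics $\gamma(s)$ emanating from $M$, on which $\gamma'(s) = \nabla \widetilde{s}$ and $\widetilde{s}(\gamma(s)) = s$. Because $\widetilde{g}$ is K\"ahler, $J\nabla\widetilde{s}$ is parallel along $\gamma$, and the angular spreading in the complex normal direction is measured by a positive function $J(s)$ with $J(0)=0$, $J'(0)=1$ solving $J'' + R(e_n,\overline{e_n},e_n,\overline{e_n})\,J = 0$ up to the first focal point. The hypothesis (\ref{curbound1}) then gives $J''(s) \le k(s)\,J(s)$, and I would prove the elementary comparison lemma that, when $\int_0^\infty s k(s)\,ds < \infty$, the solution of $z'' = k z$, $z(0)=0$, $z'(0)=1$ satisfies $z(s) \le \eta s$: from $z(s) = s + \int_0^s (s-\tau)k(\tau)z(\tau)\,d\tau$ one gets $z(s)/s \le 1 + \int_0^s \tau k(\tau)\,(z(\tau)/\tau)\,d\tau$, and Gronwall yields $z(s)/s \le \exp\!\big(\int_0^s \tau k(\tau)\,d\tau\big) \le \eta$. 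Thus $J(s) \le \eta s$, and converting back to the fiber coordinate through $d(\log\rho)/ds = 1/J(s) \ge 1/(\eta s)$ gives $\rho \ge c\, s^{1/\eta}$, i.e. $S_p(\rho) \le C_p\,\rho^{\eta}$; inserting this into the Cauchy bound makes $|a_k(p)| \le C'\rho^{\eta d - k} \to 0$ whenever $k > \eta d$, so $f_k = 0$, as desired.

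I expect the comparison-geometry step to be the main obstacle: one must justify that the holomorphic sectional curvature lower bound along the \emph{distinguished} direction $e_n$, rather than a bound on all tangent planes, suffices to control the relevant angular Jacobi field; one must carefully handle the cut–focal locus $Cut(M)$ (where $\widetilde{s}$ fails to be smooth and the comparison is valid only up to the first focal time, so one argues up to $Cut(M)$ and passes to the closure); and one must identify the fiber radial and angular directions with $\nabla\widetilde{s}$ and $J\nabla\widetilde{s}$ using \ref{AssumeA}. For the rigidity statement I would trace equality back through each inequality: equality in (\ref{dimcmain1}) along $d_j \to \infty$ forces the effective growth exponent to equal $\eta$ (here $c_1(L)<0$ makes $\dim H^0(M,L^{-k})$ eventually positive, so a strictly smaller exponent would strictly decrease the right-hand side for large $d_j$), which forces $J(s) = \eta s$ for large $s$ and hence $J'' \equiv 0$; feeding this back into $J'' = -R(e_n,\overline{e_n},e_n,\overline{e_n})\,J$ together with $J'' \le k J$ then forces $k(\widetilde{s}) \equiv 0$ on $[0,\infty)$.
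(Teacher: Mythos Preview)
Your overall architecture (fiberwise Taylor expansion plus a growth bound forcing high-order coefficients to vanish) is the same as the paper's, but the crucial geometric step you propose does not go through under \ref{AssumeA} alone. The gap is in the sentence ``converting back to the fiber coordinate through $d(\log\rho)/ds = 1/J(s)$.'' That identity presupposes that (i) the minimizing geodesic from a fiber point to $M$ is the radial segment in that fiber (so in particular each fiber is totally geodesic and the normal exponential map is fiber-preserving), and (ii) the induced metric on the fiber is rotationally symmetric in the $w$-coordinate. Neither is part of \ref{AssumeA}; totally geodesic fibers and a fiberwise normal exponential map are exactly the extra hypotheses \ref{AssumeB} and ``fibers totally geodesic'' that enter only in Proposition~\ref{lowerdim}, not here. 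Without them, your Jacobi field $J(s)$ controls the spreading of nearby \emph{normal} geodesics in $X$, which has no a priori relation to the holomorphic fiber coordinate $|w|=\rho$, so the bound $S_p(\rho)\le C_p\rho^{\eta}$ is unjustified. The cut locus is a further issue: your comparison $J''\le kJ$ is valid only up to the first focal time, and past $Cut(M)$ you have neither a smooth $\widetilde{s}$ nor a well-defined $J$, so ``passing to the closure'' does not help.

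The paper sidesteps all of this by never comparing $\widetilde{s}$ with $|v|$ at infinity. Instead it proves a three-circle inequality for $M_f(\widetilde{s})=\sup_{T(M,\widetilde{s})}|f|$ on tubes (Proposition~\ref{thrcir}), using the complex Hessian bound $\widetilde{s}_{n\bar n}\le u'/(2u)$ coming from the Riccati equation~(\ref{he1}) together with the Calabi support-function trick to handle $Cut(M)$. Letting $\widetilde{s}_3\to\infty$ yields that $M_f(\widetilde{s})/e^{\eta d\,h(\widetilde{s})}$ is nonincreasing, so $M_f(\widetilde{s})\le C\widetilde{s}^{\eta d}$ as $\widetilde{s}\to 0$. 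The comparison between $\widetilde{s}$ and $|v|$ is then needed only \emph{near} $M$, where it is elementary via Fermi coordinates ($|v|\le C\widetilde{s}$). This gives the vanishing-order bound $\le\lfloor\eta d\rfloor$ along $M$ and hence (\ref{dimcmain1}). Your rigidity outline is essentially the paper's: if equality held with a strictly smaller effective exponent $\eta-\delta$ one would contradict $\dim H^0(M,L^{-k})>0$ for large $k$; the paper traces this back to $\lim_{t\to\infty}u'(t)=\eta$ in Lemma~\ref{eta}, which forces $k\equiv 0$.
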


\begin{proposition} \label{lowerdim}
Assume that $(X, \widetilde{g})$ is a complete K\"ahler manifold which satisfies \ref{AssumeA} and \ref{AssumeB}. Suppose the holomorphic sectional curvature satisfies
\begin{equation}
-k(\widetilde{s}) \leq R(e_n, \overline{e_n}, e_n, \overline{e_n}) \leq 0,\ \  \  \text{for any}\ \ \widetilde{s} \in (0, \infty)      \label{curbound2}
\end{equation}
where $k(s) \geq 0$ is continuous on $[0, \infty)$ and satisfies (\ref{kscondi1}). Moreover, for each $p \in M$, $\pi^{-1}(p)$, the fiber of the line bundle $L$ at $p$, is totally geodesic in $(X, \widetilde{g})$. Then
\begin{equation}
\operatorname{dim} \mathcal{O}_d(X, \widetilde{g}) \geq \sum_{k=0}^{\lfloor d \rfloor} \operatorname{dim} H^0(M, L^{-k}),
\label{dimcmain2}
\end{equation}
holds for any $d \in \mathbb{R}^{+}$. Similarly, $k(\widetilde{s}) \equiv 0$ if (\ref{dimcmain2}) holds as an equality along a sequence $\{d_j\}_{j=1}^{\infty}$ tending to infinity.
\end{proposition}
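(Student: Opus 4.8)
The plan is to exhibit, for each $0\le k\le\lfloor d\rfloor$, an explicit subspace of $\mathcal{O}_d(X,\widetilde{g})$ of dimension $\dim H^0(M,L^{-k})$, built from holomorphic sections of $L^{-k}$, and to verify its growth by a comparison argument along the fibres. First I would set up the correspondence: a point of $X$ is a pair $(p,v)$ with $v\in L_p$, and a section $\sigma\in H^0(M,L^{-k})=H^0(M,(L^{\ast})^{\otimes k})$ pairs tautologically with $v^{\otimes k}\in L_p^{\otimes k}$ to give a holomorphic function
\[
f_\sigma(p,v)=\langle\sigma(p),v^{\otimes k}\rangle,
\]
which in a local holomorphic frame reads $a(p)\,w^{k}$ with $w$ the linear fibre coordinate. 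Each $f_\sigma$ is holomorphic on $X$ and homogeneous of degree $k$ along the fibres, and $\sigma\mapsto f_\sigma$ is injective. Writing the pairing through $h$ gives the pointwise identity $|f_\sigma(p,v)|=|\sigma|_{h^{-k}}(p)\,|v|_h^{\,k}$; since $M$ is compact and $\sigma$ is continuous, $|\sigma|_{h^{-k}}$ is bounded, so $|f_\sigma|\le C_\sigma\,|v|_h^{\,k}$ on all of $X$.

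The heart of the argument, and the step I expect to be the main obstacle, is the geometric comparison $|v|_h\le C\,(\widetilde{s}(p,v)+1)$ uniformly on $X$. Under \ref{AssumeA} and \ref{AssumeB} the fibre $\Sigma_p=\pi^{-1}(p)$ is totally geodesic and is identified by the normal exponential map with the normal space at the origin $o_p=\Sigma_p\cap M$; hence $\nabla\widetilde{s}$ is tangent to $\Sigma_p$, the restriction $\widetilde{s}|_{\Sigma_p}$ is the intrinsic distance to $o_p$, and $e_n$ is a unit $(1,0)$ vector tangent to $\Sigma_p$. By the Gauss equation for the totally geodesic $\Sigma_p$, the holomorphic sectional curvature $R(e_n,\overline{e_n},e_n,\overline{e_n})$ is a fixed positive multiple of the Gauss curvature $K$ of $\Sigma_p$, so (\ref{curbound2}) becomes $-k(\widetilde{s})\le K\le 0$ after absorbing the constant into $k$. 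I would then run a Jacobi-field comparison: $K\le 0$ forces the perpendicular field $f$ with $f(0)=0,\ f'(0)=1$ to satisfy $f''=-Kf\ge 0$, whence $f(\widetilde{s})\ge\widetilde{s}$, while $K\ge -k(\widetilde{s})$ together with (\ref{kscondi1}) gives $f(\widetilde{s})\le\eta\,\widetilde{s}$ with $\eta$ as in (\ref{eta_intro}). The bound $f\le\eta\widetilde{s}$ shows $\Sigma_p$ has finite total curvature, hence is asymptotically conical; since $K\le 0$ the cone angle at infinity is $\ge 2\pi$, and comparing the linear fibre coordinate $w$ (a global conformal coordinate, as $\Sigma_p\cong\mathbb{C}$) with the geodesic distance yields $|w|\le C\,\widetilde{s}^{1/\alpha}\le C\,(\widetilde{s}+1)$ for some $\alpha\ge 1$. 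The delicate point is to make this uniform over the compact base and to control the conformal factor in the non-rotationally-symmetric case, for which the finite-total-curvature asymptotics (Cohn--Vossen/Huber) are exactly what is needed. Since $|v|_h$ is a bounded multiple of $|w|$, this gives the claim.

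With the key estimate in hand the lower bound is immediate. Because $M$ is compact, $\widetilde{s}$ and the distance from a fixed base point differ by at most $\operatorname{diam}_{\widetilde{g}}(M)$, so $|f_\sigma|\le C_\sigma\,|v|_h^{\,k}\le C(\widetilde{s}+1)^{k}$ places $f_\sigma$ in $\mathcal{O}_k(X,\widetilde{g})\subseteq\mathcal{O}_d(X,\widetilde{g})$ whenever $k\le\lfloor d\rfloor$. The subspaces $V_k=\{f_\sigma:\sigma\in H^0(M,L^{-k})\}$ are linearly independent across different $k$, since their elements are homogeneous of distinct degrees along the fibres (restrict to a single fibre and read off the coefficient of $w^{k}$), so
\[
\dim\mathcal{O}_d(X,\widetilde{g})\ge\sum_{k=0}^{\lfloor d\rfloor}\dim V_k=\sum_{k=0}^{\lfloor d\rfloor}\dim H^0(M,L^{-k}),
\]
which is (\ref{dimcmain2}).

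Finally, for the rigidity I would track the constant $\alpha=\alpha_p=\lim_{\widetilde{s}\to\infty}f'(\widetilde{s})=1+\int_0^{\infty}(-K)f\,d\widetilde{s}\ge 1$, with $\alpha=1$ iff $K\equiv 0$ on $\Sigma_p$. If $K\not\equiv 0$ on some fibre then $\alpha>1$, so the sharper asymptotics $|w|\sim\widetilde{s}^{1/\alpha}=o(\widetilde{s})$ show that every $f_\sigma$ with $\sigma\in H^0(M,L^{-k})$ in fact lies in $\mathcal{O}_{k/\alpha}$; hence $\mathcal{O}_d$ already contains all such $f_\sigma$ with $k\le\alpha d$, giving $\dim\mathcal{O}_d\ge\sum_{k=0}^{\lfloor\alpha d\rfloor}\dim H^0(M,L^{-k})$, which strictly exceeds the right-hand side of (\ref{dimcmain2}) for large $d$ because $c_1(L^{-1})>0$ makes $\dim H^0(M,L^{-k})$ grow like $k^{\,n-1}$. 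Therefore equality in (\ref{dimcmain2}) along a sequence $d_j\to\infty$ forces $\alpha\equiv 1$, i.e. $K\equiv 0$ on every fibre, which is precisely $R(e_n,\overline{e_n},e_n,\overline{e_n})\equiv 0$ and hence $k(\widetilde{s})\equiv 0$.
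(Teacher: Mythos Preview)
Your overall strategy---building $f_\sigma$ from sections of $L^{-k}$ and controlling $|v|_h$ against $\widetilde s$ on each totally geodesic fibre---is exactly the paper's. The difference lies in how the key comparison is established. The paper does not rely on a Cohn--Vossen/Huber argument; instead it extracts a specific estimate of Li--Tam \cite{LT1991} (recorded here as Lemma~\ref{sup-inf}), comparing $\sup_{\partial B_g(p,r)}|v|$ to $\inf_{\partial B_g(p,R)}|v|$ through the conformal invariance of the Dirichlet energy of $\log\log|z|$. Combined with the subharmonicity of the conformal factor and the area bound coming from your Jacobi comparison, this yields only $|v|\le C\,r^{1+\varepsilon}$ for every $\varepsilon>0$, uniformly over compacta in $M$; the lower bound (\ref{dimcmain2}) then follows after $\varepsilon\to 0$. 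Your sharper claim $|w|\le C\,\widetilde s^{\,1/\alpha}$ in the non-rotationally-symmetric case is precisely the content of Li--Tam's Corollary~3.3, not a consequence of the classical finite-total-curvature theory: Huber gives the conformal type, but not the quantitative relation between the global conformal coordinate and geodesic distance that you need here. So the step you flag as ``delicate'' is in fact the whole analytic content of the proof, and the tool you name does not supply it.

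Your rigidity argument has a separate logical gap. You assume $K\not\equiv 0$ on \emph{one} fibre $\Sigma_{p_0}$, deduce $\alpha_{p_0}>1$, and then assert that every $f_\sigma$ with $\sigma\in H^0(M,L^{-k})$ lies in $\mathcal{O}_{k/\alpha}$. But $f_\sigma$ is a global function and its growth is governed by the \emph{worst} fibre: if some other fibre has $\alpha_q=1$, then $|f_\sigma|\sim\widetilde s^{\,k}$ along $\Sigma_q$, so $f_\sigma\notin\mathcal{O}_{k/\alpha}$ and no extra functions are produced. The paper instead fixes an arbitrary fibre, argues that equality in (\ref{dimcmain2}) forces $\lim\,\ln|v|/\ln\widetilde s=1$ on that fibre, and then invokes Li--Tam's Corollary~3.3 to conclude $\int_{\Sigma_p}K=0$; since the fibre was arbitrary, $K\equiv 0$ everywhere. (A minor point: your formula $\alpha_p=\lim_{\widetilde s\to\infty}f'(\widetilde s)$ is the asymptotic slope along a single radial geodesic; the relevant quantity is the angular average, which by Gauss--Bonnet equals $1-\tfrac{1}{2\pi}\int_{\Sigma_p}K\,dA$.)
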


The proofs of Proposition \ref{upperdim} and \ref{lowerdim} are based on several important works on complete K\"ahler manifolds with suitable curvature conditions. The curvature lower bound (\ref{curbound1}) with an integral condition (\ref{kscondi1}) is related to Greene-Wu's result \cite[Theorem C on p.56]{GW79} on quasi-isometry of exponential maps on Riemannian manifolds with a pole. Here we do prefer an integral bound than any pointwise one, as Theorem \ref{main1} indicates that we may perturb a negatively curved metrics so that its bisectional curvature tends to $-\infty$ along a sequence of points tending to infinity. The dimension upper bound (\ref{dimcmain2}) is proved by a three-circle type inequality first discovered by Liu \cite[Theorem 8 on p.2913]{Liu2016}, which was later generalized by Yu-Zhang \cite{YZ2022} where the integral condition (\ref{kscondi1}) was used again. When we apply their methods on $X$ in Proposition \ref{upperdim}, the Hessian estimate of the distance function to the zero section $M$ (see Tam-Yu \cite{TY2012}) plays an important role. Finally, the dimension lower estimate (\ref{dimcmain2}) relies on a crucial lemma (Lemma \ref{sup-inf}) due to Li-Tam \cite{LT1991} on the growth of the Green function on a complex plane. To address the equality case in (\ref{dimcmain2}), we have to make full use of Li-Tam's main result \cite[Corollary 3.3]{LT1991}. Note that the integral condition (\ref{kscondi1}) fits well with the finite total curvature condition on a complete surface, which is the main assumption of \cite{LT1991}.

It is worth pointing out that the lower bound (\ref{curbound2}) is necessary for the existence of holomorphic functions of polynomial growth. We do not need to assume (\ref{curbound2}) in Corollary \ref{dimcountintro} as K\"ahler metrics constructed in Theorem \ref{main1} satisfies some amount of symmetries along its fiber direction. In general, we may construct examples so that (\ref{dimcmain2}) fails if we only assume a nonpositive curvature condition, at least for the total space of a trivial line bundle.

\begin{example}\label{noholointro}
    For any compact K\"ahler manifold $(M, g)$ with $BI(g) \leq 0$, there exists a complete K\"ahler metric $\widehat{g}$ on $\mathbb{C}$ with nonpositive Gauss curvature so that any nonconstant holomorphic function on $X=M \times \mathbb{C}$ grows at least exponentially with respect to the product metric $\widetilde{g}=g \times \widehat{g}$.
\end{example}

Let $\mathcal{O}(X, Y)$ denote the space of holomorphic maps between two complex manifolds $X$ and $Y$. Our next result studies this space under a polynomial growth condition.

\begin{proposition} \label{xtoy}
Given two compact K\"ahler manifolds $M_1$ and $M_2$, assume that both $M_1$ and $M_2$ admit negative line bundles $L_1$ and $L_2$ respectively. Let $X$ and $Y$ be the total space of $\pi_1: L_1 \rightarrow M_1$ and $\pi_2: L_2 \rightarrow M_2$ respectively. We assume that $X$ admits a complete K\"ahler metric $\widetilde{g}$ which satisfies \ref{AssumeA} and \ref{AssumeB}, and $Y$ admits a K\"ahler metric $\widetilde{h}$ which satisfies \ref{AssumeA}. For any $x \in X$ and $y \in Y$, let $\s_1(x)=d_{\widetilde{g}} (f(x), M_1)$ and $\s_2(y)=d_{\widetilde{h}} (y, M_2)$. Moreover, we assume
the following conditions holds
\begin{enumerate}
\item  Each fiber of $L_1 \rightarrow M_1$ is totally geodesic.
\item  Let $e_{n}=\frac{1}{\sqrt{2}}(\nabla \widetilde{s_1} -J \nabla \widetilde{s_1})$ defined on $X \setminus M_1$. There exists some function $0 \leq k \in C^0[0, +\infty)$ with (\ref{kscondi1}) so that $R_{M_1}(e_n, \overline{e_n}, e_n, \overline{e_n}) \geq -k(\s_1(x))$ for any $x \in X \setminus M_1$.
\item  $BI(Y, \widetilde{h}) \leq 0$ and $M_2$ is Kobayashi hyperbolic.
\end{enumerate}
For any given $d>0$, we define
\begin{align}
\mathcal{O}_{d}(X, Y)=\{f \in \mathcal{O}(X, Y)\ |\  \s_2(f(x)) \leq  C(\s_1(x)+1)^d\ \text{for some constant}\  C\,\}. \label{mapOddef}
\end{align}
For any fixed $d < \frac{1}{\eta}$ where $\eta$ is defined in (\ref{eta_intro}), we get that $f$ is a constant map if we further assume that the image of $\pi_2 \circ f$ is a single point in $M_2$. In general, any $f \in \mathcal{O}_{d}(X, Y)$ factors through $\pi_1$, i.e. there exists $\underline{f} \in \mathcal{O}(M_1, Y)$ so that $f=\underline{f} \circ \pi_1$.
\end{proposition}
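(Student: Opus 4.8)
The plan is to analyze $f$ one source-fiber at a time, reduce it to an entire function valued in a single target fiber by hyperbolicity, and then use the parabolicity of the source fibers to force constancy. Fix $p\in M_1$ and write $\Sigma_p=\pi_1^{-1}(p)$, which under \ref{AssumeB} is a smoothly embedded complex curve with a complete induced metric; set $r=\s_1|_{\Sigma_p}$ for the geodesic distance from its center. By hypothesis (1) each $\Sigma_p$ is totally geodesic, so its intrinsic Gauss curvature equals the ambient holomorphic sectional curvature $R_{M_1}(e_n,\overline{e_n},e_n,\overline{e_n})$; hypothesis (2) with (\ref{kscondi1}) then says $\Sigma_p$ is a complete surface whose Gauss curvature is bounded below by $-k(r)$ with finite weighted integral. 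Hence $\Sigma_p$ is biholomorphic to $\mathbb{C}$ and parabolic, and by the Greene--Wu quasi-isometry (the same input behind Proposition \ref{upperdim}) the geodesic distance $r$ is comparable to a power of the natural holomorphic fiber coordinate $w$, the distortion being governed precisely by $\eta$ in (\ref{eta_intro}); concretely a function of geodesic growth order $d$ becomes one of order $\eta d$ in $w$.

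First I would settle where each fiber is sent. Writing $f_p=f|_{\Sigma_p}$, the map $\pi_2\circ f_p\colon \Sigma_p\cong\mathbb{C}\to M_2$ is an entire curve into the compact Kobayashi (hence Brody) hyperbolic manifold $M_2$, so it is constant, say with value $q(p)$. Thus $f(\Sigma_p)$ lies in the single target fiber $\pi_2^{-1}(q(p))\cong\mathbb{C}$, and $f_p$ is recorded by one entire function $F_p\colon\mathbb{C}\to\mathbb{C}$. Because $c_1(L_2)<0$, the fiberwise squared Hermitian norm satisfies $\sqrt{-1}\partial\overline{\partial}\log h_2>0$ off $M_2$, so $\log h_2$ is plurisubharmonic on $Y\setminus M_2$ and $\log h_2(f_p)=\log|F_p|^2+\mathrm{const}$ is subharmonic on $\Sigma_p$.

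Next I would convert the hypothesis $\s_2(f(x))\le C(\s_1(x)+1)^d$ into genuine polynomial control of $F_p$. Granting the comparison $\s_2(v)\ge c\,|v|_{h_2}$ for $|v|$ large (and noting $|v|_{h_2}$ differs from $|F_p(w)|$ by a bounded factor since $M_2$ is compact), the growth hypothesis yields $|F_p(w)|\le C_1\,\s_2(f_p(w))\le C_2\,(r+1)^d\le C_3\,|w|^{\eta d}$, i.e. $F_p$ has polynomial growth of order at most $\eta d$ in $w$. By Hadamard's three-circle theorem (equivalently the three-circle inequality of Liu and Yu--Zhang behind Proposition \ref{upperdim}), $\sup_{|w|=\rho}\log|F_p|$ is convex in $\log\rho$ with slope at infinity $\le\eta d$; since $d<\tfrac{1}{\eta}$ forces $\eta d<1$, $F_p$ is a polynomial of degree $<1$, hence constant. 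Therefore $f$ is constant on every $\Sigma_p$ and descends to a holomorphic $\underline{f}\colon M_1\to Y$ with $f=\underline{f}\circ\pi_1$, proving the general assertion. In the special case where $\pi_2\circ f$ is a single point $q_0$, the descended map $\underline{f}$ sends the compact connected $M_1$ into $\pi_2^{-1}(q_0)\cong\mathbb{C}$ and is therefore constant by the maximum principle, so $f$ is constant.

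The main obstacle I anticipate is the growth comparison $\s_2(v)\gtrsim|v|_{h_2}$ on $Y$: the ambient distance to the zero section must grow at least linearly in the fiber coordinate. A merely logarithmic bound would control only the order of $F_p$ and would not exclude transcendental entire functions of order $<1$, so the linear bound is what makes the clean threshold $d<\tfrac{1}{\eta}$ correct. I expect to extract it from \ref{AssumeA} together with $c_1(L_2)<0$ and $BI(Y,\widetilde{h})\le 0$, which prevent the fiber directions of $\widetilde{h}$ from collapsing; making the Greene--Wu comparison on the source fibers fully quantitative (so the exponent is exactly $\eta d$, with the equality analysis when $k\equiv 0$) is the remaining bookkeeping.
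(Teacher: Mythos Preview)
Your overall architecture matches the paper's: use Kobayashi hyperbolicity of $M_2$ to force $\pi_2\circ f$ to be constant on each source fiber, so that $f$ sends fibers to fibers, and then argue that each restricted map $f_p\colon \pi_1^{-1}(p)\to\pi_2^{-1}(q)$ is constant. The divergence is in how you establish that last step. The paper does \emph{not} pass to the holomorphic fiber coordinates at all. Instead it proves a three-circle inequality (Lemma~\ref{ctoc}) directly in terms of the distance functions: with $\phi(y)=d_{g_2}(O,y)$ on the target plane, the nonpositive curvature of $g_2$ gives the Hessian bound $\phi_{w\bar w}\ge \tfrac{1}{2\phi}$, hence $\log\phi(f(\cdot))$ is subharmonic on the source, and the three-circle argument of Proposition~\ref{thrcir} runs through verbatim with $\log|f|$ replaced by $\log\phi(f)$. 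This yields that $M_f(s)/e^{\eta d\,h(s)}$ is nonincreasing, so $d<\tfrac{1}{\eta}$ forces $f_p$ constant. For the special case where $\pi_2\circ f$ is a single point, the paper invokes Proposition~\ref{xtoc} (the same mechanism with source $X$ rather than a single fiber); your alternative of first factoring through $\pi_1$ and then applying the maximum principle to $\underline{f}\colon M_1\to\mathbb{C}$ is perfectly valid and arguably cleaner.

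The obstacle you flag, namely $\s_2(v)\gtrsim |v|_{h_2}$ on $Y$, is a genuine gap in your route and is precisely what the paper's approach is designed to sidestep. Under the stated hypotheses on $Y$ (only \ref{AssumeA} and $BI\le 0$; the fibers of $L_2$ are \emph{not} assumed totally geodesic), there is no obvious mechanism that provides this linear lower bound, and the paper never proves or uses it. Your source-side conversion $(r+1)^d\le C|w|^{\eta d}$ is likewise not a direct consequence of Greene--Wu: the fiber metric $g_1$ is not rotationally symmetric, so the quasi-isometry of Lemma~\ref{eta} does not translate into a pointwise comparison between $r$ and $|w|$. Both difficulties evaporate once you observe that the three-circle inequality can be run with $\log d_{g_2}(O,f(\cdot))$ in place of $\log|F_p|$; the subharmonicity input needs only $K_2\le 0$ on the target fiber, which follows from $BI(Y,\widetilde h)\le 0$ via the Gauss equation for complex curves.
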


As a corollary of Proposition \ref{xtoy}, we consider the group of holomorphic biholomorphism of $X$ in Proposition \ref{lowerdim} ($\operatorname{Aut}(X)$ for short). Under the extra assumption that the base manifold $M$ is Kobayashi hyperbolic, $\operatorname{Aut}(X) \cap \mathcal{O}_{d}(X, X)$ is empty for any $d < \frac{1}{\eta}$. We observe that this result is sharp in view of K\"ahler metric $\widetilde{g}$ constructed in Theorem \ref{main1} with its $\widetilde{\chi}$ being identically zero, as $k(s) \equiv 0$ on this example. In the meantime, given any $f_{\diamond} \in Aut(M)$, we may use a global section of $L^{-1} \otimes {f_{\diamond}}^{\ast} L$ to construct a bundle isomorphism of $L$. All these bundle isomorphisms (including the identity map) lie in $\operatorname{Aut}(X) \cap \mathcal{O}_{1}(X, X)$. We refer to Liu \cite[Section 6 in the arXiv version]{Liu2016} and Ni \cite{Ni2021} for recent important results on Liouville theorems for holomorphic mappings under curvature assumptions.

\subsection{Further discussions}

We propose to study the function theory of nonpositively curved K\"ahler manifolds with Euclidean volume growth.

\begin{question}\label{Eucvol}
Let $(X, \widetilde{g})$ be a complete noncompact K\"ahler manifold with $BI \leq 0$. Assume that
$\widetilde{g}$ is of at most Euclidean volume growth in the sense that
\begin{align}
\limsup_{r \rightarrow \infty}\frac{\operatorname{Vol}(B(p, r))}{
r^{2n}} \in (0, +\infty).
\end{align} Here $B(p, r)$ denotes the geodesic ball with radius $r$ centered at some point $p \in X$.

(1) Is $X$ holomorphically convex?

(2) Does $X$ admit nonconstant holomorphic functions of polynomial growth?
\end{question}

\begin{remark}
Examples satisfying the assumption in Question \ref{Eucvol} can be found on total spaces of certain vector bundles as in Theorem \ref{main1} and on the Euclidean space $\mathbb{C}^n$ as shown in Appendix \ref{negUN}. Another interesting example is the smooth affine quadric
\begin{align*}
X_t=\{z_1^2+z_2^2+z_3^2=t\} \subset \mathbb{C}^3, \ \ t \neq 0.
\end{align*}
It is shown in \cite{YZ} that the induced Euclidean metric on $X_t$ satisfies $BI \leq 0$ and Euclidean volume growth. In particular, its bisectional curvatures have lots of zero directions, and the restriction of each coordinate function $z_i (1 \leq i \leq 3)$ on $\mathbb{C}^3$ is of linear growth.
\end{remark}

Siu-Yau \cite{SY} and Mok-Siu-Yau \cite{MSY} studied complete simply-connected noncompact
K\"ahler manifolds with nonpositive sectional curvature. Under a pointwise faster-than-quadratic decay assumption on scalar curvatures, they proved that these manifolds must be holomorphically isometric to the complex Euclidean space. We expect that the following question might be a suitable analogue of the Mok-Siu-Yau result on total space of vector bundles.

\begin{question}\label{decaynon}
Let $k(t)$ be any nonnegative continuous function on $[0, \infty)$ so that
\begin{equation}
    \int_0^{t} \sqrt{k(\tau)}d\tau =o(\ln t),\ \ \ \text{for}\ t \rightarrow \infty.
    \label{kscondi2}
\end{equation}
Does there exist a complete K\"ahler manifold $(X, \widetilde{g})$ satisfying \ref{AssumeA}
and $-k(\widetilde{s}) \leq BI(\widetilde{g}) \leq 0$?
\end{question}

We suspect that the answer to Question \ref{decaynon} is no. Note that (\ref{kscondi2}) is weaker than (\ref{kscondi1}). It might be plausible to develop some three-circle type inequality (as in Proposition \ref{thrcir}) to study function theory on $X$ in Question \ref{decaynon}. However, in view of examples constructed in Theorem \ref{main1}, it is the bisectional curvatures along the horizontal direction that play an essential role here.

\begin{remark}

Following the proof of Proposition \ref{upperdim}, we may derive a similar result for the total space of a vector bundle. Let $M$ be a compact K\"ahler manifold of complex dimension $n-r$ and $E$ be a holomorphic rank $r$ vector bundle over $M$. Assume $E$ is Griffiths negative. We use the same notation as in \ref{AssumeA}. Then the same conclusion as in Proposition \ref{upperdim} holds except that (\ref{dimcmain1}) is replaced by
\begin{equation}
\operatorname{dim} \mathcal{O}_d(X, \widetilde{g}) \leq \sum_{k=0}^{\lfloor \eta d \rfloor} \operatorname{dim} H^0(M, S^k(E^*)).   \label{dimcmain1v}
\end{equation} Here $S^k(E^{\ast})$ is the $k$-th symmetric product vector bundle of the dual bundle $E^{\ast}$. Similarly, we may propose Question \ref{decaynon} for the total spaces of a Griffiths negative bundle.
\end{remark}


The structure of the paper is the following. In Section \ref{sec2} we review Calabi's construction on a complete K\"ahler metric on the total space of a vector bundle. In Section \ref{sec3} we prove Theorem \ref{main1} and Corollary \ref{dimcountintro}. In Sections \ref{sec4} and \ref{sec5} we study polynomial growth holomorphic functions on (and mappings between) total spaces of certain line bundles, respectively. Propositions \ref{upperdim}, \ref{lowerdim}, and \ref{xtoy} are proved. In Appendix \ref{negUN} we give a systematic study of rotationally symmetric K\"ahler metrics on the complex Euclidean space. The results obtained in Appendix \ref{negUN} align with those in Theorem \ref{main1}. Finally, we exhibit examples of negatively curved metrics on complex planes without nonconstant polynomial growth holomorphic functions in Appendix \ref{noholoslow}, which complete the construction of Example \ref{noholointro}.

\vskip 0.2cm
\noindent\textbf{Acknowledgments.} Bo Yang would like to thank Fangyang Zheng for his generous help on applications of Theorem \ref{main1} and some results in Appendix \ref{negUN} in this paper. Bo Yang is also grateful to Xueyuan Wan for his interests and for bringing References \cite{KWZ2025} and \cite{Wan2024} to our attention.

\section{K\"ahler geometry on total spaces of vector bundles}\label{sec2}

Let $\pi: E \rightarrow M$ be a holomorphic vector bundle on a complex manifold $M$, where $(M, g)$ is K\"ahler and $h$ is a Hermitian metric on $E$. We call $X$ the total space of $E \rightarrow M$. We assume $\operatorname{dim}_{\mathbb{C}} M=n-r$ and $rank(E)=r$.

Let $z=(z_1, \cdots, z_{n-r})$ be a holomorphic chart on $U_{\alpha} \subset M$, and $\varphi: \pi^{-1}(U_{\alpha}) \rightarrow U \times \mathbb{C}$ be a local trivialization, and $e_{\mu}=\varphi^{-1}(z, \delta_{\mu})$ be a local basis of sections of $L$. Here $1 \leq \mu \leq r$ and $\delta_{\mu}$ represents the standard basis in $\mathbb{C}^r$ whose only nonzero entry is $1$ in the ${\mu}$-th position.

In Sections \ref{sec2} and \ref{sec3}, we use \textbf{the following convention}: the indices $i, j, k, l, p, q$ represent components from the base direction, and $\alpha, \beta, \gamma, \delta, \mu, \nu, \xi, \eta$ components from the fiber direction.

Recall the connection form and curvature form of $(E, h)$ is
\begin{align*}
& \theta_{\alpha}^{\beta}=\partial h_{\alpha \overline{\mu}} h^{\beta \overline{\mu}}, \ \  \ \Gamma_{\alpha i}^{\beta}=\frac{\partial h_{\alpha \overline{\mu}}}{\partial z_i} h^{\beta \overline{\mu}};\\
& \Theta_{\alpha}^{\beta}(\frac{\partial}{\partial z_i}, \frac{\partial}{\partial \overline{z_j}})=-\frac{\partial}{\partial \overline{z_j}} \Gamma_{\alpha i}^{\beta},\ \ \ R^{E}_{i \overline{j} \alpha \overline{\beta} }=\Theta_{\alpha}^{\gamma}(\frac{\partial}{\partial z_i}, \frac{\partial}{\partial \overline{z_j}}) h_{\gamma \overline{\beta}}.
\end{align*}

There is a natural complex structure on $X$ induced from $E \rightarrow M$. For any point $(p, v^{\mu} e_{\mu}) \in X$, we may write $T^{1, 0}X=\operatorname{span}\{\frac{\partial}{\partial z_i}, \frac{\partial}{\partial v^{\mu}}\}$ where $1 \leq i \leq n-r$ and $1 \leq {\mu} \leq r$.

Calabi (\cite{Calabi1}) considered the K\"ahler metric $\widetilde{g}$ on $X$ in the form of
\begin{align}
\widetilde{\omega}=\pi^{\ast}\omega_g+\sqrt{-1}\partial\overline{\partial} u.  \label{Cmetric}
\end{align}
Here for any point $(p, v^{\mu} e_{\mu}) \in X$, $u=u(t)$ is a suitable function of $t$ which is to be determined. Here $t=h(v^{\mu} e_{\mu}, \overline{v^{\mu} e_{\mu}})$. It was pointed out in {\cite[p.272]{Calabi1}} that each fiber is totally geodesic with respect to a K\"ahler metric in the form of (\ref{Cmetric}), see also \cite[Proposition 2.2 on p.2297]{HS2002}.

A natural question is whether the total space of $E \rightarrow M$ admits a complete K\"ahler metric. Making use of (\ref{Cmetric}), Calabi proved the following

\begin{proposition}[{\cite[Theorem 3.2 on p.275]{Calabi1}}]
Let $X$ be the total space of $E$ which is a holomorphic vector bundle over a complete K\"ahler manifold $(M, g)$. Assume that $E$ admits a Hermitian metric
$h$ whose Chern curvature is bounded from above, i.e. there exists some constant $C$ so that
\begin{align}
R^E(Z, \overline{W}, v^{\alpha} e_{\alpha}, \overline{w^{\beta} e_{\beta}}) \leq  C g(Z, \overline{W}) h(v^{\alpha} e_{\alpha}, \overline{w^{\beta} e_{\beta}}),  \label{ubound}
\end{align}
for any $p \in M$, $Z, W \in T_{p}^{1, 0}M$, and $v^{\alpha} e_{\alpha}, w^{\beta} e_{\beta} \in \pi^{-1}(p)$.
Then $X$ admits a complete K\"ahler metric in the form of (\ref{Cmetric}).
\end{proposition}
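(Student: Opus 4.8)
The plan is to produce an explicit radial profile $u=u(t)$ for which the form $\widetilde{\omega}$ in (\ref{Cmetric}) is both positive definite and complete; the upper curvature bound (\ref{ubound}) will enter exactly to guarantee positivity in the horizontal directions.

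First I would rewrite $\widetilde{\omega}=\pi^{\ast}\omega_g+\sqrt{-1}\partial\overline{\partial}u$ in the Chern-adapted coframe $\{dz_i,\,Dv^{\mu}\}$, where $Dv^{\mu}=dv^{\mu}+\Gamma^{\mu}_{\alpha i}v^{\alpha}dz_i$. A short computation using $\partial_i h_{\alpha\overline{\beta}}=\Gamma^{\gamma}_{\alpha i}h_{\gamma\overline{\beta}}$ gives $\partial t=h_{\mu\overline{\beta}}\overline{v^{\beta}}\,Dv^{\mu}$, and together with the identity $R^{E}_{i\overline{j}\alpha\overline{\beta}}=-\partial_i\partial_{\overline{j}}h_{\alpha\overline{\beta}}+h_{\gamma\overline{\delta}}\Gamma^{\gamma}_{\alpha i}\overline{\Gamma^{\delta}_{\beta j}}$ one obtains the block-diagonal expression
\[
\widetilde{\omega}=\sqrt{-1}\Big(g_{i\overline{j}}-u'\,R^{E}_{i\overline{j}\alpha\overline{\beta}}v^{\alpha}\overline{v^{\beta}}\Big)\,dz_i\wedge d\overline{z_j}+\sqrt{-1}\Big(u'\,h_{\mu\overline{\nu}}+u''\,\xi_{\mu}\overline{\xi_{\nu}}\Big)\,Dv^{\mu}\wedge\overline{Dv^{\nu}},
\]
with $\xi_{\mu}=h_{\mu\overline{\beta}}\overline{v^{\beta}}$. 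The virtue of this coframe is that the mixed horizontal-vertical terms cancel, so that $\widetilde{g}>0$ is equivalent to the positivity of the two Hermitian blocks separately.

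I would then analyze positivity through the single profile $\psi(t):=t\,u'(t)$. Since $\|\xi\|_h^2=h^{\mu\overline{\nu}}\xi_{\mu}\overline{\xi_{\nu}}=t$, the eigenvalues of the vertical block relative to $h$ are $u'$ (with multiplicity $r-1$) and $u'+t\,u''=\psi'$; hence the vertical block is positive definite iff $\psi>0$ and $\psi'>0$ on $(0,\infty)$ together with $u'(0)=\psi'(0)>0$ at the zero section. For the horizontal block, the upper bound (\ref{ubound}) reads $R^{E}_{i\overline{j}\alpha\overline{\beta}}v^{\alpha}\overline{v^{\beta}}\leq C\,t\,g_{i\overline{j}}$ as Hermitian forms, so $g_{i\overline{j}}-u'R^{E}_{i\overline{j}\alpha\overline{\beta}}v^{\alpha}\overline{v^{\beta}}\geq(1-C\psi)\,g_{i\overline{j}}$, which is positive (indeed bounded below by a fixed multiple of $\pi^{\ast}g$) as soon as $\sup_t C\psi(t)<1$. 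When $C\leq0$ this is automatic and one may simply take $u$ linear.

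Finally, I would check completeness. Along a fiber the radial $\widetilde{g}$-length to infinity equals a constant multiple of $\int^{\infty}\sqrt{\psi'(t)/t}\,dt$, so each fiber is complete precisely when this integral diverges; the horizontal estimate $\widetilde{g}\geq(1-C\sup\psi)\,\pi^{\ast}g$ together with the completeness of $(M,g)$ then upgrades this to completeness of $(X,\widetilde{g})$ by a standard argument separating curves whose projection to $M$ diverges from those that remain in a compact set. The main obstacle is the tension, in the case $C>0$, between positivity and completeness: positivity caps $\psi=t\,u'$ below the constant $1/C$, while naively completeness wants $\psi$ (or $\psi'$) large. This is resolved by choosing a slowly increasing bounded profile, e.g. $\psi(t)\sim \ell-1/\ln t$ with $\ell<1/C$ for large $t$, for which $\psi'\sim 1/(t(\ln t)^2)$ and $\int^{\infty}\sqrt{\psi'/t}\,dt=\int^{\infty}\frac{dt}{t\ln t}=\infty$; smoothing $\psi$ near $t=0$ so that $\psi(0)=0$, $\psi'(0)>0$, and $\psi'>0$ throughout gives the desired $u$, and hence a complete K\"ahler metric of the form (\ref{Cmetric}).
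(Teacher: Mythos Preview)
The paper does not supply its own proof of this proposition: it is stated as a quotation of Calabi's original result \cite[Theorem~3.2]{Calabi1}, and the text moves on immediately to the remark about the horizontal distribution and the explicit formula (\ref{Cmetric2}). So there is nothing in the paper to compare your argument against line by line.

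That said, your proposal is a correct and self-contained proof, and it is in the same spirit as Calabi's original construction. The block-diagonal expression you derive is exactly the paper's formula (\ref{Cmetric2}); your reduction of positivity to the three scalar conditions $\psi>0$, $\psi'>0$, and $C\psi<1$ (with $\psi=t\,u'$) is the natural one; and your resolution of the ``tension'' in the case $C>0$ via a bounded, slowly increasing profile such as $\psi(t)\sim\ell-1/\ln t$ is precisely the kind of choice Calabi makes. One small point you might make explicit: in the completeness argument for curves whose base projection stays in a compact set, the inequality $|\gamma'|_{\widetilde g}\geq\tfrac12\sqrt{\psi'/t}\,|dt/ds|$ (obtained by projecting the vertical velocity onto the radial direction in the fiber) is what converts divergence of $t$ along the curve into divergence of the $\widetilde g$-length; this is the content hidden in your phrase ``standard argument,'' and it is worth one line since the smallest vertical eigenvalue $u'=\psi/t$ does tend to zero.
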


\begin{remark}
All the K\"ahler metrics in the form of (\ref{Cmetric}) admits a standard horizontal distribution. Namely the horizontal distribution is determined by the Chern connection of the vector bundle $(E, h)$. This point will be clear in (\ref{Cmetric2})
after we express (\ref{Cmetric}) in terms of local coordinates.
\end{remark}

Following \cite{Calabi1}, we set
\[
\nabla_{z_i}=\frac{\partial}{\partial z_i}-v^{\mu}\Gamma_{\mu i}^{\delta} \frac{\partial}{\partial v^{\delta}}, \ \ \ \nabla {v^{\mu}}=v^{\nu}\Gamma_{\nu i}^{\mu} dz^i+ d v^{\mu}.
\]
Next we choose a local $(1, 0)$ frame on $X$
\[
(\nabla_{z_1}, \cdots, \nabla_{z_{n-r}}, \frac{\partial}{\partial v^{1}}, \cdots, \frac{\partial}{\partial v^{r}}),
\]
and its coframe
\[
(dz^1, \cdots, dz^{n-r}, \nabla v^1, \cdots, \nabla v^r).
\]

Then (\ref{Cmetric}) can be rewritten (see \cite[p.274]{Calabi1})
\begin{align}
\widetilde{\omega}=\sqrt{-1}\Big[\Big(g_{i\bar{j}}-u'(t)R^E_{i \bar{j} \xi \bar{\nu}}v^{\xi}\overline{v^{\nu}}\Big) dz^i \wedge d\overline{z^j}+\Big(u^{\prime\prime}(t) h_{\mu\bar{\xi}} \overline{v^{\xi}} h_{\eta \bar{\nu}}v^{\eta}+u^{\prime}(t) h_{\mu \bar{\nu}}\Big)\nabla v^{\mu} \wedge \nabla \overline{v^{\nu}}\Big].  \label{Cmetric2}
\end{align}

\section{Negatively curved K\"ahler metrics on total spaces}\label{sec3}

In this section we prove Theorem \ref{main1}. In general, the curvature tensor of $\widetilde{\omega}$ depends on the background K\"ahler metric $g$ and the Hermitian metric $h$ on the fiber. The natural idea is to consider some special choices of Hermitian fiber structures $h$. Here we introduce two classes of line bundles and vector bundles respectively.

\begin{itemize}
   \item  \namedlabel{AssumeC}{\textbf{Assumption (C)}}: Let $(M, g)$ be a compact K\"ahler manifold with $BI(M, g)<0$, and there is a holomorphic line bundle $L \rightarrow M$ with a Hermitian metric $h$ whose curvature form is $-\lambda g$ for a constant $\lambda>0$.

    \item  \namedlabel{AssumeD}{\textbf{Assumption (D)}}: Let $(M, g)$ be a compact K\"ahler manifold with $BI(M, g)<0$, and there exists a holomorphic vector bundle $E$ of rank $r$ which is decomposable in the sense that $E=L^{\oplus r}$ for some $r>1$ where $L$ satisfies \ref{AssumeC}.
\end{itemize}

The general philosophy is that the induced metric along each fiber of $\widetilde{\omega}$ in the form of (\ref{Cmetric2}) is rotationally symmetric. Conversely, if we take the potential function which produces a $U(r)$-invariant K\"ahler metric on $\mathbb{C}^r$ with $BI<0$ as our choice of $u(t)$ in (\ref{Cmetric}), then it is possible to get a K\"ahler metric with $BI<0$ on the total space $X$. To motivate our discussion, we prove Theorem \ref{main1} in the line bundle case and the vector bundle case separately.

Fixing any point $p \in M$, we consider $(p, v^{\alpha}e_{\alpha})$ in $\pi^{-1}(p)$ and let $t=h(v^{\alpha}e_{\alpha}, \overline{v^{\alpha}e_{\alpha}})$. Without loss of generality, we may assume that at $p$, $g_{i\bar{j}}$ is diagonal with respect to $\{\frac{\partial}{\partial z_i}\}$ where $1 \leq i \leq n-r$, and so is $h_{\alpha\bar{\beta}}$ with respect to $\{e_{1}, \cdots, e_{r}\}$. Moreover, $dh_{\alpha \overline{\beta}}(p)=0$.

\begin{lemma}[curvature in the line bundle case] \label{lcurvature}
Let $L$ be a line bundle satisfying \ref{AssumeC}. We consider a local $(1, 0)$ frame
$\{\frac{\partial}{\partial z_1},\cdots, \frac{\partial}{\partial z_{n-1}}, \frac{\partial}{\partial v}\}$.
Then the local components of $\widetilde{g}$ is
\begin{align*}
\widetilde{g}_{i\bar{j}}&=Pg_{i\bar{j}}+Q t \frac{\partial \ln h}{\partial z_i} \frac{\partial \ln h}{\partial \overline{z_j}},\ \ 1 \leq i, j \leq n-1;\\
\widetilde{g}_{i\bar{v}}&=Q h v \frac{\partial \ln h}{\partial z_i}, \ \ \ \widetilde{g}_{v\bar{j}}=Q h \bar{v} \frac{\partial \ln h}{\partial \overline{z_j}},\ \ \widetilde{g}_{v\bar{v}}=Q h.
\end{align*} Here we set
\begin{align}
P=1+\lambda u'(t) t, \ \ \ Q=tu^{\prime\prime}(t)+u^{\prime}(t).  \label{MNdef}
\end{align}
Moreover, we solve the nonzero curvature components of $(X, \widetilde{g})$ at $p$:
\begin{align}
R_{i \bar{j}k \bar{l}}&=P R^M_{i\bar{j}k\bar{l}}-tQ\lambda^2 (g_{i\bar{j}}g_{k\bar{l}}+g_{i\bar{l}}g_{k\bar{j}}), \label{lC}\ \ \\
R_{v \bar{v}i \bar{j}}&=\Big(-tP_{tt}-P_t+\frac{(P_t)^2}{P}t\Big)hg_{i\bar{j}},   \ \ (1\leq i, j\leq n-1) \label{lB}\\
R_{v \bar{v}v \bar{v}}&=\Big(-tQ_{tt}+\frac{(Q_t)^2}{Q}t-Q_t\Big)h^2. \label{lA}
\end{align} For simplicity, $R^M_{i\bar{j}k\bar{l}}$ in (\ref{lC}) standard for the curvature component of $(M, g)$.
\end{lemma}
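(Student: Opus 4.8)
The plan is to obtain the metric components by a change of frame, and then the curvature by the standard K\"ahler formula evaluated at a conveniently normalized point. For the components, I would begin from Calabi's expression (\ref{Cmetric2}) for $\widetilde{\omega}$ in the frame $\{\nabla_{z_i},\frac{\partial}{\partial v}\}$. In the line bundle case the single fiber-curvature quantity is read off from \ref{AssumeC}: since the curvature form of $(L,h)$ is $-\lambda\omega_g$, one has $R^E_{i\bar j\xi\bar\nu}v^\xi\overline{v^\nu}=-\lambda t\,g_{i\bar j}$, so the $dz^i\wedge d\overline{z^j}$ coefficient in (\ref{Cmetric2}) is exactly $Pg_{i\bar j}$, the $\nabla v\wedge\nabla\overline{v}$ coefficient is $Qh$, and the cross coefficient is $0$. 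Passing to the coordinate frame via $\frac{\partial}{\partial z_i}=\nabla_{z_i}+v(\partial_i\ln h)\frac{\partial}{\partial v}$ and expanding $\widetilde{g}(\frac{\partial}{\partial z_i},\overline{\frac{\partial}{\partial z_j}})$, etc., then yields the four stated components, the extra term $tQ\,\partial_i\ln h\,\partial_{\bar j}\ln h$ arising from the vertical block $Qh$.

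For the curvature I would compute at a point $(p,v)$, choosing normal holomorphic coordinates for $g$ at $p$ together with a normal holomorphic frame for $(L,h)$. This upgrades the given $dh(p)=0$ to $\partial_i\ln h(p)=0$ and $\partial_i\partial_j\ln h(p)=0$, while the curvature assumption records $\partial_i\partial_{\bar j}\ln h(p)=\lambda g_{i\bar j}$; for $g$ one has $dg_{i\bar j}(p)=0$ and $\partial_i\partial_{\bar j}g_{k\bar l}(p)=-R^M_{i\bar j k\bar l}$. I would then record the identities $\partial_v t=h\overline{v}$, $\partial_{z_i}t(p)=0$, and $\partial_i\partial_{\bar j}t(p)=\lambda t\,g_{i\bar j}$. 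The decisive simplification is that at $(p,v)$ every off-diagonal component $\widetilde{g}_{i\bar v}$ vanishes (each carries a factor $\partial_i\ln h$), so the metric is diagonal there and its inverse is simply $\widetilde{g}^{i\bar j}=P^{-1}g^{i\bar j}$, $\widetilde{g}^{v\bar v}=(Qh)^{-1}$.

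With $R_{a\bar b c\bar d}=-\partial_a\partial_{\bar b}\widetilde{g}_{c\bar d}+\widetilde{g}^{m\bar n}(\partial_a\widetilde{g}_{c\bar n})(\partial_{\bar b}\widetilde{g}_{m\bar d})$, a short check shows the only surviving first derivatives at $(p,v)$ are the vertical ones $\partial_v\widetilde{g}_{i\bar j}=P_t h\overline{v}\,g_{i\bar j}$ and $\partial_v\widetilde{g}_{v\bar v}=Q_t h^2\overline{v}$ together with their conjugates. Feeding these into the formula gives the vertical components (\ref{lB}) and (\ref{lA}) directly, the $\Gamma\Gamma$ term producing the $\frac{(P_t)^2}{P}t$ and $\frac{(Q_t)^2}{Q}t$ summands. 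For (\ref{lC}) the $\Gamma\Gamma$ term vanishes (every holomorphic $z$-derivative $\partial_i\widetilde{g}_{k\bar n}$ is zero at $(p,v)$), so $R_{i\bar j k\bar l}=-\partial_i\partial_{\bar j}\widetilde{g}_{k\bar l}$, which splits into $PR^M_{i\bar j k\bar l}-P_t\lambda t\,g_{i\bar j}g_{k\bar l}$ from the $Pg_{k\bar l}$ part and $-tQ\lambda^2 g_{i\bar l}g_{k\bar j}$ from the $tQ\,\partial_k\ln h\,\partial_{\bar l}\ln h$ part, where the two mixed derivatives $\partial_{\bar j}\partial_k\ln h$ and $\partial_i\partial_{\bar l}\ln h$ each contribute a factor $\lambda g$. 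The crucial algebraic identity, where \ref{AssumeC} really enters, is $P_t=\lambda Q$ (immediate from $P=1+\lambda u't$ and $Q=tu''+u'$): it turns $-P_t\lambda t\,g_{i\bar j}g_{k\bar l}$ into $-\lambda^2 tQ\,g_{i\bar j}g_{k\bar l}$ and so combines the two horizontal terms into the symmetric form $-tQ\lambda^2(g_{i\bar j}g_{k\bar l}+g_{i\bar l}g_{k\bar j})$.

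Finally, I would dispatch all remaining components by the $U(1)$-symmetry $v\mapsto e^{\sqrt{-1}\theta}v$ preserving $\widetilde{g}$: an invariant curvature component must carry equally many $v$- and $\overline{v}$-indices, leaving exactly (\ref{lC})--(\ref{lA}) (up to the K\"ahler symmetries of $R$) as the possibly nonzero ones. The main obstacle is not a single conceptual step but the disciplined bookkeeping of which among the many derivatives of $P$, $Q$, $t$, $g$ and $\ln h$ survive at $(p,v)$; the choice of normalization that makes $\widetilde{g}$ diagonal at $(p,v)$ and kills the pure holomorphic derivatives of $\ln h$ is precisely what keeps the three target computations short.
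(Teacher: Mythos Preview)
The paper states Lemma~\ref{lcurvature} without proof, treating it as a routine (if lengthy) computation in the setup of Section~\ref{sec2}; your proposal supplies exactly such a computation and is correct. Your two organizing choices---working first in the block-diagonal frame $\{\nabla_{z_i},\partial_v\}$ from (\ref{Cmetric2}) to read off the metric, and then strengthening the paper's normalization $dh(p)=0$ to a full normal holomorphic frame for $(L,h)$ together with normal coordinates for $g$---are legitimate and are what make the three curvature computations short: they kill all first $z$-derivatives of $\widetilde g$ at $(p,v)$, diagonalize $\widetilde g$ there, and reduce $\partial_i\partial_{\bar j}\ln h$ to $\lambda g_{i\bar j}$. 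The identity $P_t=\lambda Q$ you single out is indeed the reason the two horizontal pieces combine into the symmetric form in (\ref{lC}), and your $U(1)$ argument for the vanishing of the mixed components is valid (note that $R_{i\bar v v\bar j}=R_{v\bar v i\bar j}$ by the K\"ahler symmetry, so nothing is missed).
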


\begin{theorem}\label{lexistence}
Let $L$ be a line bundle satisfying \ref{AssumeC}. Then there is a natural correspondence between a smooth complete K\"ahler metric $\widetilde{g}$ in the form (\ref{Cmetric}) with $BI<0$ ($BI \leq 0$) on $X$ and a real-valued function $\chi(t) \in C^{\infty}[0, +\infty)$
satisfying $\chi(0)=0$ and $\chi^{\prime}(t)<0$ ($\chi^{\prime}(t) \leq 0$) for any $t \in [0, \infty)$. By the natural correspondence, we mean that it is bijective modulo a scaling.
\end{theorem}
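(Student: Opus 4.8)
The plan is to reduce the claimed bijection to a single monotonicity condition on the Calabi profile, using the curvature formulas of Lemma \ref{lcurvature}. Everything is determined by $u$; setting $P=1+\lambda u't$ and $Q=(tu')'$ as in (\ref{MNdef}), the metric is smooth and positive definite exactly when $P,Q>0$, and from the relation $P_t=\lambda Q$ one recovers $P$ from $Q$ by $P=1+\lambda\int_0^t Q$. First I would translate $BI<0$ into pointwise conditions on $Q$. At the normalized point $p$ the metric is block diagonal and, by Lemma \ref{lcurvature}, the only nonvanishing curvature components are (\ref{lC}), (\ref{lB}), (\ref{lA}); in particular all mixed-index components vanish. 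Hence for $W_1=a+c\,\partial_v$, $W_2=b+d\,\partial_v$ the bisectional curvature splits as
\[
R(W_1,\overline{W_1},W_2,\overline{W_2})=R^{\mathrm{base}}(a,\overline a,b,\overline b)+A\,h\big(|c|^2 g(b,\overline b)+|d|^2 g(a,\overline a)\big)+B\,h^2|c|^2|d|^2,
\]
where $R^{\mathrm{base}}=P R^M(a,\bar a,b,\bar b)-tQ\lambda^2\big(g(a,\bar a)g(b,\bar b)+|g(a,\bar b)|^2\big)$ is the contraction of (\ref{lC}), and $A$, $B$ denote the brackets in (\ref{lB}) and (\ref{lA}). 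Since $BI(M,g)<0$ (Assumption \ref{AssumeC}) and the extra term in $R^{\mathrm{base}}$ is nonpositive, $R^{\mathrm{base}}<0$ automatically when $a,b\neq0$. Testing pure fiber directions ($a=b=0$) forces $B<0$, and a fiber-against-base pair ($a=0$, $d=0$) forces $A<0$; conversely $A<0$ and $B<0$ make every term nonpositive with strict negativity in each nontrivial case. Thus $BI<0\iff P,Q>0,\ A<0,\ B<0$, and $BI\le0$ is the analogous non-strict statement.

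The main obstacle, and the crux, is that this looks like two independent conditions ($A$ governs the mixed horizontal/fiber curvature, $B$ the intrinsic fiber curvature), while the theorem encodes $BI<0$ in a single function. The key lemma I would prove is that $B\le0$ forces $A\le0$, with strictness propagating. Using $P_t=\lambda Q$, both brackets take the same shape: $A=-P\,(t(\ln P)_t)_t$ and $B=-Q\,(t(\ln Q)_t)_t$. Passing to $s=\ln t$ and writing $m(s)=\ln Q$, $n(s)=s+m(s)$ (so $tQ=e^{n}$ and $P=1+\lambda\int_{-\infty}^{s}e^{n(s')}\,ds'$), a direct computation gives the Riccati identity $\Phi'=\Phi(n'-\Phi)$ for $\Phi:=tP_t/P=\lambda e^{n}/P$, together with $A<0\iff\Phi'>0\iff\Phi<n'$ and $B<0\iff m''>0$. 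Now $\Phi\to0$ and $n'=1+m'\ge1$ as $s\to-\infty$, so $D:=n'-\Phi>0$ initially; at a first zero $s_0$ of $D$ one has $\Phi(s_0)=n'(s_0)$, hence $\Phi'(s_0)=0$ and $D'(s_0)=n''(s_0)=m''(s_0)$. Strict convexity $m''>0$ (i.e.\ $B<0$) forces $D'(s_0)>0$, contradicting a down-crossing; so $D>0$, i.e.\ $A<0$, everywhere, and the non-strict version is identical. This reduces $BI<0$ to the single condition $B<0$.

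With the reduction in hand I would define the monotone quantity
\[
\chi(t):=-\,t\,(\ln Q)_t=-\,\frac{t\,Q'(t)}{Q(t)},
\]
which satisfies $\chi(0)=0$ and $\chi'(t)=B/Q$, so $\chi'$ has exactly the sign of the fiber holomorphic sectional curvature $R_{v\bar v v\bar v}=Bh^2$; in particular $\chi'<0\iff B<0\iff BI<0$ and $\chi'\le0\iff BI\le0$, with $\chi\equiv0\iff Q$ constant $\iff u$ linear (matching the equality discussion). For the correspondence itself: from a metric one reads off $Q$, hence $\chi\in C^\infty[0,\infty)$ with $\chi(0)=0$; conversely, given $\chi$ I recover $Q$ by integrating $(\ln Q)_t=-\chi/t$, namely $Q(t)=Q(0)\exp\!\big(-\!\int_0^t\chi(\tau)/\tau\,d\tau\big)$ — the integral converges precisely because $\chi(0)=0$ — and then $u$ from $tu'(t)=\int_0^t Q$. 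The sole freedom is the constant $Q(0)=u'(0)>0$, i.e.\ the fiber rescaling $Q\mapsto cQ$; quotienting by it makes the correspondence bijective.

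Finally I would dispatch the two remaining regularity points, which I expect to be routine. Smoothness of $\widetilde g$ across the zero section holds because $t=h(v,\overline v)$ is a smooth function on $X$ vanishing on $M$ and $u\in C^\infty[0,\infty)$ (equivalently $Q\in C^\infty[0,\infty)$, equivalently $\chi\in C^\infty[0,\infty)$), while $P(0)=1>0$ and $Q(0)>0$ give positivity along $M$. Completeness is automatic from the sign of $\chi$: $\chi'\le0$ and $\chi(0)=0$ give $\chi\le0$, hence $Q$ is non-decreasing and $Q\ge Q(0)>0$; the fiber metric coefficient $\widetilde g_{v\bar v}=Qh$ is then bounded below by a complete Euclidean-type fiber metric, so each fiber, and therefore $X$ over the compact base $M$, is complete. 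Assembling these steps yields the claimed bijection, modulo the scaling $Q\mapsto cQ$.
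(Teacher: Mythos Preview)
Your overall architecture is the paper's: encode the metric by $u$ (equivalently $Q$), reduce $BI<0$ to the two sign conditions on the brackets of (\ref{lB}) and (\ref{lA}), prove the fiber condition implies the mixed one, and set up the bijection via $\chi(t)=-tQ_t/Q$ with the only freedom being $Q(0)$. One correction to your curvature expansion: the claim that ``all mixed-index components vanish'' is false for the pattern $R_{v\bar j k\bar v}$, which by K\"ahler symmetry equals $R_{v\bar v k\bar j}$ and hence contributes. The middle term therefore completes the square to $A\,h\,g(d\,a+c\,b,\overline{d\,a+c\,b})$, as in the paper's formula (\ref{anyBI}), rather than your $A\,h\,\bigl(|c|^2 g(b,\bar b)+|d|^2 g(a,\bar a)\bigr)$. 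This does not damage the conclusion $BI<0\iff A<0$ and $B<0$, since the corrected term is still a nonnegative multiple of $A$, but your displayed identity is wrong as written.

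Where you genuinely diverge is the implication $B<0\Rightarrow A<0$. The paper does this by an explicit monotonicity: rewriting the mixed bracket as $II=\tfrac{\lambda(1-\chi)}{P}\bigl(\tfrac{\lambda tQ}{1-\chi}-P\bigr)$ and computing $\tfrac{d}{dt}\bigl(\tfrac{\lambda tQ}{1-\chi}-P\bigr)=\lambda tQ\,\chi'/(1-\chi)^2$, so the inner factor decreases from $-1$ at $t=0$ and stays negative. Your route---passing to $s=\ln t$, recognising $\Phi=tP_t/P$ as the solution of the logistic equation $\Phi'=\Phi(n'-\Phi)$, and running a first-zero barrier argument on $D=n'-\Phi$---is a valid and more structural alternative: it explains \emph{why} the mixed bracket is controlled by the fiber one, via an ODE with carrying capacity $n'=1-\chi$. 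The trade-off is that the paper's computation handles the non-strict case $\chi'\le 0$ immediately (the derivative is $\le 0$, done), whereas your first-zero argument needs strict convexity $m''>0$ at the putative crossing; for $BI\le 0$ you should either pass to the limit from the strict case or simply quote the paper's one-line monotonicity.
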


\begin{proof}[Proof of Theorem \ref{lexistence}]

From Lemma \ref{lcurvature}, we see that any smooth complete K\"ahler metric in the form (\ref{Cmetric}) on $X$ is characterized by a single variable function $u(t)$ defined on $[0, \infty)$ such that
\begin{enumerate}
\item   $u(t) \in C^{\infty}[0, \infty)$ where we use one-sided derivatives at $t=0$.
\item   $tu'(t)$ is strictly increasing on $[0, \infty)$ and $u'(0)>0$.
\item   $\int_{0}^{\infty} \sqrt{\dfrac{tu^{\prime\prime}(t)+u^{\prime}(t)}{t}}dt=\infty$.
\end{enumerate}

Note that the condition (iii) ensures the completeness of the metric as each fiber of $L \rightarrow M$ is totally geodesic.

Next we study the bisectional curvature of $\widetilde{g}$. Consider
\begin{align*}
\widetilde{U}=\sum_{i=1}^{n-1} u^i {\frac{\partial}{\partial z^i}}+\varepsilon  \frac{\partial}{\partial v}, \  \ U=d\pi (\widetilde{U});\ \ \ \
\widetilde{W}=\sum_{i=1}^{n-1} w^i {\frac{\partial}{\partial z^i}}+\mu  \frac{\partial}{\partial v}, \  \ W=d\pi (\widetilde{W}).
\end{align*}
Using (\ref{lC}), (\ref{lB}), and (\ref{lA}), we have
\begin{align}
&R(\widetilde{U}, \overline{\widetilde{U}}, \widetilde{W}, \overline{\widetilde{W}})  \label{anyBI}\\
=& P R^{M}(U, \overline{U}, W, \overline{W})-tQ \lambda^2 [g(U, \overline{U})g(W, \overline{W})+g(U, \overline{W})g(W, \overline{U})]  \nonumber\\
& + II \, Q h \, g(\mu U+\varepsilon V, \overline{\mu U+\varepsilon V})+|\varepsilon|^2 |\mu|^2 \, I\, Q^2h^2,   \nonumber
\end{align}
where we set
\begin{align}
I=\frac{1}{Q^2}(-tQ_{tt}+\frac{(Q_t)^2}{Q}t-Q_t),\ \ \ II=\frac{1}{Q}(-tP_{tt}-P_t+\frac{(P_t)^2}{P}t).  \label{IIIdef}
\end{align}
Therefore we conclude that $\widetilde{g}$ satisfies $BI<0$ if and only if both $I$ and $II$ are negative for all $t \in [0, \infty)$.

We introduce a function $\chi: [0, +\infty) \rightarrow \mathbb{R}$ by
\begin{align}
\chi(t)=-\frac{t Q_t}{Q}.   \label{chidef}
\end{align}
Next we observe that
\[
I=\frac{\chi^{\prime}(t)}{Q}, \ \ \ II=\frac{\lambda (1-\chi)}{P}\Big(\frac{\lambda t Q}{1-\chi}-P\Big).
\]
Motivated by \cite[Theorem 2 on p.525]{WZ} (see also Proposition \ref{propA2}), we will show that any strictly decreasing function $\chi \in C^{\infty}[0, \infty)$ with $\chi(0)=0$ and $\chi^{\prime}(0)<0$ corresponds to a complete K\"ahler metric in the form (\ref{Cmetric}) with $BI<0$. Once we have such a $\chi(t)$ and $Q(0)>0$ prescribed, we may solve
\begin{align}
Q(t)=Q(0)\exp(-\int_0^t \frac{\chi(\tau)}{\tau}d\tau), \ \ u^{\prime}(t)=\frac{1}{t} \int_0^t Q(\tau)d\tau, \ \ u(t)=\int_0^t u^{\prime}(\tau)d\tau.  \label{Ntaueqn}
\end{align}
It is straightforward to check $u(t)$ can be used to produce a smooth complete K\"ahler metric, i.e. it satisfies the conditions (i), (ii), and (iii) in the above. It only remains to check
that $II$ defined in (\ref{IIIdef}) (hence $\frac{\lambda t Q}{1-\chi}-P$) is negative on $[0, \infty)$. To that end, it suffices to note
\[
\frac{d}{dt}\Big(\frac{\lambda t Q}{1-\chi}-P\Big)=\lambda t Q\frac{\chi^{\prime}(t)}{(1-\chi)^2} <0,\ \ \ \ \  (\frac{\lambda t Q}{1-\chi}-P\Big)\Big|_{t=0}=-1.
\]
\end{proof}

\begin{corollary}\label{bigeuclidean}
Given any complete K\"ahler metric $\widetilde{g}$ with $BI \leq 0$ from the correspondence stated in Theorem \ref{lexistence}, there exists some constant $C_1>0$ so that
\begin{align}
\operatorname{Vol}(B(p, s)) \geq C_1 s^{2n}, \ \ \forall s \geq 0.  \label{bigger}
\end{align} Here $p$ is a point on the zero section $M \subset X$, and $B(p, s)$ the geodesic ball with respect to $\widetilde{g}$.
\end{corollary}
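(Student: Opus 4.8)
The plan is to exploit the fibration structure of $(X,\widetilde{g})$ together with the explicit form of the metric from Lemma \ref{lcurvature}, reducing the volume estimate to a one-variable integral along the fibers. In the adapted coframe $(dz^1,\dots,dz^{n-1},\nabla v)$ the metric is block diagonal, $\widetilde{\omega}=P\,\pi^{\ast}\omega_g+\sqrt{-1}\,Qh\,\nabla v\wedge\nabla\overline{v}$, with $P=1+\lambda u'(t)t$ and $Q=tu''(t)+u'(t)$ as in (\ref{MNdef}). Two structural facts drive everything: first, $P(0)=1$ and $P'(t)=\lambda Q(t)$; second, the hypothesis $BI\le 0$ forces $\chi(t)=-tQ_t/Q\le 0$ on $[0,\infty)$, which by (\ref{chidef}) means $Q$ is nondecreasing (and $Q\ge Q(0)>0$). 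I would record at the outset that the zero section carries the induced metric $g$ (since $P=1$ there), so $M$ is compact of finite diameter $D:=\operatorname{diam}(M,g)$.

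First I would set up the distance comparison. For $x=(q,v)\in X$ with $t(x)=h(v,\overline v)$, concatenating a minimizing geodesic in $M$ from $p$ to $q$ with the radial path inside the fiber from $0_q$ to $x$ gives $d_{\widetilde g}(p,x)\le D+r(t(x))$, where a direct computation of the induced fiber metric $Qh\,|dv|^2$ shows the radial fiber distance is $r(t)=\int_0^t\sqrt{Q(\tau)/(2\tau)}\,d\tau$, independent of the base point $q$. Completeness (condition (iii) in Theorem \ref{lexistence}) is exactly $r(\infty)=\infty$, so $r$ is an increasing bijection of $[0,\infty)$; set $t^{\ast}=r^{-1}$. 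Consequently the ``disk bundle'' $\{t\le t^{\ast}(\tau)\}$ is contained in the ball $B(p,D+\tau)$.

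Next I would compute the volume of this disk bundle by Fubini along $\pi$. Using $\widetilde{\omega}^n/n!=\tfrac{P^{n-1}}{(n-1)!}(\pi^{\ast}\omega_g)^{n-1}\wedge\sqrt{-1}\,Qh\,\nabla v\wedge\nabla\overline v$ and integrating the fiber factor in polar coordinates (the Hermitian weight $h$ cancels against the Jacobian $dt=2h\rho\,d\rho$), the fiber integral becomes $2\pi\int_0^{t^{\ast}}P^{n-1}Q\,dt$, which is independent of the base point; hence $\operatorname{Vol}(\{t\le t^{\ast}(\tau)\})=2\pi\operatorname{Vol}(M,g)\int_0^{t^{\ast}(\tau)}P^{n-1}Q\,dt$. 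The crucial simplification is the telescoping identity $P^{n-1}Q=\tfrac{1}{\lambda n}(P^n)'$, which follows from $P'=\lambda Q$ and yields $\int_0^{t^{\ast}}P^{n-1}Q\,dt=\tfrac{1}{\lambda n}\bigl(P(t^{\ast})^n-1\bigr)$.

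The heart of the argument, and the step I expect to be the main obstacle, is converting the weak (integral/monotone) hypotheses into the quadratic growth $P(t^{\ast}(\tau))\gtrsim\tau^2$. Here I would use that $Q$ is nondecreasing to get $r(t)=\int_0^t\sqrt{Q/(2\tau)}\,d\tau\le\sqrt{2\,t\,Q(t)}$, i.e. the pointwise bound $r(t)^2\le 2tQ(t)$; substituting $\tau'=r(t)$ in $P(t^{\ast})=1+\lambda\int_0^{t^{\ast}}Q\,dt$ turns $Q\,dt$ into $\sqrt{2tQ}\,d\tau'\ge r(t)\,d\tau'=\tau'\,d\tau'$, whence $\int_0^{t^{\ast}}Q\,dt\ge\tfrac12\tau^2$ and $P(t^{\ast}(\tau))\ge 1+\tfrac{\lambda}{2}\tau^2$. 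Combining with the elementary inequality $(1+a)^n-1\ge a^n$ for $a\ge 0$ produces $\int_0^{t^{\ast}(\tau)}P^{n-1}Q\,dt\ge\tfrac{\lambda^{n-1}}{2^n n}\tau^{2n}$, so $\operatorname{Vol}(B(p,D+\tau))\ge c\,\tau^{2n}$ for an explicit $c>0$. Writing $s=D+\tau$ gives (\ref{bigger}) for $s\ge 2D$ (using $s-D\ge s/2$); for the compact range $[0,2D]$ the local Euclidean behavior of the smooth complete metric near $p$ supplies $\operatorname{Vol}(B(p,s))\ge c's^{2n}$ for small $s$, and monotonicity of $\operatorname{Vol}(B(p,\cdot))$ together with shrinking the constant yields a single $C_1>0$ valid for all $s\ge 0$.
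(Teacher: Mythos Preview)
Your proposal is correct and follows essentially the same route as the paper: compute the volume form, observe the telescoping $P^{n-1}Q=\tfrac{1}{\lambda n}(P^n)'$, reduce to showing $P\gtrsim \widetilde{s}^2$, and finish by comparing tubes (disk bundles) with geodesic balls. The only notable difference is in the derivation of $P\gtrsim\widetilde{s}^2$: the paper computes directly that $\tfrac{d^2P}{d\widetilde{s}^2}=4\lambda(1-\chi)\ge 4\lambda$ and integrates twice, whereas you obtain the same bound via the change of variables $\tau'=r(t)$ together with the inequality $r(t)^2\le 2tQ(t)$ (which is itself the integrated form of $\tfrac{d}{d\widetilde{s}}\sqrt{tQ}=1-\chi\ge 1$). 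These are the same computation in two guises; the paper's version is a shade more direct. (Minor: your fiber distance should be $r(t)=\int_0^t\tfrac{\sqrt{Q}}{2\sqrt{\tau}}\,d\tau$, off by $\sqrt{2}$ from what you wrote, but this affects only constants.)
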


\begin{proof}[Proof of Corollary \ref{bigeuclidean}]
Let $T(M, c)$ denote the `tube' set which consists of all points $q$ so that $d(q, M) \doteq \min_{p \in M} d(q, p) \leq c$.

From Lemma \ref{lcurvature}, we may solve the volume form of $\widetilde{g}$ by a direct linear algebra argument
\begin{align}
\frac{{\widetilde{\omega}}^n}{n!}=P^{n-1}Q h \,  (\sqrt{-1})^{n-1} \det(g_{i\bar{j}})\,\prod_{p=1}^{n-1}{dz_p \wedge d\overline{z_p}} \wedge \sqrt{-1} dv \wedge d\bar{v}.  \label{volform}
\end{align}
Note that (\ref{volform}) not only holds along the fiber $\pi^{-1}(p)$ where we assume $dh_{\alpha \overline{\beta}}(p)=0$, it also holds in $\pi^{-1}(U)$ where $U \subset M$ is a open neighborhood of $p$.

Recall that both the base $M$ and each fiber of $L \rightarrow M$ is totally geodesic with respect to $(X, \widetilde{g})$. Moreover, $d(q, M)=d(q, p)$ where $p=\pi(q)$. Let $\widetilde{s}(q)=d(q, M)$. For a fixed reference point $p \in M$, $\widetilde{s}(q)$ is equivalent to $d(q, p)$ when $\widetilde{s}$ is sufficiently large. Therefore, we only need to prove (\ref{bigger}) for $\operatorname{Vol}(T(M, \widetilde{s}))$ instead. To that end, we compare
\[
\operatorname{Vol}(T(M, \widetilde{s}))=\pi \int_{0}^t P^{n-1} Q d\tau \, \operatorname{Vol}(M, g)=\frac{\pi (P^n-1)}{n \lambda}\operatorname{Vol}(M, g), \ \ \ \widetilde{s} \doteq \int_0^t \frac{\sqrt{Q}}{2\sqrt{\tau}} d\tau.
\]
In the meantime, it follows from (\ref{MNdef}) and (\ref{chidef}) that
\[
\frac{d}{d\widetilde{s}} P=2\lambda \sqrt{tQ}, \ \ \ \ \frac{d^2}{d\widetilde{s}^2}P=4\lambda(1-\chi) \geq 4\lambda.
\]
After integration by twice, we get $P \geq 2\lambda \widetilde{s}^2$, which proves (\ref{bigger}).
\end{proof}

\begin{example}\label{euclideanvol}
Given any $n \geq 2$ and $(M, g)$ to be an $(n-1)$-dimensional compact ball quotient with its standard K\"ahler-Einstein metric $g$ so that $Ric(g)=-g$, by Theorem \ref{lexistence} we may construct a complete K\"ahler metric $\widetilde{g}$ with $BI<0$ on the total space $X$ of the anticanonical line bundle $K_M^{-1}$. Indeed, if we choose $\chi(t)$ so that $\chi(\infty)$ is finite, then by Corollary \ref{bigeuclidean} the corresponding metric $\widetilde{g}$ has Euclidean volume growth in the sense that
\begin{align}
\lim_{s \rightarrow \infty}\frac{\operatorname{Vol}(B(p, s))}{
s^{2n}}=\frac{[2(1-\chi(\infty))]^n}{n}\operatorname{Vol}(M, g).
\end{align}
Moreover, if we choose $\chi'(0)$ and $Q(0)$ so that
$\frac{\chi'(0)}{Q(0)}-\lambda^2<0$, then we may show that $\widetilde{g}$ does not have negative sectional curvature along the zero section, see also Theorem \ref{mainApp}.
\end{example}

The following result is a more precise statement of the line bundle case of Corollary \ref{dimcount1}.

\begin{corollary}\label{dimcount1}
Given $(M, g)$ of complex dimension $n-1$ and a holomorphic line bundle $L \rightarrow M$ as in Theorem \ref{main1}, we consider $X$ the total space of the line bundle $L$ (i.e. we pick $E=L$ in Theorem \ref{main1}). Then for any complete metric $(X, \widetilde{g})$ in the form of (\ref{themcalabi}) with $BI \leq 0$ and any $k \in \mathbb{Z}^{+}$, we have
\begin{equation}
\operatorname{dim} \mathcal{O}_k(X, \widetilde{g}) \geq \sum_{p=0}^{k} \operatorname{dim} H^0(M, L^{-p}).   \label{dimcomp11}
\end{equation}
Moreover, if (\ref{dimcomp11}) holds along a sequence $\{k_j\}_{j=1}^{\infty} \rightarrow +\infty$, then the corresponding K\"ahler metric $\widetilde{g}$ has its $\chi$ (in Theorem \ref{lexistence}) being identically zero, or equivalently $u$ (in Theorem \ref{main1}) linear.
\end{corollary}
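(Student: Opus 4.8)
The plan is to establish the lower bound (\ref{dimcomp11}) by writing down an explicit family of polynomial-growth holomorphic functions on $X$, and then to prove the rigidity statement by a growth-comparison argument that exploits only the monotonicity of $\chi$ guaranteed by Theorem \ref{lexistence}.

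For the lower bound, I would, for each integer $0 \le p \le k$ and each section $s \in H^0(M, L^{-p})$, define a holomorphic function $f_{s,p}$ on $X$ by pairing $s$ against the fiber coordinate: at a point $(x,v) \in X$ with $v \in L_x$, set $f_{s,p}(x,v) = \langle s(x), v^{\otimes p}\rangle$ using the natural duality $(L^{-p})_x \times (L^{p})_x \to \mathbb{C}$. Such an $f_{s,p}$ is holomorphic since $s$ is holomorphic and $v \mapsto v^{\otimes p}$ is polynomial, and Cauchy--Schwarz for $h$ gives $|f_{s,p}(x,v)| \le |s(x)|_{h^{-p}}\, t^{p/2}$ with $t = h(v,v)$, where $|s|_{h^{-p}}$ is bounded on the compact $M$. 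I then need two facts. First, a growth estimate: by Theorem \ref{lexistence} the metric corresponds to some $\chi$ with $\chi(0)=0$ and $\chi'\le 0$, so $\chi \le 0$, and then (\ref{MNdef}), (\ref{chidef}), (\ref{Ntaueqn}) give $Q(t) \ge Q(0) > 0$; combined with the radial distance formula $\widetilde{s} = \int_0^t \frac{\sqrt{Q}}{2\sqrt{\tau}}\, d\tau$ from the proof of Corollary \ref{bigeuclidean}, this yields $\sqrt{t} \le Q(0)^{-1/2}\widetilde{s}$, hence $|f_{s,p}| \le C\,\widetilde{s}^{\,p} \le C(\widetilde{s}+1)^{k}$ for $p \le k$; since $\widetilde{s}$ is equivalent to the distance from a fixed base point (as $M$ is compact and totally geodesic), each $f_{s,p} \in \mathcal{O}_k(X,\widetilde{g})$. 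Second, linear independence: the fiberwise action $v \mapsto \zeta v$ scales $f_{s,p}$ by $\zeta^{p}$, so functions of distinct fiber-degree $p$ sit in distinct weight spaces of this $\mathbb{C}^{\ast}$-action, and within a fixed $p$ the nondegeneracy of the duality pairing reduces independence of the $f_{s,p}$ to that of the sections $s$. Summing dimensions over $0 \le p \le k$ gives (\ref{dimcomp11}). I stress that this uses only $\chi \le 0$ (i.e. $BI \le 0$) and the fiber symmetry of the Calabi metric, not any curvature lower bound, which is why the integral condition (\ref{kscondi1}) is not needed here.

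For the rigidity I would argue by contraposition. Suppose $\chi \not\equiv 0$. Since $\chi$ is non-increasing with $\chi(0)=0$, there is some $t_0$ with $\chi(t_0) = -\varepsilon < 0$, and monotonicity forces $\chi(t) \le -\varepsilon$ for all $t \ge t_0$. Then (\ref{Ntaueqn}) gives $Q(t) = Q(0)\exp\!\big(-\int_0^t \tfrac{\chi(\tau)}{\tau}\,d\tau\big) \ge c\, t^{\varepsilon}$ for large $t$, whence $\widetilde{s}(t) \ge c'\, t^{(1+\varepsilon)/2}$ and $t^{1/2} \le C\,\widetilde{s}^{\,1/(1+\varepsilon)}$. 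Consequently $f_{s,p}$ now grows at most like $\widetilde{s}^{\,p/(1+\varepsilon)}$, so $f_{s,p} \in \mathcal{O}_k$ whenever $p \le (1+\varepsilon)k$, and the same weight argument yields $\dim \mathcal{O}_k(X,\widetilde{g}) \ge \sum_{p=0}^{\lfloor(1+\varepsilon)k\rfloor}\dim H^0(M, L^{-p})$. Because $c_1(L)<0$ makes $L^{-1}$ ample, $\dim H^0(M, L^{-p}) > 0$ for all large $p$, so the right-hand side strictly exceeds $\sum_{p=0}^{k}\dim H^0(M, L^{-p})$ for all large $k$; this contradicts equality in (\ref{dimcomp11}) along $k_j \to \infty$. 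Hence $\chi \equiv 0$, which by (\ref{chidef}) means $Q$ is constant, so $u'(t) = \frac1t\int_0^t Q\,d\tau \equiv Q(0)$ and $u$ is linear.

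The main obstacle is the rigidity step, and in particular the observation that the sign constraint $\chi'\le 0$ alone --- with no quantitative decay hypothesis --- already forces $Q$ to grow at least polynomially once $\chi$ dips below zero, which is precisely what allows fiber monomials of degree $p > k$ to enter $\mathcal{O}_k$ and destroy equality. I would also be careful about the comparison between $\widetilde{s}$ and the distance from a fixed base point, and about verifying that the extra degrees $p \in (k,(1+\varepsilon)k]$ genuinely contribute, which rests on the ampleness of $L^{-1}$.
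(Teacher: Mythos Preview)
Your proposal is correct and follows essentially the same approach as the paper's proof: the same construction of $f_{s,p}$ (which the paper writes in local trivializations rather than via the duality pairing), the same growth estimate $\sqrt{t}\le C\,\widetilde{s}$ from $Q\ge Q(0)$, and the same rigidity mechanism of showing that $\chi\not\equiv 0$ forces $Q(t)\ge c\,t^{\varepsilon}$ and hence $\sqrt{t}\le C\,\widetilde{s}^{\,1/(1+\varepsilon)}$, so extra fiber degrees enter $\mathcal{O}_k$ and violate equality. The only cosmetic difference is that the paper needs just one extra degree ($p=k+1$) rather than the full range $p\le\lfloor(1+\varepsilon)k\rfloor$, but your version works equally well.
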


\begin{proof}[Proof of Corollary \ref{dimcount1}]

Let $\{U_i\}$ be an open cover of $M$, $\psi_{i,k}: \pi_1^{-1}(U_i) \rightarrow U_i \times \mathbb{C}$, and $\varphi_i: \pi_2^{-1}(U_i) \rightarrow U_i \times \mathbb{C}$ be local trivializations of $\pi_1: L^{-k} \rightarrow M$ and $\pi_2: L \rightarrow M$ respectively. For simplicity, let $p_2$ denote the projection to the second factor in either of $\psi_{i,k}$ or $\varphi_i$.

For any given $s \in H^0(M, L^{-k})$, we observe
\[
f_k(q)=p_2 \circ \psi_{i,k} \circ s(\pi_2(q)) \cdot \Big(p_2 \circ \varphi_i (q)\Big)^k, \ \ q \in \pi_2^{-1}(U_i) \subset X.
\]
is a global holomorphic function on $X$. Note that the local coordinate $p_2 \circ \varphi_i (q)$ is exactly $v$ in Lemma \ref{lcurvature}.

Conversely, any global holomorphic function $f$ on $X$ has a Taylor expansion on $\pi_2^{-1}(U_i)$ as
\begin{align}
f(q)=\sum_{k=0}\frac{1}{k!} f_k(\pi_2(q)) \Big(p_2 \circ \varphi_i (q)\Big)^k   \label{taylor}
\end{align} where each $f_k$ defines an element in $H^0(M, L^{-k})$. This point can be checked by comparing transition functions along intersections of $U_i \cap U_j$.

As $\widetilde{g}$ in Theorem \ref{lexistence}, recall the distance $\widetilde{s}(q)$ of any point $q \in X$ to the zero section $M$ is $\int_0^{t} \frac{\sqrt{Q}}{2\sqrt{\tau}}d\tau$. In view of (\ref{Ntaueqn}), we get $Q(t)$ is increasing and $Q(t) \geq u'(0)>0$. Hence
\[
\widetilde{s}(q) \geq \sqrt{u'(0)}\sqrt{t}=\sqrt{u'(0)} \sqrt{h} |p_2 \circ \varphi_i(q)|.
\]
For any fixed reference point $p \in M$, $d(q, p)$ is equivalent to $\widetilde{s}(q)$. We see that $(p_2 \circ \varphi_i(p))^k$ is of polynomial growth with order at most $k$. By (\ref{taylor}), we get the dimension comparison inequality (\ref{dimcomp11}).

Now we turn to the equality case. If (\ref{dimcomp11}) holds along a sequence $\{k_j\}_{j=1}^{\infty} \rightarrow +\infty$, we first claim
\begin{align}
\limsup_{\widetilde{s} \rightarrow \infty} \frac{\sqrt{t}}{\widetilde{s}}>0  \label{limit1}.
\end{align}
If (\ref{limit1}) holds, recall
\[
\frac{\frac{d\sqrt{t}}{dt}}{\frac{d\widetilde{s}}{dt}}=
\frac{\frac{1}{2\sqrt{t}}}{\frac{\sqrt{Q(t)}}{2\sqrt{t}}}
=\frac{1}{\sqrt{Q(t)}}.
\]
We see $\lim_{t \rightarrow \infty} Q(t)=Q(\infty)$ is finite. It follows from (\ref{Ntaueqn}) that $\chi$ is identically zero as $-\chi$ is increasing. If so, $u$ is linear from (\ref{Ntaueqn}).

It remain to show (\ref{limit1}). Assume the contrary, then $\lim_{t \rightarrow \infty} Q(t)=\infty$. By (\ref{Ntaueqn}) that $-\chi(t_0)>0$ for some $t_0>0$. As $-\chi$ increasing, we get $\int_0^t \frac{\chi(\tau)}{\tau}d\tau \geq -\chi(t_0) \ln (\frac{t}{t_0})$ for any $t>t_0$. Using (\ref{Ntaueqn}) again, we get $Q(t) \geq c_2 t^{c_1}$ for some positive $c_1, c_2$. It further leads to $\widetilde{s} \geq c_3 t^{\frac{c_1+1}{2}}$. In other words, we get $\sqrt{t} \leq c_4 {\widetilde{s}}^{\alpha}$ where $\alpha=\frac{1}{1+c_1}$. Then we have
\[
(\sqrt{t})^{k+1} \leq {\widetilde{s}}^{\alpha (k+1)} \leq {\widetilde{s}}^{k}
\] if we pick $k$ large enough so that $\alpha<1-\frac{1}{k+1}$. It implies that for any such $k$
\begin{equation}
\operatorname{dim} \mathcal{O}_{k}(X, \widetilde{g}) \geq \sum_{p=0}^{k+1} \operatorname{dim} H^0(M, L^{-p}). \label{dimcomp12}
\end{equation} Recall we assume that (\ref{dimcomp11}) holds along an infinite sequence $\{k_j\}$. We get $H^0(M, L^{-k_j-1})=\{0\}$. It is impossible as $L^{-1}$ is ample.

\end{proof}

Now we turn to the vector bundle case. If $E$ is a vector bundle satisfying \ref{AssumeD}, we may endow $E$ with a Hermitian metric as a direct sum of $r$ copies of $h$. Then the curvature of $E$ satisfies
\[
R^{E}_{\alpha \overline{\beta} i \overline{j}} \doteq R^{E} (\frac{\partial}{\partial z_i}, \frac{\partial}{\partial \overline{ z_j}}, e_{\alpha}, \overline{e_{\beta}})=Ric^{L}_{i\bar{j}}h\, \delta_{\alpha\bar{\beta}}.
\]
Now we consider $(p, v^{\alpha}e_{\alpha})$ in $\pi^{-1}(p)$, where \textbf{$v^{\alpha}=0$} for $1 \leq \alpha \leq r-1$.

\begin{lemma}\label{vmetric}
Let $E$ be a vector bundle satisfying \ref{AssumeD}.  Then the local components of $\widetilde{g}$ with respect to the local holomorphic frame $\{\frac{\partial}{\partial z_1},\cdots, \frac{\partial}{\partial z_{n-r}}, \frac{\partial}{\partial v^1}, \cdots,  \frac{\partial}{\partial v^r}\}$ are
\begin{align*}
\widetilde{g}_{i\bar{j}}&=P\,g_{i\bar{j}}+Q_{\xi \bar{\eta}} v^{\xi}\overline{v^{\eta}} h \frac{\partial \ln h}{\partial z_i} \frac{\partial \ln h}{\partial \overline{z_j}},\ \ 1 \leq i, j \leq n-r;\\
\widetilde{g}_{i\bar{\beta}}&=Q_{\xi\bar{\beta}} v^{\xi} h \frac{\partial \ln h}{\partial z_i}, \ \ \ \widetilde{g}_{\alpha \bar{j}}=Q_{\alpha \bar{\eta}} h \bar{v^{\eta}} \frac{\partial \ln h}{\partial \overline{z_j}},\\
\widetilde{g}_{\alpha\bar{\beta}}&=Q_{\alpha\bar{\beta}} h.
\end{align*} Here we set
\begin{align}
P=1+\lambda u'(t) t, \ \ \ Q_{\alpha\bar{\beta}}=u^{\prime\prime}(t) h \overline{v^{\alpha}}v^{\beta}+u^{\prime}(t)\delta_{\alpha\bar{\beta}}.
\end{align}
In particular, at the point $(p, v^{\alpha}e_{\alpha})$, $Q_{\alpha\bar{\beta}}$ becomes diagonal with elements $(S, \cdots, S, T)$ where
\begin{align}
S=u^{\prime}(t),\ \ \ T=tu^{\prime\prime}(t)+u^{\prime}(t).  \label{PQdef}
\end{align}
\end{lemma}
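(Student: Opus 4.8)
The plan is to read off the components of $\widetilde{g}$ by expanding $\sqrt{-1}\partial\overline{\partial}u$ directly in the holomorphic coordinate frame, using the fact that for a decomposable bundle $E=L^{\oplus r}$ with the direct-sum Hermitian metric one has $h_{\alpha\bar\beta}=h\,\delta_{\alpha\bar\beta}$, so the fiber-norm function is the explicit expression $t=h\sum_{\alpha=1}^{r}|v^\alpha|^2$. Since $\pi^{\ast}\omega_g$ contributes only the base block $g_{i\bar j}$, the whole computation reduces to the complex Hessian $\partial_A\partial_{\bar B}u=u''(t)\,t_A\,t_{\bar B}+u'(t)\,t_{A\bar B}$, where $A,B$ range over the coordinates $(z_i,v^\alpha)$.

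First I would record the first derivatives $t_i=t\,\partial_{z_i}\ln h$ and $t_\alpha=h\,\overline{v^\alpha}$ together with their conjugates, and then the four second-derivative blocks $t_{i\bar j}$, $t_{i\bar\beta}$, $t_{\alpha\bar j}$, $t_{\alpha\bar\beta}$. The hypothesis of \ref{AssumeD} enters only through $t_{i\bar j}$: differentiating $t_i$ produces a term $t\,\partial_{z_i}\ln h\,\partial_{\overline{z_j}}\ln h$ together with the curvature term $t\,\partial_{z_i}\partial_{\overline{z_j}}\ln h$, and the condition that the curvature form of $L$ equals $-\lambda\omega_g$ turns the latter into $\lambda t\,g_{i\bar j}$ (equivalently, this is the identity $R^{E}_{i\bar j\alpha\bar\beta}=Ric^{L}_{i\bar j}\,h\,\delta_{\alpha\bar\beta}$ recorded just above the lemma). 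Collecting the $g_{i\bar j}$ contributions then produces the factor $P=1+\lambda u' t$, while the surviving $\partial_{z_i}\ln h\,\partial_{\overline{z_j}}\ln h$ terms assemble into the claimed coefficient. A quick consistency check is that setting $r=1$ recovers the metric part of Lemma \ref{lcurvature}, with $Q_{1\bar 1}=tu''+u'=Q$.

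The only laborious step is matching the assembled coefficients to the stated tensorial form with $Q_{\alpha\bar\beta}=u''h\,\overline{v^\alpha}v^\beta+u'\delta_{\alpha\bar\beta}$. Concretely I would verify, using $\sum_\alpha|v^\alpha|^2=t/h$, the two contractions $h\,Q_{\xi\bar\eta}v^\xi\overline{v^\eta}=u''t^2+u't$ and $h\,Q_{\xi\bar\beta}v^\xi=h\,v^\beta(u''t+u')$, which are exactly the coefficients emerging in the $i\bar j$ and $i\bar\beta$ blocks; the block $t_{\alpha\bar\beta}=h\,\delta_{\alpha\bar\beta}$ gives $\widetilde{g}_{\alpha\bar\beta}=Q_{\alpha\bar\beta}h$ immediately. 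An equivalent route is to start from the intrinsic form (\ref{Cmetric2}), insert the direct-sum connection coefficients $\Gamma^{\mu}_{\nu i}=(\partial_{z_i}\ln h)\delta^{\mu}_{\nu}$ into $\nabla v^\mu=v^\mu(\partial_{z_i}\ln h)\,dz^i+dv^\mu$, and collect coefficients of $dz^i\wedge d\overline{z^j}$, $dz^i\wedge d\overline{v^\beta}$, and $\nabla v^\mu\wedge\nabla\overline{v^\nu}$; either route is a finite, routine expansion. I expect the main obstacle to be purely organizational rather than analytic: keeping the holomorphic and antiholomorphic fiber indices consistent so that the quadratic-in-$v$ terms contract correctly, and fixing the sign in $\partial_{z_i}\partial_{\overline{z_j}}\ln h=\lambda g_{i\bar j}$.

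Finally, for the ``in particular'' assertion I would evaluate $Q_{\alpha\bar\beta}$ at the chosen point $(p,v^\alpha e_\alpha)$ with $v^\alpha=0$ for $1\le\alpha\le r-1$. There $\overline{v^\alpha}v^\beta$ vanishes unless $\alpha=\beta=r$, in which case $t=h|v^r|^2$; hence $Q_{\alpha\bar\beta}$ is diagonal with the first $r-1$ diagonal entries equal to $u'=S$ and the last equal to $tu''+u'=T$, which is exactly (\ref{PQdef}).
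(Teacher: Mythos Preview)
Your proposal is correct. The paper states Lemma \ref{vmetric} without proof, treating it as a direct specialization of Calabi's formula (\ref{Cmetric2}) to the decomposable case $E=L^{\oplus r}$; your primary route via the chain rule $\partial_A\partial_{\bar B}u=u''\,t_A t_{\bar B}+u'\,t_{A\bar B}$ is an equally valid (and arguably more transparent) way to obtain the same components, and the alternative route you sketch through (\ref{Cmetric2}) is exactly the paper's implicit argument.
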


\begin{lemma}[curvature in the vector bundle case] \label{vcurvature}
Under the same assumption in Lemma \ref{vmetric}, the nonzero curvature components of the K\"ahler metric $\widetilde{g}$ at $(p, v^{\alpha}e_{\alpha})$ are:
\begin{align}
R_{i \bar{j}k \bar{l}}&=P R_{i\bar{j}k\bar{l}}^{M}-tT\lambda^2 (g_{i\bar{l}}g_{k\bar{j}}+g_{i\bar{j}}g_{k\bar{l}}), \label{vC2}\\
R_{k \bar{j}r \bar{r}}&=\Big(-tT_t-T +\lambda t\frac{T^2}{P} \Big)h\lambda g_{k\bar{j}}, \label{vB2}\\
R_{r \bar{r}r \bar{r}}&=-h^2\Big[\Big(2u^{\prime\prime}(t)+4tu^{(3)}(t)+t^2u^{(4)}(t)\Big)-\frac{(T_t)^2t}{T}\Big],   \ \  \label{vA}\\
R_{\mu \bar{\mu}r \bar{r}}&=-h^2 [u^{\prime\prime}(t)+ tu^{(3)}(t)],   \ \  \label{vB1}\\
R_{\mu \bar{\mu}\mu \bar{\mu}}&=2R_{\mu \bar{\mu}\alpha \bar{\alpha}}=-2u^{\prime\prime}(t) h^2  \  (\alpha \neq \mu), \ \  \label{vC1}\\
R_{k \bar{j}\mu \bar{\mu}}&=-hT\lambda g_{k\bar{j}}. \label{vD}
\end{align}
In the above we always assume $1\leq i, j, k, l\leq n-r$, $1 \leq \alpha, \beta, \mu, \nu \leq r-1$, and $r$ means $\frac{\partial}{\partial v^r}$.
\end{lemma}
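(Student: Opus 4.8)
The plan is to compute every component of the curvature of $\widetilde g$ at the distinguished point $(p,v^r e_r)$ directly from the K\"ahler curvature formula
\[
R_{A\bar B C\bar D}=-\frac{\partial^2 \widetilde g_{C\bar D}}{\partial w^A\,\partial\bar w^B}+\widetilde g^{E\bar F}\,\frac{\partial \widetilde g_{C\bar F}}{\partial w^A}\,\frac{\partial \widetilde g_{E\bar D}}{\partial\bar w^B},
\]
where $w=(z^1,\dots,z^{n-r},v^1,\dots,v^r)$ and the capital indices run over all coordinates, feeding in the explicit components from Lemma \ref{vmetric}. Before differentiating I would fix two independent gauges at $p$: normal holomorphic coordinates for $(M,g)$ (so $\partial g(p)=0$ and $\partial_i\partial_{\bar j}g_{k\bar l}(p)=-R^M_{i\bar j k\bar l}$), and a normal holomorphic frame for $L$, obtained by a holomorphic change of frame $e_\mu\mapsto f^{-1}e_\mu$ (a genuine holomorphic coordinate change $v^\mu\mapsto f v^\mu$ on $X$, hence leaving the curvature tensor unchanged). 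This frame is chosen so that $\partial_i\ln h(p)=0$ and the holomorphic Hessian $\partial_i\partial_k\ln h(p)=0$, while \ref{AssumeC} forces the mixed Hessian $\partial_i\partial_{\bar j}\ln h(p)=\lambda g_{i\bar j}$. The single most useful consequence is that at $p$ each off-diagonal block $\widetilde g_{i\bar\beta}=Q_{\xi\bar\beta}v^\xi h\,\partial_i\ln h$ vanishes, so the metric is block diagonal with inverse $\widetilde g^{i\bar j}=P^{-1}g^{i\bar j}$ on the base and $\widetilde g^{\alpha\bar\beta}=h^{-1}(Q^{-1})^{\alpha\bar\beta}$ on the fiber, where $Q=\operatorname{diag}(S,\dots,S,T)$ at $p$.

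I would then organize the computation by index type. For the four–base–index component (\ref{vC2}) all base derivatives of the metric vanish at $p$, so the quadratic term drops out; the $R^M$ piece comes from $\partial_i\partial_{\bar j}g_{k\bar l}$ in the term $Pg_{k\bar l}$, while the $-tT\lambda^2(\cdots)$ piece is produced jointly by $\partial_i\partial_{\bar j}P=\lambda^2 Tt\,g_{i\bar j}$ and by the two factors $\partial_i\partial_{\bar j}\ln h=\lambda g$ surviving from the cross term $Q_{\xi\bar\eta}v^\xi\overline{v^\eta}h\,\partial_k\ln h\,\partial_{\bar l}\ln h$. For the mixed components (\ref{vB2}) and (\ref{vD}) the $-\partial_A\partial_{\bar B}$ term contributes $-(tT_t+T)h\lambda g_{k\bar j}$, respectively $-Th\lambda g_{k\bar j}$, through a single $\partial_k\partial_{\bar j}\ln h=\lambda g$; the quadratic term is nonzero only for (\ref{vB2}), where the surviving off-diagonal derivatives $\partial_k\widetilde g_{r\bar l}$ and $\partial_{\bar j}\widetilde g_{m\bar r}$ each carry one factor $\lambda g$, and contracting against $P^{-1}g^{m\bar l}$ collapses everything to $\lambda t T^2 P^{-1}\,h\lambda g_{k\bar j}$.

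For the pure–fiber components (\ref{vA}), (\ref{vB1}), (\ref{vC1}) I would exploit that each fiber $\pi^{-1}(p)$ is totally geodesic in $(X,\widetilde g)$: by the Gauss equation these ambient components equal the intrinsic curvature of the induced metric on the fiber, which is the $U(r)$–invariant K\"ahler metric on $\mathbb C^r$ with potential $u(h(p)|v|^2)$, reducing the task to the rotationally symmetric computation of Appendix \ref{negUN}. Whether one argues this way or differentiates directly, the essential point — and the main obstacle — is to differentiate the fiber block honestly. The entry $\widetilde g_{\alpha\bar\beta}=u''h^2\overline{v^\alpha}v^\beta+u'h\,\delta_{\alpha\bar\beta}$ depends on $t=h\sum_\gamma|v^\gamma|^2$, and the convenient identity $\widetilde g_{r\bar r}=T(t)h$ holds only along the base directions and along $v^r$ (where $h\sum_\gamma|v^\gamma|^2=h|v^r|^2=t$ is preserved), but fails along the transverse fiber directions $v^\mu$. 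Respecting this radial–versus–tangential distinction is exactly what separates the radial term $R_{r\bar r r\bar r}$ (involving $u^{(3)},u^{(4)}$ through $T_t,T_{tt}$) from the mixed term $R_{\mu\bar\mu r\bar r}$ and the transverse terms $R_{\mu\bar\mu\alpha\bar\alpha}$, $R_{\mu\bar\mu\mu\bar\mu}$.

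The remaining work is bookkeeping: identifying which first derivatives of the metric survive at $p$, and carrying out the contraction with the diagonal inverse $\widetilde g^{\alpha\bar\beta}$ in the quadratic term. I expect the delicate points to be (i) justifying the frame normalization $\partial_i\partial_k\ln h(p)=0$ and its compatibility with the normal coordinates for $g$, and (ii) not conflating the $t$–dependence of the fiber block with its explicit $\overline{v^\alpha}v^\beta$ dependence when taking fiber derivatives, since it is precisely this distinction that yields the $u'''$ and $u^{(4)}$ terms together with the $\widetilde g^{\alpha\bar\beta}$–weighted quadratic corrections in (\ref{vA})–(\ref{vC1}).
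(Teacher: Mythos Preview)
Your approach is correct and is essentially the computation the paper leaves implicit: Lemma \ref{vcurvature} is stated without proof, and the direct calculation from the K\"ahler curvature formula using the expressions in Lemma \ref{vmetric} together with the gauge $dh(p)=0$ is exactly what is intended. Your added normalization $\partial_i\partial_k\ln h(p)=0$ is the right way to suppress spurious quadratic contributions in the base block, and your reduction of the pure-fiber components to the $U(r)$-invariant computation of Appendix \ref{negUN} via the totally geodesic fibers and the Gauss equation is a clean shortcut that the paper's framework (cf.\ \cite[p.272]{Calabi1} and (\ref{ABC one form})) clearly anticipates.
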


\begin{theorem}\label{vexistence}
Let $E$ be a vector bundle satisfying \ref{AssumeD} as in Lemma \ref{vmetric}. Then there is a natural correspondence between a smooth complete K\"ahler metric $\widetilde{g}$ in the form (\ref{Cmetric}) with $BI<0$ on $X$ and a strictly decreasing function $\widetilde{\chi}(t) \in C^{\infty}[0, +\infty)$ satisfying $\widetilde{\chi}(0)=0$ and $\widetilde{\chi}^{\prime}(0)<0$. Similar results also hold for $BI \leq 0$ as in Theorem \ref{lexistence}.
\end{theorem}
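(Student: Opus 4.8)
The plan is to treat the vector bundle case as the line bundle case of Theorem \ref{lexistence} enriched by the $(r-1)$ extra ``transverse'' fiber directions, and to show that these introduce no new constraint on the governing function. By the $U(r)$-invariance of $\widetilde{g}$ along each fiber it suffices to verify $BI<0$ at a point $(p, v^{\alpha}e_{\alpha})$ with $v^{\alpha}=0$ for $\alpha<r$, which is exactly the configuration under which Lemma \ref{vcurvature} is stated. As in Theorem \ref{lexistence}, a smooth complete K\"ahler metric of the form (\ref{Cmetric}) is encoded by a single function $u(t)$; the new feature is that positive-definiteness now forces \emph{both} fiber eigenvalues from (\ref{PQdef}), namely $S=u'$ (with multiplicity $r-1$) and $T=tu''+u'$, to be positive, while the completeness condition is unchanged because the fiber radial direction still carries the coefficient $T$. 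I would use the same monotone quantity as in the line bundle case, $\widetilde{\chi}=-tT_t/T$, so that $T$ plays the role of $Q$.

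The first main step is to expand $R(\widetilde{U},\overline{\widetilde{U}},\widetilde{W},\overline{\widetilde{W}})$ for $\widetilde{U}=U_0+A+b\,\partial_{v^r}$ and $\widetilde{W}=W_0+B+c\,\partial_{v^r}$, where $U_0,W_0$ are horizontal, $A,B\in\mathbb{C}^{r-1}$ are transverse, and $b,c$ are radial. The key structural point is that the components (\ref{vC2}), (\ref{vB2}), (\ref{vA}) involve only horizontal and radial indices, so after recording $P_t=\lambda T$ one checks they coincide verbatim with the line bundle formulas (\ref{lC}), (\ref{lB}), (\ref{lA}); hence the part of the curvature depending only on $(U_0,b,W_0,c)$ is precisely the line bundle expression (\ref{anyBI}), which is negative exactly when $I=\widetilde{\chi}'/T<0$ and $II<0$. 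The genuinely new contributions assemble into manifestly sign-definite forms: the base-transverse terms (\ref{vD}) combine into $-hT\lambda\,|U_0\otimes B+W_0\otimes A|^2$, the transverse-radial terms (\ref{vB1}) into $-h^2(T-S)'\,|cA+bB|^2$, and the transverse-transverse terms (\ref{vC1}) into $-u''h^2\bigl(|A|^2|B|^2+|\langle A,\overline{B}\rangle|^2\bigr)$. Each is nonpositive, so the whole bisectional curvature is the (negative) line bundle part plus three nonpositive pieces.

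The heart of the argument is that the two new curvature conditions, $u''>0$ from (\ref{vC1}) and $(T-S)'>0$ from (\ref{vB1}), are \emph{automatically} implied by $\widetilde{\chi}'<0$, so that a single scalar function still suffices. Indeed $\widetilde{\chi}(0)=0$ together with $\widetilde{\chi}'<0$ gives $\widetilde{\chi}<0$ on $(0,\infty)$, hence $T$ is strictly increasing; since $S=\frac{1}{t}\int_0^t T$ is the running average of $T$, it is strictly increasing as well, giving $u''=S'=(T-S)/t>0$. For the second condition I would set $\phi:=S-(1+\widetilde{\chi})T$ and derive, using $S'=(T-S)/t$ and $T'=-\widetilde{\chi}T/t$, the identity $(t\phi)'=T(\widetilde{\chi}^2-t\widetilde{\chi}')>0$ with $(t\phi)|_{t=0}=0$; thus $\phi>0$, which is exactly $(T-S)'=\phi/t>0$. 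The inequality $II<0$ is handled by the same ODE computation as in the proof of Theorem \ref{lexistence}.

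Finally I would assemble the correspondence. For the forward direction, a metric with $BI<0$ forces $R_{r\overline{r}r\overline{r}}<0$, i.e. $I<0$, i.e. $\widetilde{\chi}'<0$, and $\widetilde{\chi}(0)=0$ holds by construction, so $\widetilde{\chi}$ is the required strictly decreasing function. For the reverse direction, given such a $\widetilde{\chi}$ I solve for $T$, $u'=S$, and $u$ exactly as in (\ref{Ntaueqn}) with $T$ in place of $Q$ (the free constant $T(0)>0$ being the scaling), verify smoothness, positivity and completeness, and conclude $BI<0$ from the three sign-definite pieces together with the line bundle part. The only remaining subtlety is upgrading ``nonpositive'' to strict negativity: since each piece vanishes only on an explicit subspace (the line bundle part when $U_0=b=0$, the transverse pieces when $A=B=0$, and so on), a short case check shows the total is strictly negative whenever $\widetilde{U},\widetilde{W}\neq0$. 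The $BI\le0$ statement follows with all strict inequalities relaxed. I expect the main obstacle to be precisely the bookkeeping in the first step, namely correctly grouping the many cross terms into the three perfect-square/space-form expressions, together with the automatic-positivity argument for $(T-S)'$, which is what allows a single function $\widetilde{\chi}$ to control the metric despite the extra fiber directions.
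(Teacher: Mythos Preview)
Your proposal is correct and follows essentially the same route as the paper: both reduce to a point with $v^{\alpha}=0$ for $\alpha<r$, identify $\widetilde{\chi}=-tT_t/T$ as the governing quantity, and then establish the ``surprising fact'' that $u''>0$ and $(tu'')'>0$ follow automatically from $\widetilde{\chi}'<0$ (your computation $(t\phi)'=T(\widetilde{\chi}^2-t\widetilde{\chi}')$ is exactly the paper's $Num'(t)$ calculation, packaged slightly more cleanly). The one place you go further is the explicit decomposition of $R(\widetilde{U},\overline{\widetilde{U}},\widetilde{W},\overline{\widetilde{W}})$ into the line bundle part plus the three manifestly sign-definite pieces $-hT\lambda\,|U_0\otimes B+W_0\otimes A|^2$, $-h^2(T-S)'\,|cA+bB|^2$, and $-u''h^2(|A|^2|B|^2+|\langle A,B\rangle|^2)$; the paper simply asserts that the expansion yields a negative quantity and leaves the bookkeeping to the reader.
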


\begin{proof}[Proof of Theorem \ref{vexistence}]

The proof is similar to that of Theorem \ref{lexistence}. From Lemma \ref{vmetric}, we see that any smooth complete K\"ahler metric in the form (\ref{Cmetric}) is characterized by a single variable function $u(t)$ defined on $[0, \infty)$ such that
\begin{enumerate}
\item   $u(t) \in C^{\infty}[0, \infty)$ where we use one-sided derivatives at $t=0$.
\item   Both $S(t)$ and $T(t)$ defined in (\ref{PQdef}) are strictly positive on $[0, \infty)$.
\item   $\int_{0}^{\infty} \sqrt{\frac{T(t)}{t}}dt=\infty$.
\end{enumerate}

From Lemma \ref{vcurvature} and the proof of Theorem \ref{lexistence}, if such a complete K\"ahler metric satisfies $BI<0$, it follows that $\widetilde{\chi}(t)=-\frac{t T_t}{T}$ is strictly decreasing on $[0, \infty)$. Moreover, both $u^{\prime\prime}(t)$ and $(tu^{\prime\prime})^{\prime}$ are strictly positive on $[0, \infty)$.
Now we show a somewhat surprising fact that the latter condition automatically holds true if $\widetilde{\chi}(0)<0$ and $\widetilde{\chi}'(t)<0$ on $[0, \infty)$. Recall
\begin{align*}
& T(t)=T(0)\exp(-\int_0^t \frac{\widetilde{\chi}(\tau)}{\tau}d\tau),   \ \ \ u'(t)=\frac{1}{t}\int_0^t T(\tau)d\tau.\\
& u^{\prime\prime}(t)=-\frac{1}{t^2}\int_0^t T(\tau)d\tau+\frac{T(t)}{t}=\frac{1}{t^2}\int_0^t \Big(T(t)-T(\tau)\Big)d\tau.
\end{align*}
Since $\widetilde{\chi}(t)<0$, $T(t)$ is strictly increasing and we get $u^{\prime\prime}(t)>0$ when $t>0$. At $t=0$, the same holds once we note $\widetilde{\chi}^{\prime}(0)<0$. It remains to check that $tu^{\prime\prime}(t)$ is strictly increasing for any $t \geq 0$.

Note that the derivative of $tu^{\prime\prime}(t)$ at $t=0$ is $u^{\prime\prime}(0)>0$. It suffices to consider $t>0$.
\begin{align*}
(tu^{\prime\prime})^{\prime}&=\Big(\frac{\int_0^t (T(t)-T(\tau))d\tau}{t}\Big)^{\prime}\\
&=\frac{1}{t^2}\Big(t^2T'(t)-tT(t)+\int_0^t T(\tau)d\tau\Big)
\end{align*}
Consider the numerator of the above
\begin{align*}
Num(t)=t^2T'-tT+\int_0^t T(s)ds=tT(t)(-\widetilde{\chi}-1)+\int_0^t T(s)ds.
\end{align*}
Obviously $Num(0)=0$ and
\begin{align*}
Num'(t)&=(tT)'[-\widetilde{\chi}-1]+t (-\widetilde{\chi})'T(t)+T(t)\\
&=T(t)\Big[ (\widetilde{\chi}+1)^2+t(-\widetilde{\chi})' +\frac{3}{4}\Big]>0.
\end{align*}

Now we may conclude that any smooth function $\widetilde{\chi}$ on $[0, \infty)$ with $\widetilde{\chi}(0)<0$ and $\widetilde{\chi}'(t)<0$ corresponds to a complete K\"ahler metric with $BI \leq 0$. Indeed, we consider two $(1, 0)$ tangent vectors at $(p, v^{\alpha}e_{\alpha})$ in the form
\begin{align*}
\widetilde{U}&=\sum_{i=1}^{n-1} u^i {\frac{\partial}{\partial z^i}}+\sum_{\xi=1}^{r-1} a^{\xi}\frac{\partial}{\partial v^{\xi}}+\varepsilon  \frac{\partial}{\partial v^{r}}, \  \ U=d\pi (\widetilde{U});\\
\widetilde{W}&=\sum_{i=1}^{n-1} w^i {\frac{\partial}{\partial z^i}}+\sum_{\eta=1}^{r-1} b^{\eta}\frac{\partial}{\partial v^{\eta}}+\mu  \frac{\partial}{\partial v^{r}}, \  \ W=d\pi (\widetilde{W}).
\end{align*}
Following (\ref{anyBI}) in the proof of Theorem \ref{lexistence}, we may expand $R(\widetilde{U}, \overline{\widetilde{U}}, \widetilde{W}, \overline{\widetilde{W}})$ to show that it is indeed negative as long as neither $\widetilde{U}$ nor $\widetilde{W}$ is zero.

\end{proof}

Let $\mathcal{P}_k$  be homogeneous polynomials on $\mathbb{C}^n$ with complex coefficients and degree $k$. Note that $\mathcal{P}_k$ is a (complex) linear space after adding $0$. Moreover, $\operatorname{dim}_{\mathbb{C}}(\mathcal{P}_k \cup \{0\})=\binom{n+k-1}{n-1}$. In view of this fact, Corollary \ref{dimcountintro} follows from the proof of Corollary \ref{dimcount1}.

\section{Dimension estimates on holomorphic functions}\label{sec4}

\subsection{Dimension estimates: upper bounds}\label{uppersub}

In this subsection, we consider a class of K\"ahler metrics on total space of line bundles, and bound dimensions of holomorphic functions with polynomial growth from the above.

\subsubsection{Preliminary results on Hessian comparison}

We begin with a lemma on an ODE estimate.

\begin{lemma}[based on {\cite[Lemma 4.5 on p.62]{GW79}}]\label{eta}
Let $0 \leq k(t) \in C^0[0,+\infty)$ and $u(t) \in C^2[0, \infty)$ the solution to the following equation
\begin{equation}
u^{\prime\prime}(t)-k(t)u(t)=0, \ \ \text{with}\ \ u(0)=0,\ u'(0)=1.     \label{res}
\end{equation}
If $\int_{0}^{+\infty}sk(s)\,ds < \infty$, then $1\le u'(t)\le \eta$ where
\begin{equation}
\eta=\exp(\int_{0}^{+\infty}sk(s)\,ds).  \label{etadef}
\end{equation}
Moreover, $\lim_{t \to +\infty} u'(t) \le \eta$ and the equality holds if and only if $\eta=1$ and $k(t)\equiv 0$.
\end{lemma}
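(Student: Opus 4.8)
The plan is to extract from the second-order equation (\ref{res}) precise information about the monotone quantity $u'$, exploiting that $k\ge 0$. First I would establish positivity: since $u(0)=0$ and $u'(0)=1>0$, the function $u$ is positive on a small interval $(0,\varepsilon)$, and on any interval where $u>0$ we have $u''=ku\ge 0$, so $u'$ is nondecreasing and hence $u'\ge u'(0)=1$. A continuity (bootstrap) argument then shows that $u>0$ and $u'\ge 1$ hold on all of $(0,+\infty)$: if the positivity interval terminated at some finite $T$, then $u$ being strictly increasing there would force $u(T)>0$, allowing positivity to persist past $T$. This already yields the lower bound $u'(t)\ge 1$ together with the monotonicity of $u'$.

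For the upper bound I would run a Gronwall-type estimate. Because $u'$ is nondecreasing, $u(t)=\int_0^t u'(s)\,ds\le t\,u'(t)$. Integrating (\ref{res}) gives $u'(t)=1+\int_0^t k(s)u(s)\,ds\le 1+\int_0^t s\,k(s)\,u'(s)\,ds$, and Gronwall's inequality with kernel $s\,k(s)$ yields $u'(t)\le \exp(\int_0^t s\,k(s)\,ds)\le \eta$. Since $u'$ is nondecreasing and bounded above by $\eta$, the limit $\lim_{t\to+\infty}u'(t)$ exists and is at most $\eta$; convergence of all improper integrals involved follows from $u(s)\le \eta s$ together with the hypothesis $\int_0^\infty s\,k(s)\,ds<\infty$.

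The delicate point—and the one I expect to be the main obstacle—is the rigidity statement in the equality case, which requires tracking strictness through the comparison. I would introduce the comparison function $v(t)=\exp(\int_0^t s\,k(s)\,ds)$, which satisfies $v(t)=1+\int_0^t s\,k(s)\,v(s)\,ds$ and $v(+\infty)=\eta$, and record the chain $u(s)\le s\,u'(s)\le s\,v(s)$ already used above. Subtracting the integral identities for $v$ and for $u'$ gives the clean expression $\eta-\lim_{t\to+\infty}u'(t)=\int_0^\infty k(s)\big(s\,v(s)-u(s)\big)\,ds$, whose integrand is nonnegative. Hence equality forces $k(s)\big(s\,v(s)-u(s)\big)=0$ for every $s$ (pointwise, by continuity). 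On any interval where $k>0$ this forces $u(s)=s\,u'(s)$; since $u'$ is nondecreasing, $\int_0^s\big(u'(s)-u'(\rho)\big)\,d\rho=0$ makes $u'$ constant, equal to $1$, on $[0,s]$, whence $u''=0=k\,u$ there, contradicting $u>0$ and $k>0$. Therefore $k\equiv 0$, so $\eta=1$ and $u'\equiv 1$; the converse, that $k\equiv 0$ gives equality with $\eta=1$, is immediate.
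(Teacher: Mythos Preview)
Your argument is correct and follows essentially the same route as the paper: both use the key inequality $u(t)\le t\,u'(t)$ (the paper via $(\tfrac{u}{u'})'\le 1$, you via monotonicity of $u'$) and then a Gronwall-type step to obtain $u'(t)\le\exp\bigl(\int_0^t sk(s)\,ds\bigr)$. Your equality analysis via the explicit difference $\eta-\lim u'=\int_0^\infty k(s)\bigl(sv(s)-u(s)\bigr)\,ds$ is a mild repackaging of the paper's direct strictness argument on $\tfrac{u''}{u'}\le tk(t)$, but the content is the same.
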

\begin{proof}[Proof of Lemma \ref{eta}]
It follows from the initial condition in (\ref{res}) that $u''=ku\ge 0$ for any $t \geq 0$. Hence $u'(t) \geq u'(0)=1$. Now we solve
\[
(\frac{u}{u'})'=\frac{(u')^2-u''u}{(u')^2}=1-k\frac{u^2}{(u')^2} \le 1.
\]
Hence
\begin{equation*}
    \frac{u''}{u'}=k\frac{u}{u'}\le tk(t).
\end{equation*}
By integration we have
\begin{equation*}
    \log u'(t)=\int_{0}^{t}\frac{u''(s)}{u'(s)}\,ds \le \int_{0}^{+\infty}sk(s)\,ds < \infty.
\end{equation*}
It follows that $\lim_{t \to +\infty} u'(t)$ exists and $u'< \eta$ as $u'$ is increasing.

If $k(t_0)>0$ for some $t_0 >0$, there exists small $\delta>0$ such that $\frac{u^{\prime\prime}(t)}{u'(t)}< tk(t)-\epsilon$ for some $\epsilon>0$ on $[t_0,t_0+\delta]$.
It follows that
\[
\ln u'(t) \leq \int_{0}^{+\infty}sk(s)\,ds-\epsilon \, \delta < \int_{0}^{+\infty}sk(s)\,ds.
\]
Therefore $\lim_{t \to +\infty} u'(t)<\eta$ whenever $k(t)$ is not identically zero.
\end{proof}

Let $(\M, \g)$ be a complete noncompact Riemannian manifold and $M \subset \widetilde{M}$ be a compact submanifold with induced metric $g$. Let $\s$ be the distance function of any point $q \in \M$ away from $M$.
As in the case of $M$ being a single point, we have the notion of normal geodesic maps. Similarly, we may define
focal points, Fermi coordinates, and cut-focal points, compared with conjugate points, geodesic normal coordinates, and cut points respectively. Let $Cut(M) \subset \M$ denote the set of all cut-focal points with respect to $M$. Our goal is to estimate the Hessian of $\s$ along the radial direction away from $Cut(M)$. We refer to Gray \cite{Gray} and Sakai \cite{Sakai} for more background on normal exponential maps, cut-focus points, and related notions. In fact, we are mainly concerned with the K\"ahler case. We refer to Li-Wang \cite{LW2005}, Tam-Yu \cite{TY2012} and Ni-Zheng \cite{NZ2018} for more information on comparison theorems in K\"ahler geometry. In particular, the latter two discuss in greater detail comparison results related to the distance function away from a compact submanifold in a K\"ahler manifold.

We recall the Riccati comparison principle.

\begin{lemma}[{\cite[Proposition 6.4.1 on p.254]{Pe}}]\label{rct}
    Let $f_1,f_2:(0,+\infty) \to\R$ be differentiable functions. Suppose  $f_1,f_2$ satisfy that
    \begin{equation}
        f'_1+(f_1)^2 \le f'_2+(f_2)^2,
    \end{equation}
    and
    \begin{equation}
        \limsup_{t\to 0+}(f_2(t)-f_1(t)) \ge 0.
    \end{equation}
    Then $f_1(t) \le f_2(t)$.

\end{lemma}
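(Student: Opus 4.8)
The plan is to reduce the quadratic (Riccati-type) comparison to a first-order \emph{linear} differential inequality for the difference $w(t) := f_2(t) - f_1(t)$ and then to integrate it. Rewriting the hypothesis $f_1' + (f_1)^2 \le f_2' + (f_2)^2$ and factoring the quadratic part via $(f_2)^2 - (f_1)^2 = (f_2 - f_1)(f_1 + f_2) = w\,(f_1 + f_2)$, I would obtain
\[
w'(t) + \big(f_1(t) + f_2(t)\big)\, w(t) \ge 0 \quad \text{on } (0, +\infty).
\]
This is the key reformulation: the comparison is now linear in $w$, and it remains only to show $w \ge 0$.

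Next I would introduce the integrating factor $\mu(t) := \exp\!\big(\int_{t_1}^{t}(f_1 + f_2)\,ds\big)$ based at an arbitrary point $t_1 \in (0, +\infty)$, so that $\mu(t_1) = 1$ and $\mu > 0$ throughout. Multiplying the displayed inequality by $\mu$ gives $(\mu w)'(t) \ge 0$, hence $\mu w$ is non-decreasing. Arguing by contradiction, suppose $f_1(t_1) > f_2(t_1)$, i.e. $w(t_1) < 0$, for some $t_1$. By monotonicity, $\mu(t)\,w(t) \le \mu(t_1)\,w(t_1) = w(t_1) < 0$ for every $t \in (0, t_1]$, and since $\mu > 0$ this yields both $w(t) < 0$ and the quantitative bound $w(t) \le w(t_1)/\mu(t)$ on $(0, t_1]$.

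The hard part will be the boundary analysis as $t \to 0+$: I must upgrade the pointwise negativity of $w$ to a strictly negative limit superior in order to contradict the hypothesis $\limsup_{t\to 0+} w(t) \ge 0$, and this is where control of $\mu(t)$ near the origin is decisive. In the setting where the lemma is applied — $f_1, f_2$ being radial second fundamental forms / Hessians of distance functions, which blow up like $1/t$ as $t \to 0+$ — one has $f_1 + f_2 \to +\infty$, so $1/\mu(t) = \exp\!\big(\int_t^{t_1}(f_1 + f_2)\,ds\big) \to +\infty$. Combined with $w(t_1) < 0$, the bound $w(t) \le w(t_1)/\mu(t)$ forces $w(t) \to -\infty$, whence $\limsup_{t\to 0+} w(t) = -\infty$, contradicting the hypothesis; thus $w \ge 0$, i.e. $f_1 \le f_2$. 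I expect this last step to require the most care, since the conclusion genuinely depends on the blow-up of $f_1 + f_2$ at the origin rather than on the $\limsup$ condition in isolation; accordingly, the cleanest route is to retain the explicit decay rate of $\mu(t)$ rather than to use only its qualitative monotonicity.
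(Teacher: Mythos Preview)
Your approach is exactly the paper's: set $F = \int (f_1+f_2)$, observe $\big((f_2-f_1)e^F\big)' \ge 0$, and conclude. The paper then simply writes ``hence $(f_2-f_1)e^F \ge 0$'' with no further comment, so your more careful boundary discussion is in fact an improvement in rigor over the paper's own proof.

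You are also right that the last step cannot be closed from the $\limsup$ hypothesis alone; the lemma as literally stated is false. Take $f_1(t) = -\tfrac{1}{t} + t^2$ and $f_2(t) = -\tfrac{1}{t} - t^2$: then $f_1' + f_1^2 = f_2' + f_2^2 = \tfrac{2}{t^2} + t^4$ and $f_2 - f_1 = -2t^2$ satisfies $\limsup_{t\to 0+}(f_2-f_1) = 0$, yet $f_1 > f_2$ everywhere (indeed $(f_2-f_1)e^F \equiv -2t_1^2$ is a negative constant). What rescues the argument in every application in the paper is precisely the feature you isolate: there both $f_i \sim \tfrac{1}{t}$ as $t\to 0+$, so $e^{F(t)} \to 0$, and combined with the stronger fact $\lim_{t\to 0+}(f_2-f_1) = 0$ (which is what is actually verified whenever the lemma is invoked) one gets $(f_2-f_1)e^F \to 0$, after which monotonicity gives $(f_2-f_1)e^F \ge 0$. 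So your instinct to lean on the blow-up of $f_1+f_2$ at the origin is exactly the missing ingredient, and the gap you worry about is shared by the paper's proof rather than peculiar to yours.
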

\begin{proof}[Proof of Lemma \ref{rct}]
    Let $F(t)=\int (f_1 +f_2) \,dt$.
    $$\frac{d}{dt}(f_2-f_1)e^F=(f'_2-f'_1+f_2^2-f_1^2)e^F \ge 0.$$
    Hence $(f_2-f_1)e^F$ is increasing and $(f_2-f_1)e^F\ge 0$.
\end{proof}

Write $\mathcal{N}=\nabla\s$. Define $H: T\M \rightarrow T\M$ by
\begin{equation}
\g(H(X),Y)=\nabla^2\s(X,Y).      \label{Hmapdef}
\end{equation}
Since
\[
\nabla^2\s(X,Y)=XY(\s)-\nabla_X Y(\s)=\g(\nabla_X \mathcal{N},Y),
\]
we have $H(X)=\nabla_X \mathcal{N}$, which is the opposite of the shape operator defined in \cite[p.33]{Gray}.

Define $R_{\mathcal{N}}: T\M \rightarrow T\M$ by
\[
R_{\mathcal{N}}(X) = R(X,\mathcal{N})\mathcal{N}.
\]
In this section the curvature tensor of $\g$ is denoted by $R$.
\begin{proposition}[{\cite[Lemma 3.2 on p.34]{Gray}}]\label{hj}
$H$ and $R_{\mathcal{N}}$ satisfies the following Riccati equation
\begin{align}
\nabla_{\mathcal{N}} H +H^2 +R_{\mathcal{N}}=0.
 \label{hj1}
\end{align}
\end{proposition}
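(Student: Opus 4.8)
The plan is to verify the Riccati identity (\ref{hj1}) pointwise on $\M \setminus (M \cup Cut(M))$, where $\s$ is smooth and $\mathcal{N}=\nabla\s$ is a well-defined unit vector field, by a direct computation with the covariant derivative and the curvature tensor. Since every term in (\ref{hj1}) is an endomorphism field, I would prove the identity by evaluating on an arbitrary tangent vector extended to a local vector field $X$; tensoriality in $X$ makes the choice of extension irrelevant.

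First I would record the two structural facts that drive the computation. Because $\mathcal{N}=\nabla\s$ is a gradient field, the operator $H$ is the Hessian of $\s$ (via (\ref{Hmapdef})) and hence self-adjoint. Because $\s$ is a distance function, $|\mathcal{N}|\equiv 1$ on this domain, and differentiating gives $0=\tfrac{1}{2}\nabla_X|\mathcal{N}|^2=\g(\nabla_X\mathcal{N},\mathcal{N})=\g(H(X),\mathcal{N})$ for every $X$. Combining this with the symmetry of $H$ yields $\g(\nabla_{\mathcal{N}}\mathcal{N},X)=\nabla^2\s(\mathcal{N},X)=\nabla^2\s(X,\mathcal{N})=\g(H(X),\mathcal{N})=0$, so the integral curves of $\mathcal{N}$ are geodesics, i.e.\ $\nabla_{\mathcal{N}}\mathcal{N}=0$ as a vector field. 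This is exactly what makes the ambient second-order term collapse.

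Next I would expand both sides. From the definition of the covariant derivative of the $(1,1)$-tensor $H$ together with $H(X)=\nabla_X\mathcal{N}$, I get $(\nabla_{\mathcal{N}}H)(X)=\nabla_{\mathcal{N}}(H(X))-H(\nabla_{\mathcal{N}}X)=\nabla_{\mathcal{N}}\nabla_X\mathcal{N}-H(\nabla_{\mathcal{N}}X)$. On the curvature side I would write $R_{\mathcal{N}}(X)=R(X,\mathcal{N})\mathcal{N}=\nabla_X\nabla_{\mathcal{N}}\mathcal{N}-\nabla_{\mathcal{N}}\nabla_X\mathcal{N}-\nabla_{[X,\mathcal{N}]}\mathcal{N}$. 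The first summand vanishes since $\nabla_{\mathcal{N}}\mathcal{N}\equiv 0$, and the torsion-free identity $[X,\mathcal{N}]=\nabla_X\mathcal{N}-\nabla_{\mathcal{N}}X=H(X)-\nabla_{\mathcal{N}}X$ converts the bracket term into $\nabla_{[X,\mathcal{N}]}\mathcal{N}=H^2(X)-H(\nabla_{\mathcal{N}}X)$. Substituting and adding the three quantities, the $\nabla_{\mathcal{N}}\nabla_X\mathcal{N}$ and $H(\nabla_{\mathcal{N}}X)$ contributions cancel in pairs, leaving $(\nabla_{\mathcal{N}}H)(X)+H^2(X)+R_{\mathcal{N}}(X)=0$, which is (\ref{hj1}).

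The computation is elementary, so there is no genuine analytic obstacle; the points demanding care are the sign convention for $R$ (I would fix $R(A,B)C=\nabla_A\nabla_B C-\nabla_B\nabla_A C-\nabla_{[A,B]}C$, which is precisely what makes $R_{\mathcal{N}}$ enter (\ref{hj1}) with a $+$ sign) and the bookkeeping of the term $H(\nabla_{\mathcal{N}}X)$, which surfaces in both $(\nabla_{\mathcal{N}}H)(X)$ and the bracket term and must cancel exactly. The one place where the geometric hypotheses are truly used is in establishing $\nabla_{\mathcal{N}}\mathcal{N}=0$ on the correct domain $\M\setminus(M\cup Cut(M))$, where $\s$ is genuinely smooth and $|\nabla\s|\equiv 1$; away from the cut-focal locus this is the expected main step.
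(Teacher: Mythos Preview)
Your proof is correct and follows essentially the same route as the paper's own (commented-out) argument: expand $(\nabla_{\mathcal{N}}H)(X)=\nabla_{\mathcal{N}}\nabla_X\mathcal{N}-H(\nabla_{\mathcal{N}}X)$, unwind $R_{\mathcal{N}}(X)$ via the curvature definition, use the torsion-free identity to handle the bracket term, and invoke $\nabla_{\mathcal{N}}\mathcal{N}=0$. The only cosmetic difference is that you add all three terms and watch them cancel, whereas the paper rewrites $(\nabla_{\mathcal{N}}H)(X)$ alone until it equals $-R_{\mathcal{N}}(X)-H^2(X)$; your explicit justification of $\nabla_{\mathcal{N}}\mathcal{N}=0$ from $|\nabla\s|\equiv 1$ and the symmetry of the Hessian is a nice addition that the paper leaves implicit.
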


Now we consider $(\M^{n},J,\g)$ is a complete K\"ahler manifold with complex dimension $n$ and $M^{n-r} \subset \widetilde{M}$ be a compact complex submanifold with the induced metric $g$. Given any point $p \in M$, let $\gamma(t): [0,b] \to \widetilde{M}$ be a unit-speed geodesic with $\gamma(0)=p$ and $\gamma^{\prime}(t)=\mathcal{N}$. We extend $H$ as a self-adjoint endomorphism $T^{\mathbb{C}} \M \rightarrow T^{\mathbb{C}}\M$. Now we choose a parallel unitary frame $\{e_1,\cdots,e_{n-r},e_{n-r+1},\cdots,e_{n}\}$ of $T^{1, 0} \M$ along $\gamma(t)$ such that $e_n=\frac{1}{\sqrt 2}(\mathcal{N} -\sqrt{-1}J\mathcal{N})$ and $\{e_{1}(0),\cdots,e_{n-r}(0)\} \in T_p^{1,0} M$. Now we are ready to rewrite (\ref{hj1}) in terms of $\{e_1,\cdots,e_{n}\}$.

Let $H(e_k)=A_{ik}e_i+B_{ik}\overline {e_i}$. It follows from (\ref{Hmapdef}) that
\[
\s_{kl}=g(H(e_k),e_l), \quad \s_{k\bar l}=g(H(e_k),\overline{e_l} ).
\]
We have
\begin{equation}
H(e_k)=\s_{k \bar i}e_i+\s_{ki}\overline{e_i}.
 \label{2e}
\end{equation}

Since the frame is parallel, $\nabla_{\mathcal{N}} e_i=0$ for all $i$.
\begin{equation}
\begin{aligned}
(\nabla_{\mathcal{N}} H)(e_k)&=\nabla_{\mathcal{N}} (H(e_k))-H(\nabla_{\mathcal{N}} e_k)=\nabla_{\mathcal{N}} (\s_{k \bar i}e_i+\s_{ki}\overline {e_i})\\
&= \partial_t (\s_{k \bar i})e_i+\partial_t(\s_{ki})\overline {e_i}.
\end{aligned}\label{wd}
\end{equation}
\begin{equation}
\begin{aligned}
H^2(e_k)&=H(\s_{k \bar i}e_i+\s_{ki}\overline {e_i})=\s_{k \bar i}H(e_i)+\s_{ki}H(\overline {e_i})\\
&= \s_{k \bar i}(\s_{i \bar j}e_j+\s_{ij}\overline {e_j})+\s_{ki}\overline{(\s_{i \bar j}e_j+\s_{ij}\overline {e_j})}\\
&=(\s_{k \bar i}\s_{i \bar j}+\s_{ki}\overline{\s_{ij}})e_j+(\s_{k \bar i}\s_{ij}+\s_{ki}\overline{\s_{i \bar j}})\overline {e_j}.
\end{aligned}\label{rc}
\end{equation}
\begin{equation}
\begin{aligned}
g(R_{\mathcal{N}}(e_k), \overline{e_l})&=R(e_k,\mathcal{N},\mathcal{N}, \overline{e_l})=\frac{1}{2}R(e_k,e_n+\overline{e_n},e_n+\overline {e_n}, \overline {e_l})\\
&= \frac{1}{2}R(e_k, \overline{e_n}, e_n, \overline{e_l})=\frac{1}{2}R_{n \bar n k \bar l}.\\
\end{aligned}\label{gj}
\end{equation}

After adding equations (\ref{wd}), (\ref{rc}), and (\ref{gj}), we get
\begin{equation}
g((\nabla_{\mathcal{N}} H +H^2 +R_{\mathcal{N}})e_k, \overline {e_l})=\frac{\partial}{\partial t} (\s_{k \bar l})+\s_{k \bar i}\s_{i \bar l}+\s_{ki}\overline{\s_{il}}+\frac{1}{2}R_{n \bar n k \bar l}=0.
 \label{bi}
\end{equation}
Let $k=l=n$. Note that $\s_{n n}=-\s_{n \overline n}$ since $\nabla_{e_n} e_n+\nabla_{\overline{e_n}} \,e_n=0$. Therefore
\begin{equation}
     (2\s_{n \overline n})'+(2\s_{n \overline n})^2+\widetilde{R}_{n \overline{n} n  \overline{n}} \leq 0.    \label{he1}
\end{equation}
It follows from \cite[Lemma 2.2]{TY2012} that
 \begin{equation}\label{he2}
     \lim_{\s \to 0+}(2\s_{n \bar n}-\frac{1}{\s})=0.
 \end{equation}
 Then we have the following Hessian estimates, see also Tam-Yu \cite[Theorem 2.1 on p.488]{TY2012} for related results.
 \begin{proposition}\label{Hessian1}
    Let $(\M^{n},J,\g)$ be a complete K\"ahler manifold with complex dimension $n$, and $M^{n-r} \subset \widetilde{M}^n$ a compact complex submanifold with the induced metric $g$. Let $\s$ be the distance function of any point $q \in \M$ away from $M$. Let $e_n=\frac{1}{\sqrt 2}(\nabla\s -\sqrt{-1}J(\nabla\s))$. If $R_{n \overline n n \overline n}\ge -k(\s)$ holds on $\M \setminus (M \cup  Cut(M))$ for some function $k(t)$ with $0 \leq k(t) \in C^0[0, \infty)$, then we have
     \begin{equation}
         \s_{n \overline {n}}\le \frac{u'(\s)}{2u(\s)},  \ \ x \in \M \setminus (M \cup  Cut(M)). \label{he3}
     \end{equation}
     Here $u(t)$ is determined by (\ref{res}), i.e. $u(t) \in C^2[0, \infty)$ is the solution to
    $u''(t)-k(t)u(t)=0$ with the initial conditions $u(0)=0$ and $u'(0)=1$.
\end{proposition}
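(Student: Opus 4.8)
The plan is to integrate the scalar Riccati inequality (\ref{he1}) along a radial geodesic and compare it against the model equation (\ref{res}) via Lemma \ref{rct}. Fix an arbitrary $q \in \M \setminus (M \cup Cut(M))$ and let $\gamma:[0,\s(q)] \to \M$ be the unique unit-speed minimizing geodesic from $M$ to $q$, so that $\gamma'(t)=\mathcal{N}=\nabla\s$ and $\s(\gamma(t))=t$. Because $q \notin Cut(M)$, the segment $\gamma((0,\s(q)])$ meets neither $M$ nor the cut-focal locus, so $\s$ and the radial Hessian component $\s_{n\bar n}$ are smooth along it. I would set $f_1(t):=2\s_{n\bar n}(\gamma(t))$, viewed as a smooth function of $t$ on $(0,\s(q)]$.

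First I would extract the differential inequality for $f_1$. Along $\gamma$ the curvature hypothesis reads $R_{n\bar n n\bar n}(\gamma(t)) \ge -k(t)$, and substituting into (\ref{he1}) gives
\[
f_1'(t)+f_1(t)^2 \;\le\; -R_{n\bar n n\bar n}(\gamma(t)) \;\le\; k(t).
\]
I would then introduce the model function $f_2(t):=u'(t)/u(t)$, where $u$ solves (\ref{res}); differentiating and using $u''=k\,u$ gives $f_2'+f_2^2 = u''/u = k(t)$. Hence $f_1'+f_1^2 \le f_2'+f_2^2$ on $(0,\s(q)]$, which is the differential hypothesis of Lemma \ref{rct}.

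The one point requiring genuine care is the matching of the two functions at the singular endpoint $t\to 0^+$, where both blow up like $1/t$. For $f_1$ this is exactly the content of (\ref{he2}), which gives $f_1(t)-1/t \to 0$. For $f_2$ I would use the initial data $u(0)=0$, $u'(0)=1$ together with continuity of $k$: from $\frac{d}{dt}(tu'-u)=t\,u'' = t\,k\,u$ one gets $tu'(t)-u(t)=\int_0^t s\,k(s)\,u(s)\,ds = O(t^3)$, while $tu(t)\sim t^2$, so that $f_2(t)-1/t = (tu'-u)/(tu) \to 0$ as well. Therefore $\limsup_{t\to 0^+}(f_2(t)-f_1(t)) = 0 \ge 0$, the second hypothesis of Lemma \ref{rct}.

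With both hypotheses verified, Lemma \ref{rct} yields $f_1(t)\le f_2(t)$ on $(0,\s(q)]$; evaluating at $t=\s(q)$ gives $2\s_{n\bar n}(q) \le u'(\s(q))/u(\s(q))$, which is precisely the estimate (\ref{he3}). Since $q$ was arbitrary, this holds throughout $\M\setminus(M\cup Cut(M))$. I expect the only real subtlety to be the endpoint asymptotics just described: the substantive geometric input (the scalar Riccati inequality (\ref{he1}) and the blow-up rate (\ref{he2})) is already in hand, so what remains is an essentially one-dimensional ODE comparison whose delicacy lies entirely in reconciling the common $1/t$ singularity of $f_1$ and $f_2$ at $t=0$.
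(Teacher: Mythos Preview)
Your proposal is correct and follows essentially the same approach as the paper: both reduce to the Riccati comparison Lemma \ref{rct} applied to $f_1=2\s_{n\bar n}$ and $f_2=u'/u$, using (\ref{he1}) for the differential inequality and (\ref{he2}) together with the asymptotic $u'/u-1/t\to 0$ for the endpoint condition. The paper obtains the latter via a Taylor expansion with the mean value theorem (and cites Lemma \ref{eta} to ensure $u>0$), whereas you use the integral identity $tu'-u=\int_0^t sk(s)u(s)\,ds$; these are equivalent computations.
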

 \begin{proof}[Proof of Proposition \ref{Hessian1}]
 It follows from (\ref{res}) that
 \[
 (\frac{u'}{u})'+(\frac{u'}{u})^2-k=0.
 \]
 We solve the Taylor expansion near $t=0$ as
 \begin{equation}
    \frac{u'(t)}{u(t)}-\frac{1}{t}=\frac{tu'(t)-u(t)}{tu(t)}=\frac{t(1+u^{\prime\prime}(\xi_1)t)-(t+\frac{1}{2}u^{\prime\prime}(\xi_2)t^2)}{t^2+o(t^2)} \rightarrow \frac{1}{2}u''(0)=0.  \label{initial}
\end{equation}
     It is clear that (\ref{he3}) follows from (\ref{he1}), (\ref{he2}), (\ref{initial}), Lemma \ref{eta}, and Lemma \ref{rct}.
 \end{proof}

\subsubsection{Proof of dimension upper estimates}

\begin{proposition}[based on {\cite[Theorem 8]{Liu2016}}]\label{thrcir}

     Let $(\M^{n},J,\g)$ be a complete noncompact K\"ahler manifold and $(M^{n-r},g)$ be a compact K\"ahler submanifold with the induced metric. Let $\s$ be the distance function from $M$.  Let $e_n=\frac{1}{\sqrt{2}}(\nabla \s-\sqrt{-1}J\nabla\s)$ defined in $\M \setminus (M \cup Cut(M))$. For any given $r>0$, we define
  \[
  M_f(r)=\sup\{ |f(x)| \ \ |\ \ x\in T(M, r)\},\ \text{where}\ T(M, r)=\{q \in M\ |\ \s(q)<r\}.
  \]
  Suppose that $R_{n \overline n n \overline n}(x)\ge -k(\s(x))$ holds on $\M \setminus (M \cup Cut(M))$, where $0 \leq k(t) \in C^0[0, \infty)$. Let $h(t) \in C^2(0,+\infty)$ satisfies $h'(t)=\frac{1}{u(t)}$ with $u$ defined in (\ref{res}). Then $\forall f \in \mathcal{O}(\M)$ and any $0<\s_1 <\s_2 <\s_3$,
    \begin{equation}
        \log M_f(\s_2)\le \frac{h(\s_3)-h(\s_2)}{h(\s_3)-h(\s_1)}\log M_f(\s_1) + \frac{h(\s_2)-h(\s_1)}{h(\s_3)-h(\s_1)}\log M_f(\s_3).  \label{thr1}
    \end{equation}
\end{proposition}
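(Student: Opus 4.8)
The plan is to recognize that (\ref{thr1}) is precisely the statement that $\log M_f$, viewed as a function of $r$, becomes convex after the change of variable $r \mapsto h(r)$, and to prove this convexity by a Hadamard three-circle argument based on the maximum principle for the plurisubharmonic function $\log|f|$. Fix $0 < \s_1 < \s_3$ and consider the closed ``annular tube'' $A = \{x \in \M : \s_1 \le \s(x) \le \s_3\}$, which is compact since $\M$ is complete and $\s$ is proper (being the distance to the compact set $M$, so its sublevel sets are closed and bounded). Choose constants $A_0, B$ so that the radial comparison function $\beta(x) = A_0 + B\, h(\s(x))$ matches the endpoint data, $A_0 + B\, h(\s_i) = \log M_f(\s_i)$ for $i = 1, 3$; since $M_f$ is nondecreasing and $h$ is strictly increasing ($h' = 1/u > 0$ on $(0,\infty)$), we have $B \ge 0$. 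I would then aim to show $\phi := \log|f| - \beta \le 0$ throughout $A$: on the sphere $\{\s = \s_2\}$ this reads $\log|f| \le A_0 + B\, h(\s_2)$, and since subharmonicity of $\log|f|$ makes $M_f(\s_2)$ attained on that sphere, substituting the values of $A_0,B$ gives exactly (\ref{thr1}).

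Since $f$ is holomorphic, $\log|f|$ is plurisubharmonic and is pluriharmonic wherever $f \ne 0$, so $\phi$ cannot attain its maximum at a zero of $f$ (where $\log|f| = -\infty$). On the boundary spheres $\s = \s_1, \s_3$ one has $\phi \le 0$ by the choice of $A_0, B$, so the crux is to rule out an interior maximum. Suppose $\phi$ attains a positive maximum at an interior $x_0$ with $f(x_0) \ne 0$ and $x_0 \notin Cut(M)$, so $\s$ is smooth near $x_0$. At $x_0$ the complex Hessian of $\phi$ is negative semidefinite; reading off only the $(e_n, \overline{e_n})$ component and using that $\log|f|$ is pluriharmonic there gives $\beta_{n\overline n}(x_0) \ge 0$. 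With $h'(\s) = 1/u(\s)$, $h''(\s) = -u'(\s)/u(\s)^2$, and $\s_n \s_{\overline n} = \tfrac12$ (as $|\nabla\s|=1$ forces $e_n(\s) = \tfrac{1}{\sqrt2}$), this becomes
\[
\beta_{n\overline n}(x_0) = B\Big(\frac{\s_{n\overline n}(x_0)}{u(\s(x_0))} - \frac{u'(\s(x_0))}{2\,u(\s(x_0))^2}\Big) \ge 0,
\]
which forces $\s_{n\overline n}(x_0) \ge \tfrac{u'}{2u}$, the reverse of the Hessian bound (\ref{he3}). To upgrade this to a strict contradiction I would build the comparison from a slightly enlarged curvature bound: replace $k$ by $k_\epsilon = k + \epsilon \rho$ for a fixed $0 \le \rho \in C^0[0,\infty)$ with $\int_0^\infty s\rho(s)\,ds < \infty$, let $u_\epsilon$ solve (\ref{res}) with $k_\epsilon$, and use $h_\epsilon$ (with $h_\epsilon' = 1/u_\epsilon$) in $\beta$. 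By Sturm comparison $u_\epsilon'/u_\epsilon > u'/u$ strictly, whereas Proposition \ref{Hessian1} applied with the genuine bound $k$ still yields $\s_{n\overline n} \le \tfrac{u'}{2u} < \tfrac{u_\epsilon'}{2u_\epsilon}$; this contradicts the displayed inequality at $x_0$. Letting $\epsilon \to 0$ (so $h_\epsilon \to h$ locally uniformly) then recovers $\phi \le 0$ with the original $h$.

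The main technical obstacle is the cut-focal locus $Cut(M)$, where $\s$ is merely Lipschitz and the pointwise Hessian computation is unavailable. If the maximum of $\phi$ falls on $Cut(M)$, I would invoke Calabi's support-function trick adapted to the submanifold setting: along a minimizing normal geodesic from $M$ to $x_0$, push the foot point slightly inward to obtain a smooth upper barrier $\overline{\s} \ge \s$ realized as the distance to an interior point, with $\overline{\s}(x_0) = \s(x_0)$; since $h_\epsilon$ is increasing, $\phi \le \log|f| - A_0 - B\, h_\epsilon(\overline{\s})$ with equality at $x_0$, and the smooth barrier obeys the same Hessian comparison, so the argument above applies verbatim. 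The remaining points—compactness of $A$ guaranteeing that the maximum is attained, and the convergence $u_\epsilon \to u$ justifying the limit $\epsilon \to 0$—are routine consequences of completeness and continuous dependence on $k$. With interior maxima excluded and the boundary inequality in hand, the maximum principle gives $\phi \le 0$ on $A$, which is (\ref{thr1}).
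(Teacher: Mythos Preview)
Your overall strategy matches the paper's: a maximum-principle argument on the annular tube, using the Hessian bound of Proposition~\ref{Hessian1} together with a perturbation for strictness and the Calabi support trick at cut-focal points. Two points need correction. First, the barrier inequality in your Calabi step is reversed: since $\overline{\s}\ge\s$, $h_\epsilon$ is increasing, and $B\ge 0$, one gets $\phi \ge \log|f|-A_0-B\,h_\epsilon(\overline{\s})$ rather than $\le$; this is precisely the direction needed so that a maximum of $\phi$ at $x_0$ forces a local maximum of the smooth barrier there. Second, the barrier does not obey ``the same Hessian comparison'' verbatim. Writing $\overline{\s}=\widehat{s}+\delta$ with $\widehat{s}$ the distance from the shifted foot point $p_\delta$, the curvature bound along the geodesic from $p_\delta$ reads $R_{n\bar n n\bar n}\ge -k(\widehat{s}+\delta)$, so Riccati comparison only gives $\widehat{s}_{n\bar n}(x_0)\le V'(a-\delta)/(2V(a-\delta))$ where $V$ solves $V''=k(\cdot+\delta)V$. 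You then need a continuous-dependence step $V\to u$ as $\delta\to 0$ (the paper carries this out explicitly); your strict gap $u_\epsilon'/u_\epsilon>u'/u$ from Sturm comparison does absorb it once $\delta$ is chosen small relative to $\epsilon$, but this must be said. As a minor methodological difference, the paper's perturbation keeps $k$ fixed and instead replaces $h$ by $h_\epsilon=\log(e^h-\epsilon)$, which yields $(h_\epsilon(\s))_{n\bar n}<0$ strictly by an elementary calculation, bypassing the Sturm step.
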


\begin{remark}
    (\ref{thr1}) is called a three-circle type inequality and was first proved by Liu \cite[Theorem 8 on p.2913]{Liu2016} as a generalization of \cite[Theorem 1]{Liu2016} which deals with K\"ahler manifolds with nonnegative holomorphic sectional curvature. Liu's beautiful results have found many applications in the study of complete K\"ahler manifolds with nonnegative bisectional curvature. (\ref{thr1}) was further generalized to almost Hermitian manifolds by Yu-Zhang in \cite[Theorem 1.3 on p.184]{YZ2022}, where applications to Liouville type theorems on almost Hermitian manifolds were also derived. The main difference of Proposition \ref{thrcir} with results in \cite{Liu2016} and \cite{YZ2022} is that we work with the distance to a closed submanifold. We also follow {\cite[Theorem 8]{Liu2016}} to add some detailed explanation on the Calabi trick which is used to deal with cut-focus points.
\end{remark}

\begin{proof}[Proof of Proposition \ref{thrcir}]

For fixed $\s_1 <\s_3$, we may assume $M_f(\s_3)>M_f(\s_1)>0$. Now we introduce
    \begin{align}
        F(x)&=[h(\s_3)-h(\s(x))]\log M_f(\s_1) + [h(\s(x))-h(\s_1)]\log M_f(\s_3), \label{eqf} \\
        G(x)&=(h(\s_3)-h(\s_1))\log |f(x)|.   \label{eqg}
    \end{align}
    Then $G(x) \le F(x)$ when $\s(x)=\s_1$ or $\s(x)=\s_3$. Our goal is to apply the maximum principle to some perturbation of $G-F$ inside the region $T(M,\s_3) \setminus \overline{T(M,\s_1)}$.

    Let $h_{\varepsilon}=\log(e^h-\varepsilon)$ and we define $F_{\varepsilon}$ and $G_{\varepsilon}$ by replacing $h$ in (\ref{eqf}) and (\ref{eqg}) with $h_{\varepsilon}$ respectively. Assume that $G_{\varepsilon}-F_{\varepsilon}$ gets a positive maximum at some point $x_{\varepsilon} \in T(M,\s_3) \setminus \overline{T(M,\s_1)}$. We may assume $f(x_{\varepsilon}) \neq 0$. It follows that $\sqrt{-1}\partial \bar \partial \log |f(x)|_{x=x_{\varepsilon}}=0$.

    \textbf{Case 1:} Suppose $x_{\varepsilon}$ is not a cut-focal point of $M$. We may compute
    \begin{equation}
        h(\s(x))_{n \overline n}=h''(\s)\s_n \s_{\overline n}+h'(\s)\s_{n \overline n}=-\frac{u'}{2u^2}+\frac{1}{u}\s_{n \overline n} \le 0.    \label{hessianh1}
    \end{equation}
    Note that in the last inequality we use $(\ref{he3})$. Similarly we solve
    \begin{equation}
        h_{\varepsilon}(\s(x))_{n \overline n}|_{x=x_{\varepsilon}}=(1+\frac{\varepsilon}{e^h-\varepsilon})h(\s(x))_{n \overline n}-\frac{\varepsilon e^h (h')^2}{2(e^h-\varepsilon)^2} <0.
        \label{hessianh2}
    \end{equation}
    It follows that
    \[
    (G_{\varepsilon}-F_{\varepsilon})_{n \bar n}|_{x=x_{\varepsilon}}= -h_{\varepsilon}(\s(x))_{n \bar n}|_{x=x_{\varepsilon}} (\log M_f(\s_3)-\log M_f(\s_1))>0.
    \]
    This is a contradiction as $G_{\varepsilon}-F_{\varepsilon}$ achieve a positive maximum at $x_{\varepsilon}$.

    \textbf{Case 2:} Suppose  $x_{\varepsilon}$ is a cut-focal point of $M$. There exists a (not unique) minimal geodesic $\gamma(t):[0,a] \to \M$ connecting $\gamma(a)=x_{\varepsilon}$ and a point $\gamma(0)=p \in M$. For any $\delta>0$, let $p_{\delta}=\gamma(\delta)$ and $\widehat{s}$ be the distance function from $p_{\delta}$. By the triangle inequality,
    \begin{align*}
        \s(x) \le \widehat{s}(x)+\delta, \ \ \s(x_{\varepsilon})=\widehat{s}(x_{\varepsilon})+\delta.
    \end{align*}
    We define a support function of $F_{\varepsilon}$ in the following
    \begin{equation*}
        F_{\varepsilon,\delta}(x)=[h_{\varepsilon}(\s_3)-h_{\varepsilon}(\widehat{s}(x)+\delta)]\log M_f(\s_1) + [h_{\varepsilon}(\widehat{s}(x)+\delta)-h_{\varepsilon}(\s_1)]\log M_f(\s_3).
    \end{equation*}
    It follows that $G_{\varepsilon}-F_{\varepsilon,\delta} \le G_{\varepsilon}-F_{\varepsilon}$ and $(G_{\varepsilon}-F_{\varepsilon,\delta})(x_{\varepsilon}) = (G_{\varepsilon}-F_{\varepsilon})(x_{\varepsilon})$. Then $G_{\varepsilon}-F_{\varepsilon,\delta}$ gets a local maximum at $x_{\varepsilon}$. As $x_{\varepsilon}$ is not a cut point of $p_{\delta}$, $f_n=\frac{1}{\sqrt{2}}(\nabla \widehat{s}-J \nabla \widehat{s})$ is well-defined in a neighborhood of $\gamma(t)$ with $t \in (\delta, a]$. We claim that for $\delta>0$ small enough,
    \begin{equation}
        (G_{\varepsilon}-F_{\varepsilon,\delta})_{f_n \overline{f_n}}(x_{\varepsilon})>0,   \label{hessianclaim}
    \end{equation}
    which leads to a contradiction again.

    From (\ref{he1}), we have
    \begin{equation}
        (2\widehat{s}_{n\bar{n}})^{\prime}+(2\widehat{s}_{n\bar{n}})^2+R_{n\bar{n}n\bar{n}}(\gamma(t+\delta)) \leq 0,     \label{hatsricatti}
    \end{equation}
   where
   \begin{equation}
       R_{n\bar{n}n\bar{n}}(\gamma(t+\delta)) \geq -k(\widetilde{s}(\gamma(t+\delta)))\ \text{for any}\ t \in (0, a-\delta].   \label{newlbound}
   \end{equation}
   Indeed, we note that the vector fields $f_n$ and $e_n$ coincide along $\gamma(t)$ with $t \in (\delta, a)$. So the curvature bound (\ref{newlbound}) still holds when $x_{\varepsilon}=\gamma(a)$ after we take limit $t \rightarrow (a-\delta)-$.
   We consider two initial value problems to ODEs on $[0, a-\delta]$
   \begin{align}
   &u^{\prime\prime}(t)-k(t)u(t)=0,\ \ u(0)=0,\ u'(0)=1,   \ \ \text{which is exactly}\ (\ref{res});\\
   &V^{\prime\prime}(t)-k(t+\delta)V(t)=0,\ \ V(0)=0,\ V'(0)=1.
   \end{align}
    By the continuous dependence property of ODE solutions with respect to coefficients, there exists some constant $\lambda(\delta)$ with $\lim_{\delta \rightarrow 0} \lambda(\delta)=0$ so that
     \begin{equation}
         \sup_{[0, a-\delta]} (|u(t)-V(t)|+|u'(t)-V'(t)|) \leq \lambda(\delta).   \label{ODEcompare}
     \end{equation}
     It follows from (\ref{ODEcompare}) that
     \begin{equation}
     \frac{u'(a-\delta)-\lambda(\delta)}{u(a-\delta)+\lambda(\delta)}  \leq  \frac{V'(a-\delta)}{V(a-\delta)} \leq \frac{u'(a-\delta)+\lambda(\delta)}{u(a-\delta)-\lambda(\delta)}.
     \end{equation}
     Note that $\frac{V'(t)}{V(t)}$ satisfies the following Ricatti type equation
    \begin{equation}
        (\frac{V'(t)}{V(t)})^{\prime}+(\frac{V'(t)}{V(t)})^2-k(t+\delta)=0.  \label{Vricatti}
    \end{equation}
    We may apply Lemma \ref{rct} on (\ref{hatsricatti}) and (\ref{Vricatti}) to conclude that
    \begin{equation}
        \widehat{s}_{f_n\overline{f_n}}|_{x_{\varepsilon}} \leq \frac{V'(a-\delta)}{2V(a-\delta)} \leq \frac{u'(a-\delta)+\lambda(\delta)}{2(u(a-\delta)-\lambda(\delta))}.
    \end{equation}
    Compared with (\ref{hessianh1}), we arrive at
    \begin{equation}
        h(\widehat{s}+\delta)_{f_n \overline{f_n}}|_{x=x_{\varepsilon}}=-\frac{u'}{2u^2}+\frac{1}{u} \widehat{s}_{f_n \overline{f_n}}
        \leq  -\frac{u'(a)}{2(u(a))^2}+\frac{u'(a-\delta)+\lambda(\delta)}{2u(a)[u(a-\delta)-\lambda(\delta)]}.
    \end{equation}
    It follows from (\ref{hessianh2}) that for any fixed $\epsilon>0$, we may pick $\delta>0$ small enough so that
    \begin{equation}
        h_{\varepsilon}(\widehat{s}(x)+\delta)_{f_n \overline{f_n}}|_{x=x_{\varepsilon}}=(1+\frac{\varepsilon}{e^h-\varepsilon})h(\widehat{s}+\delta)_{f_n \overline{f_n}}|_{x=x_{\varepsilon}}-\frac{\varepsilon e^h (h')^2}{2(e^h-\varepsilon)^2} <0.    \label{hessianh3}
    \end{equation} So the claim (\ref{hessianclaim}) is proved.

    To sum up, $G_{\varepsilon}(x) \leq F_{\varepsilon}(x)$ holds, so does (\ref{thr1}) after we take $\varepsilon \to 0$.

\end{proof}

\begin{proof}[Proof of Proposition \ref{upperdim}]
Let $X$ be the total space of a line bundle $\pi: L \rightarrow M$ over a compact K\"ahler manifold $M$. Assume $X$ admits a complete K\"ahler metric $\widetilde{g}$ which satisfies \ref{AssumeA}. In below the constants $C$ may differ from line to line.

For any $f \in \mathcal{O}_d(X)$, we have $M_f(\s)\le C\s^d$ for some constant $C>0$. If $\int_0^{+\infty} sk(s)\,ds < \infty$, it follows from Lemma \ref{eta} that
\begin{align}
\frac{1}{\eta}\log \s+C_1 \le h(\s) \le \log\s+C_2.    \label{hbound}
\end{align}
After taking $\s_3 \to +\infty$ in $(\ref{thr1})$, we arrive at
\begin{align}
   \log M_f(\s_2) &\le \log M_f(\s_1) + (h(\s_2)-h(\s_1))\limsup_{\s \to +\infty} \frac{\log M_f(\s)}{h(\s)}  \nonumber\\
   &\le \log M_f(\s_1) + (h(\s_2)-h(\s_1))\eta d.   \label{thr2}
\end{align}
It follows that $\dfrac{M_f(\s)}{e^{\eta d h(\s)}}$ is non-increasing with respect to $\s \in (0, \infty)$. Argue similarly as in (\ref{initial}), we have
\[
h'(t)-\frac{1}{t}=\frac{t-u(t)}{tu(t)}=\frac{-\frac{u^{\prime\prime}(\xi_1)}{2}+O(t)}{1+\frac{u^{\prime\prime}(\xi_2)}{2}t+O(t^2)}=o(1).
\]
After integration, we have
\begin{equation}
    h(\s)=\log\s+C+o(\s),
\end{equation}
and
\begin{equation}
    \lim_{\s \to 0} \frac{e^{\eta d h(\s)}}{\s^{\eta d}}=C>0.
\end{equation}
It follows from the monotonicity of $\dfrac{M_f(\s)}{e^{\eta d h(\s)}}$ that
    \begin{equation}
        \lim_{\s \to 0} \frac{M_f(\s)}{\s^{\eta d}}=C>0.   \label{vanish1}
    \end{equation}
The point is to show that (\ref{vanish1}) implies that the vanishing order of $f$ along $M$ is bounded from above by $\lfloor \eta d \rfloor$.

To that end, let $\varphi_i: \pi^{-1}(U_i) \rightarrow U_i \times \mathbb{C}$ be the local trivialization of  $\pi: L \rightarrow M$, and $v=p_2 \circ \varphi_i$ where $p_2$ is the projection to the second factor. Any holomorphic function on $X$ admits a Taylor expansion as in (\ref{taylor}). By the vanishing order of $f$ along $M$ we mean the lowest order of $v$ term in the Taylor expansion of $f$. Now we observe there exists some $\s_0>0$ so that
\begin{align}
|v|<C \s\  \ \text{for}\ \ \s < \s_0.    \label{vanish2}
\end{align}
Let $(x_1, x_2, \cdots, x_{2n-1}, x_{2n})$ denote the Fermi coordinates on $X$. Then we have $\s^2=x_{2n-1}^2+x_{2n}^2$. We refer to \cite[Chapter 2]{Gray} for a detailed discussion on Fermi coordinates.

If we write $v=v_1+\sqrt{-1}v_2$, then
\begin{align*}
    v_1&=\alpha_1(x_1, \cdots, x_{2n-1}) x_{2n-1}+ \beta_1(x_1, \cdots, x_{2n-1})x_{2n}+O(x_{2n-1}^2+x_{2n}^2),\\
    v_2&=\alpha_2(x_1, \cdots, x_{2n-1}) x_{2n-1}+ \beta_2(x_1, \cdots, x_{2n-1})x_{2n}+O(x_{2n-1}^2+x_{2n}^2).
\end{align*}
Then (\ref{vanish2}) follows and
\begin{equation}
        \liminf_{\s \to 0} \frac{M_f(\s)}{|v|^{\eta d}}>0.   \label{vanish3}
\end{equation}
In view of the Taylor expansion (\ref{taylor}), the desired dimension estimates (\ref{dimcmain1}) follows.

Suppose
\begin{equation}
\operatorname{dim} \mathcal{O}_d(X, \widetilde{g}) = \sum_{k=0}^{\lfloor \eta d \rfloor} \operatorname{dim} H^0(M, L^{-k}).   \label{vanish4}
\end{equation}
holds along a infinite sequence $\{d_j\}_{j=1}^{\infty} \subset \mathbb{R}^{+}$. By Lemma \ref{eta}, we have $1 \leq u'(t) \leq \eta=e^{\int_0^{\infty} tk(t)dt}$ and $\lim_{t \rightarrow \infty} u'(t) \leq \eta$. We claim that (\ref{vanish4}) forces the latter inequality becomes an equality. Hence $k(t)$ is identically zero on $[0, \infty)$ by Lemma \ref{eta}. Otherwise, there exists some $\delta>0$ small enough so that $\lim_{t \rightarrow \infty} u'(t) \leq \eta-\delta$. As a result, (\ref{hbound}) still holds after we replace $\eta$ by $\eta-\delta$. Following the previous argument which leads to (\ref{vanish3}), we get that the vanishing order of any $f \in \mathcal{O}_d(X)$ can be bounded by $\lfloor (\eta-\delta) d \rfloor$. Compared with (\ref{vanish4}), we get
\begin{equation}
\sum_{k=0}^{\lfloor \eta d_j \rfloor} \operatorname{dim} H^0(M, L^{-k})= \operatorname{dim} \mathcal{O}_{d_j}(X, \widetilde{g}) \leq \sum_{k=0}^{\lfloor (\eta-\delta) d_j \rfloor} \operatorname{dim} H^0(M, L^{-k}) .   \label{vanish5}
\end{equation}
It is a contradiction after we take $d_j \rightarrow \infty$.
\end{proof}

\begin{remark}
    In Lemma \ref{eta}, we may relax $\int_0^{\infty} sk(s)\,ds <\infty$ by $\int_0^t sk(s)\,ds \le K(t)$, then we get $u'(t) \le e^{K(t)}$. In Proposition \ref{thrcir} we choose $h(t)$ correspondingly as
    \begin{equation}
         h'(t) \ge \frac{1}{\exp(\int_0^te^{K(s)}\,ds)}.   \label{hdefgen}
    \end{equation}
    For instance, if $k(s)=\frac{\varepsilon}{s^2\log s}$ for $s>0$ large and $0<\varepsilon<1$, then we have $h(t)\ge C_1 (\log t)^{1-\varepsilon}+C_2$.
    The proofs of Propositions \ref{thrcir} and \ref{upperdim} imply the following Liouville property.

    Assume that $(X, \widetilde{g})$ satisfies the assumption of Proposition \ref{upperdim} except that $k(s)$ is replaced by $k(s)=\frac{\varepsilon}{s^2\log s}$ with $0<\varepsilon<1$ and $s$ large enough. Then any $f \in \mathcal{O}(X)$ satisfying
    \begin{align}
     \lim_{\s \to +\infty}\frac{\log M_f(\s)}{(\log \s)^{1-\varepsilon}}=0   \label{Lgrowth1}
    \end{align}
    must be constant. The statement follows after we take $\s_3 \to +\infty$ in $(\ref{thr1})$. This type of Liouville theorem for holomorphic (and plurisubharmonic) function was considered by Takegoshi (\cite{Take1990} and \cite{Take1993}) when $X$ is a complete K\"ahler manifold with a pole.

    It is interesting to note the role of the value of $\varepsilon$ in $k(s)=\frac{\varepsilon}{s^2\log s}$. If $\varepsilon=1$, i.e. $k(s)=\frac{1}{s^2\log s}$ for $s>0$ large, then $h(t)\ge \log\log t+C$ by (\ref{hdefgen}). Then the above statement holds if we replace (\ref{Lgrowth1}) by
    \begin{align}
    \lim_{\s \to +\infty}\frac{\log M_f(\s)}{\log\log \s}=0.  \label{Lgrowth2}
    \end{align}
    However, if $\varepsilon>1$ i.e. $k(s)=\frac{\varepsilon}{s^2\log s}$ for $s>0$ large, the lower bound of $h$ (defined by (\ref{hdefgen})) cannot tend to $\infty$ as $s \rightarrow \infty$. We fail to get any Liouville property by Proposition \ref{thrcir}. Such a dichotomy matches Milnor's result \cite{Milnor} on rotationally symmetric simply connected noncompact Riemann surfaces.
\end{remark}

\subsection{Dimension estimates: lower bounds}

We begin with a lemma from the beautiful work of Li-Tam \cite{LT1991}. Let $g=u(dx^2+dy^2)$ be a complete K\"ahler metric on a complex plane $\mathbb{C}=\{\,z=x+y\sqrt{-1}\,\}$. Assume that its Gauss curvature $K=-\dfrac{\Delta \log u}{2u}\le 0$, i.e. $\log u$ is subharmonic. Let $B_g(O,r)$ be the geodesic ball with radius $r$ centered at the origin and $B(O,r)$ be the Euclidean ball with the same radius.

\begin{lemma}[Li-Tam {\cite[p.147]{LT1991}}]\label{sup-inf}
Define $s(r)=\sup_{z\in \partial B_g(O,r)}|z|$ and $i(r)=\inf_{z\in \partial B_g(O,r)}|z|$. There exists some $r_0>0$ and some $\delta \in C^0([r_0, \infty))$ with $0 \leq \delta(r) \leq 2\pi$ and $\lim_{r \rightarrow \infty} \delta(r)=0$ such that $B(O, e)\subset B_g(O, r_0)$. Moreover, for any $R>r>r_0$, we have
\begin{equation}
        \log s(r) \le \exp\Big(\frac{\sqrt{\delta(r)}} {2\sqrt{\int_r^R \frac{1}{L(t)}dt}}\Big)  \log i(R).
        \label{Cbound0}
\end{equation}
Here $L(t)$ denotes the length of $\partial B_g(O, r)$.
\end{lemma}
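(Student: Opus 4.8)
The plan is to reduce everything to the single harmonic function $w=\log|z|$. In real dimension two the Laplacian is conformally invariant, so from $\Delta_{\mathrm{euc}}\log|z|=2\pi\delta_O$ one gets that $w$ is $g$-harmonic on $\mathbb{C}\setminus\{O\}$ with a logarithmic pole at $O$, and $s(r)=\exp(\max_{\partial B_g(O,r)}w)$, $i(r)=\exp(\min_{\partial B_g(O,r)}w)$. First I would fix $r_0$ with $\overline{B(O,e)}\subset B_g(O,r_0)$ — possible because the complete balls $B_g(O,t)$ exhaust $\mathbb{C}$ — so that $w\ge 1$ on $\{\rho\ge r_0\}$, where $\rho$ denotes the $g$-distance to $O$; this is exactly what makes the outer logarithms $\log s(r),\log i(R)$ positive and the statement meaningful. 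Since $K\le 0$ and $\mathbb{C}$ is simply connected, Cartan–Hadamard gives no conjugate or cut-focal points, so $\rho$ is smooth off $O$, each $\partial B_g(O,r)$ is a smooth Jordan curve, and in geodesic polar coordinates $g=d\rho^2+J^2\,d\alpha^2$ with $\partial_\rho^2 J=-KJ\ge 0$ and $L(t)=\int_0^{2\pi}J(t,\alpha)\,d\alpha$. In particular the Calabi support-function device used in Proposition \ref{thrcir} is unnecessary here.

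Writing $W(t)=\max_{\partial B_g(O,t)}w=\log s(t)$ and $m(t)=\min_{\partial B_g(O,t)}w=\log i(t)$, the maximum principle on the exterior $\mathbb{C}\setminus B_g(O,t)$ shows that both $W$ and $m$ are nondecreasing, and the flux identity $\oint_{\partial B_g(O,t)}\partial_\nu w\,d\ell_g=2\pi$ holds for every $t$. Setting $v=\log w=\log\log|z|\ge 0$, a direct computation from $\Delta_g w=0$ gives $\Delta_g v=-|\nabla_g v|_g^2$. After squaring, the desired inequality reads $4\big(\int_r^R L(t)^{-1}\,dt\big)\,\big(\max_{\partial B_g(O,r)}v-\min_{\partial B_g(O,R)}v\big)^2\le\delta(r)$, i.e. a product of an extremal distance and a squared potential drop, bounded by a geometric quantity $\delta(r)$.

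The core is then a length–area estimate for $v$ on the annulus $\Omega_{r,R}=B_g(O,R)\setminus\overline{B_g(O,r)}$. On one side, the coarea formula in $\rho$ together with Cauchy–Schwarz on each geodesic circle, $\operatorname{osc}_{\partial B_g(O,t)}v\le\tfrac12\oint|\partial_T v|\,d\ell_g\le\tfrac12 L(t)^{1/2}\big(\oint|\partial_T v|^2\,d\ell_g\big)^{1/2}$, feeds the factor $\int_r^R L(t)^{-1}\,dt$ and the square into a lower bound for the Dirichlet energy $\int_{\Omega_{r,R}}|\nabla_g v|_g^2\,dA_g$. On the other side, integrating $\Delta_g v=-|\nabla_g v|_g^2$ identifies this energy with the flux drop $\Phi_v(r)-\Phi_v(R)$, where $\Phi_v(t)=\oint_{\partial B_g(O,t)}\partial_\nu v\,d\ell_g$, which I would bound from above using the $2\pi$-flux of $w$ together with the Gauss–Bonnet identity $\oint_{\partial B_g(O,t)}k_g\,d\ell_g=2\pi-\int_{B_g(O,t)}K\,dA$. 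Under finite total curvature (the standing Li–Tam hypothesis, matching $\int_0^\infty sk(s)\,ds<\infty$) the exterior total curvature $\delta(r):=-\int_{\mathbb{C}\setminus B_g(O,r)}K\,dA$ is finite, lies in $[0,2\pi]$ for $r\ge r_0$, and tends to $0$; it measures the failure of the geodesic circles to be round, and it is precisely this quantity that controls the flux drop. Assembling the two sides and integrating from $r$ to $R$ yields (\ref{Cbound0}), while $\delta(r)\to 0$ delivers the quasi-roundness at infinity.

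The main obstacle is the clean coupling in the energy bound: showing that the Dirichlet energy of $v$ — equivalently the defect between the maximum and the minimum of $\log|z|$ over geodesic circles — is governed by the exterior total curvature $\delta(r)$ rather than by a cruder quantity such as $1/m(r)$, and that the circle geometry enters exactly through $\int_r^R L(t)^{-1}\,dt$ with the square root produced by Cauchy–Schwarz. Controlling the sign and size of $\partial_\nu w$ on genuinely non-round circles, and extracting the sharp constant $\tfrac12$, is the delicate point; here I would follow Li–Tam \cite{LT1991} closely, using Gauss–Bonnet to convert the non-roundness into total curvature and the $2\pi$-flux normalization to pin down the constants.
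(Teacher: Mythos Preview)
Your skeleton matches the paper's proof closely: set $v=\log\log|z|$, apply Cauchy--Schwarz on each geodesic circle to get the oscillation bound, integrate against $L(t)^{-1}$ via coarea, and use monotonicity of $s(t)$ and $i(t)$ to pull the maxima and minima outside the $t$-integral. That part is fine.

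The genuine gap is your choice of $\delta(r)$ and the mechanism you propose to bound it. You set $\delta(r):=-\int_{\mathbb{C}\setminus B_g(O,r)}K\,dA$ and plan to reach it by combining the flux identity for $w$ with Gauss--Bonnet. This has three problems. First, the lemma as stated assumes only $K\le 0$, not finite total curvature; your $\delta(r)$ need not be finite, and the lemma still has to hold. Second, even under finite total curvature there is no reason the exterior total curvature is bounded by $2\pi$ (Cohn--Vossen goes the other direction), so the stated inequality $0\le\delta(r)\le 2\pi$ would fail for your definition. Third, the link you sketch between the flux drop $\Phi_v(r)-\Phi_v(R)$ and geodesic curvature via Gauss--Bonnet is not made precise; $\partial_\nu v=\partial_\nu w/w$ has nothing obviously to do with the geodesic curvature of $\partial B_g(O,t)$, so the ``non-roundness $\to$ total curvature'' conversion is unsubstantiated.

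The paper's $\delta(r)$ is instead the exterior Dirichlet energy of $v$ itself,
\[
\delta(r)=\int_{\mathbb{C}\setminus B_g(O,r)}|\nabla_g v|^2\,dV_g,
\]
and the whole point is that this integral is a \emph{conformal invariant} in real dimension two. One computes it in the Euclidean metric:
\[
\int_{\mathbb{C}\setminus B(O,e)}|\nabla v|^2\,dxdy=2\pi\int_e^\infty\frac{1}{t(\log t)^2}\,dt=2\pi,
\]
which immediately gives $0\le\delta(r)\le 2\pi$ and $\delta(r)\to 0$, with no curvature hypothesis beyond $K\le 0$ (needed only for the monotonicity of $s$ and $i$, not for the energy bound). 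Replace your Gauss--Bonnet detour with this conformal-invariance computation and the argument closes.
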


\begin{proof}[Proof of Lemma \ref{sup-inf}]
The estimate (\ref{Cbound0}) is derived in the course of the proof of the main result {\cite[Theorem 2.5]{LT1991}}. For the sake of convenience we include a proof.

Given any $C^1$ function $f$, we apply the Cauchy-Schwarz inequality
\[
\Big(\int_{\partial B_g(O, t)}|\nabla f|dL\Big)^2\le L(t)\Big(\int_{\partial B_g(O, t)}|\nabla f|^2dL\Big).
\]
Then we integrate both sides from $r$ to $R$ after divided by $L(t)$
\begin{equation}
\int_r^R \Big(\int_{\partial B_g(O, t)}|\nabla f|dL\Big)^2\frac{1}{L(t)}\,dt\le \int_r^R \Big(\int_{\partial B_g(O, t)}|\nabla f|^2dL\Big)\,dt\le \int_{\C \setminus B_g(O, r)} |\nabla f|^2 dV_g.
\label{Cbound1}
\end{equation}

There exist $x,y \in \partial B_g(O, t)$ such that $f(x)=\sup_{\partial B_g(O, t)} f$ and
$f(y)=\inf_{\partial B_g(O, t)} f$. $x$ and $y$ divide $\partial B_g(O, t)$ into two curves and we have
\begin{equation}
2(f(x)-f(y))\le \int_{\partial B_g(O, t)}|\nabla f|dL.  \label{Cbound2}
\end{equation}

Next we choose $f=\log \log |z|$. Note that $f \in C^{\infty} (\mathbb{C} \setminus B(O, e))$ and
\[
\int_{\C \setminus B(O,e)} |\nabla f|^2 dxdy=2\pi \int_e^{\infty} \frac{1}{t\log^2 t}\,dt=2\pi.
\]
Notice that $\int_{\mathbb{C}}|\nabla f|^2 dV_g$ is a conformal invariant of $g$. There exists some $r_0>0$ and some $\delta \in C^0([r_0, \infty))$ with $0 \leq \delta(r) \leq 2\pi$ and $\lim_{r \rightarrow \infty} \delta(r)=0$ with the property that
\begin{equation}
    \int_{\C \setminus B_g(O,r)} |\nabla f|^2 dV_g \le \delta(r).  \label{deltadef}
\end{equation}
It follows from (\ref{Cbound1}) and (\ref{Cbound2}) that
\begin{equation}
4\int_r^R (\log\log s(t)-\log\log i(t))^2\frac{dt}{L(t)} \le \delta(r).  \label{Cbound3}
\end{equation}
Given any $0<t_1<t_2$. As $\ln|z|$ is harmonic on away from the origin, we may apply the maximum principle on $B_g(O, t_2) \setminus B_g(O, t_0)$ for $t_0$ sufficiently small. Then we get $s(t_1) \leq s(t_2)$, which implies that $s(t)$ is increasing on $t$. Similarly, we may consider $B_g(O, t_3) \setminus B_g(O, t_1)$ for some $t_3$ large enough to get $i(t_1) \leq i(t_2)$, i.e. $i(t)$ is also increasing on $t$. Then we get
\begin{equation}
4(\log\log s(r)-\log\log i(R))^2\int_r^R \frac{dt}{L(t)} \le \delta(r).  \label{Cbound4}
\end{equation}
The desired estimate (\ref{Cbound0}) follows from (\ref{Cbound4}).
\end{proof}

\begin{proof}[Proof of Proposition \ref{lowerdim}]

Recall $(X, \widetilde{g})$ satisfies \ref{AssumeA} and \ref{AssumeB}. And the holomorphic sectional curvature satisfies $-k(\widetilde{s}) \leq R(e_n, \overline{e_n}, e_n, \overline{e_n}) \leq 0$ on $(0, \infty)$. Moreover, for each $p \in M$, $\pi^{-1}(p)$ is totally geodesic in $(X, \widetilde{g})$.

Under these assumptions, the Gauss curvature of the induced metric $g_p=\widetilde{g}|_{\pi^{-1}(p)}$ on each $\pi^{-1}(p)$ satisfies $-k(s) \leq K(g) \leq 0$. From now on, we work on $(\pi^{-1}(p), g_p)$. We also write $g$ for short, assuming the notation is clear from context. Let $p$ denote the origin of $\pi^{-1}(p)$, $A(t)$ the area of $B_g(p, t)$, $L(t)$ the length of $\partial B_g(p, t)$
\begin{align}
s(t)=\sup_{v\in \partial B_{g}(p,t)}|v|,\ \ \ \ i(t)=\inf_{v\in \partial B_g(p,t)}|v|.   \label{supinfdef}
\end{align} where $v$ denote holomorphic coordinates in the fiber direction for any point in $\pi^{-1}(p)$. Let $B(p, t)$ stand for the disk $\{|v|<t\} \subset \pi^{-1}(p)$. Under those notations, we may also write $g_p=u |dv|^2$ for some smooth function $u$. Using standard results on comparison geometry (Lemma \ref{eta} and \ref{rct}), we have
\begin{equation}
    2\pi t  \leq  L(t) \leq  \eta 2\pi t,\ \ \pi t^2  \leq  A(t) \leq  \eta \pi t^2.   \label{ACbound1}
\end{equation}Here $\eta$ is defined in (\ref{etadef}).
Motivated by {\cite[p.147]{LT1991}}, we find some $R>r$ so that
\[
1=\int_r^R \frac{dt}{L(t)}\ge \frac{1}{2\eta\pi}\int_r^R \frac{dt}{t}=\frac{1}{2\eta\pi}(\log R-\log r).
\]
Therefore
\begin{equation}
    R\le e^{2 \eta\pi} r.   \label{ACbound2}
\end{equation}
For such a particular choice of $R$, (\ref{Cbound0}) is reduced to
\begin{equation}
    s(r)\le i(R)^{\exp\Big(\frac{1}{2}\sqrt{\delta(r)}\Big)}.  \label{ACbound3}
\end{equation}
By definition of $i(r)$, we have $B(p,i(r)) \subset B_g(p, r)$. Then
\[
\operatorname{Vol}_g(B(p, i(r))) \le \operatorname{Vol}_g(B_g(p, r)).
\]
In the meantime, we estimate
\[
\operatorname{Vol}_g(B(p, i(r)))=\int_{B(p, i(r))}u\,dxdy\ge u(p)\pi(i(r))^2.
\]
Here we make a crucial use of the fact that $\log u$ (hence $u$) is subharmonic.
To sum up, we arrive at
\begin{equation}
    u(p)(i(r))^2 \le \frac{A(r)}{\pi} \leq \eta  r^2.  \label{ACbound4}
\end{equation}
From (\ref{ACbound2}), (\ref{ACbound3}), and (\ref{ACbound4}), we obtain
\begin{align}
\log s(r) &\leq e^{\frac{\sqrt{\delta(r)}}{2}} \log i(R)  \nonumber\\
      & \leq e^{\frac{\sqrt{\delta(r)}}{2}}  \log  \Big( \sqrt{\frac{\eta}{u(p)}} R\Big)  \nonumber\\
      &  \leq   e^{\frac{\sqrt{\delta(r)}}{2}}  \log r + e^{\frac{\sqrt{\delta(r)}}{2}}  (2\eta \pi +\frac{1}{2}\log \frac{\eta}{u(p)}).  \label{ACbound5}
\end{align}

Recall the definition of $\delta(r)$ in (\ref{deltadef}). Given $f=\log (\log |v|)$, we could write $\int_{\C \setminus B_g(p,r)}  |\nabla f|^2 dV_g$ as $\int_{\C \setminus B_{g}(p,r)}  |\nabla f|^2 dV_{g_e}$ where $g_e=|dv|^2$ is the Euclidean metric on $\pi^{-1}(p)$ using conformal invariance. For any compact set $K \subset U$ where $U$ is a chart from the local trivialization of $L$ as $\varphi_i: \pi^{-1}(U_i) \rightarrow U_i \times \mathbb{C}$. We conclude that $B_{g}(0,r)$ varies uniformly continuous with respect to $p \in K$. Therefore $\delta(r)$ converges uniformly to zero as $r \rightarrow \infty$.

To sum up, for any $\varepsilon>0$, there exists two constants $C$ and $R_0$ which depend on $\varepsilon>0$, $\eta$, $u(p)$, and the geometry of $g_p$ such that for any $p \in K$,
    \begin{equation}
        \sup_{B_g(p, r)} |v| \leq C r^{1+\varepsilon}.\ \ \ r>R_0.  \label{ACbound7}
    \end{equation}

From the Taylor expansion (\ref{taylor}), we deduce that for any $\varepsilon>0$,
\begin{equation}
\operatorname{dim} \mathcal{O}_d(X, \widetilde{g}) \geq \sum_{k=0}^{\lfloor (1+\varepsilon)d \rfloor} \operatorname{dim} H^0(M, L^{-k}).   \label{ACbound8}
\end{equation}
(\ref{dimcmain2}) is proved after we take $\varepsilon \rightarrow 0$ in (\ref{ACbound8}).

Finally, we exmaine the equality case of (\ref{dimcmain2}) along a sequence $d_j \rightarrow \infty$. We focus on any fixed fiber $(\pi^{-1}, g_p)$. Note that it is a copy of complex plane with nonpositive Gauss curvature and finite total curvature. The latter property follows from (\ref{ACbound1})
\[
\int_{\pi^{-1}(p)} K(g_p) \operatorname{dVol}_{g_p}  =\int_0^{\infty}  k(s)L(s)ds \leq 2\pi \eta \int_0^{\infty}  sk(s)ds.
\]
For such surfaces, a remarkable result \cite[Corollary 3.3]{LT1991} states that $\lim_{\s \rightarrow \infty} \frac{\ln |\s|}{\ln |v|}$ exists and
\begin{equation}
\lim_{\s \rightarrow \infty} \frac{\ln |\s|}{\ln |v|}=1-\frac{1}{2\pi}  \int_{\pi^{-1}(p)} K(g_p) \operatorname{dVol}_{g_p}.    \label{LTmain}
\end{equation}
In view of (\ref{LTmain}), it suffices to show show $\lim_{\s \rightarrow \infty} \frac{\ln |v|}{\ln |\s|}=1$. It remains to show $\limsup_{s \rightarrow \infty} \frac{\ln |v|}{\ln s} \geq 1$ after we take (\ref{ACbound7}) into account. Assume the contrary, and then there exists some $\varepsilon_0$ and $\s_0$ so that
$\ln |v| \leq (1-\varepsilon_0) \ln|\s|$ for $\s \geq \s_0$. Then for any $d_j$ large enough, there exists $C>0$ and $\s_1$ so that
\[
|v|^{d_j+1} \leq |v|^{\frac{d_j}{1-\varepsilon_0}} \leq C(|\s|^{d_j}+1),\ \ \s \geq \s_1.
\]
This is a contradiction as (\ref{dimcmain2}) is assumed to hold as an inequality along $\{d_j\} \rightarrow \infty$.

\end{proof}

\section{Liouville theorems on holomorphic mappings}\label{sec5}

\begin{lemma}\label{ctoc}
    Let $g_1$ and $g_2$ be two K\"ahler metrics on the complex plane $\mathbb{C}$, and $K_1$ and $K_2$ their Gauss curvatures respectively. Assume that $g_1$ is complete. Let $O$ denote the origin of $\mathbb{C}$. Assume that there exists some nonnegative function $k \in C^0([0, +\infty))$ with the property that $\int_0^{\infty} k(s)ds<\infty$ so that the Gauss curvatures of $g_1$ satisfies:
    \begin{align}
    K_1(x) \geq -k(s_1(x)), \ \text{where}\ s_1(x)=d_{g_1}(O, x),\ \text{and for any}\ x \in \mathbb{C}.
    \end{align}
    The Gauss curvature of $g_2$ satisfies $K_2 \leq 0$ everywhere on $\mathbb{C}$. For any entire holomorphic function $f  \in \mathcal{O}(\mathbb{C})$, we define:
    \begin{align}
    &M_f(s)=\sup_{x \in B_{g_1}(O, s)} d_{g_2}(O, f(x)),    \\
    &\mathcal{O}_d(\C,g_1,g_2)=\{ f\in \mathcal{O}(\C)\ |\ M_f(s)\le C s^d \ \text{for some constant}\   C>0\}.
    \end{align}
    If there exists some nonconstant $f \in \mathcal{O}_d(\C,g_1,g_2)$, then we have $d \geq \frac{1}{\eta}$ with $\eta$ defined in (\ref{etadef}). Moreover,
    \begin{equation}
        \lim_{s \to 0} \frac{M_f(s)}{s^{\eta d}}>0.    \label{dime2}
    \end{equation}
\end{lemma}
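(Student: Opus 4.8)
The plan is to adapt the proof of Proposition \ref{upperdim} to the setting of holomorphic maps, the key new point being to replace $\log|f|$ by the subharmonic function $\Phi:=\log d_{g_2}(O,f(\cdot))$. First I would verify that $\Phi$ is subharmonic on $\mathbb{C}$. On the target, the hypothesis $K_2\le 0$ together with the Hessian comparison theorem gives $\Delta_{g_2}\log d_{g_2}(O,\cdot)\ge 0$ away from $O$, so $\log d_{g_2}(O,\cdot)$ is plurisubharmonic (complex dimension one) and extends across $O$, where it equals $-\infty$. Since plurisubharmonicity is preserved under holomorphic pullback, $\Phi$ is subharmonic on the domain. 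By the maximum principle, $\log M_f(s)=\sup_{\overline{B_{g_1}(O,s)}}\Phi=\sup_{\partial B_{g_1}(O,s)}\Phi$.

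Next I would run the three-circle argument of Proposition \ref{thrcir} with $\log|f|$ replaced by $\Phi$. That proof used only the pointwise inequality $\Phi_{n\bar n}\ge 0$---here automatic since $\Phi$ is subharmonic and the domain has complex dimension one---together with the radial Hessian estimate $(s_1)_{n\bar n}\le u'(s_1)/(2u(s_1))$ of Proposition \ref{Hessian1}, valid here because the single holomorphic sectional curvature equals the Gauss curvature $K_1\ge -k(s_1)$ and $\eta<\infty$ (cf. Lemma \ref{eta}); the $\varepsilon$-perturbation $h_\varepsilon=\log(e^h-\varepsilon)$ supplies the strict sign needed at the maximum. This yields (\ref{thr1}) for $\log M_f$. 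Letting $s_3\to\infty$ and inserting the growth bound $M_f(s)\le Cs^d$ together with $\tfrac1\eta\log s+C_1\le h(s)\le\log s+C_2$ from Lemma \ref{eta}, I obtain $\log M_f(s_2)\le\log M_f(s_1)+(h(s_2)-h(s_1))\eta d$; equivalently $M_f(s)/e^{\eta d\,h(s)}$ is non-increasing on $(0,\infty)$.

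Since $f$ is nonconstant, $f^{-1}(O)$ is discrete, so $M_f(s)>0$ for every $s>0$ and the non-increasing quantity $M_f(s)/e^{\eta d\,h(s)}$ tends to a limit $A\in(0,+\infty]$ as $s\to0$. Using the expansion $h(s)=\log s+C+o(1)$ near $0$ (as in (\ref{initial}), since $u(t)\sim t$), one gets $e^{\eta d\,h(s)}/s^{\eta d}\to e^{\eta d\,C}>0$, hence $\lim_{s\to 0}M_f(s)/s^{\eta d}=A\,e^{\eta d C}>0$, which is (\ref{dime2}).

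Finally, for $d\ge 1/\eta$ I would compare $\eta d$ with the order of vanishing of $\Phi$. When $f(O)=O$ with mapping order $m\ge1$, one has $M_f(s)\sim c\,s^{m}$ near $0$, so the positivity and finiteness of $A$ force $m\le\eta d$, giving $\eta d\ge m\ge 1$. For the remaining case I would use a Jensen/mean-value argument: the circular mean $m(r)=\frac{1}{2\pi}\int_{|z|=r}\Phi$ is convex in $\log r$ with $\tfrac{d}{d\log r}m(r)=\tfrac{1}{2\pi}\mu_\Phi(\overline{B_r})$, where $\mu_\Phi$ is the Riesz measure of $\Phi$; at each zero of $f$ this measure carries integer mass $\ge 2\pi$, so once $B_r$ contains a zero one has $m(r)\ge\log r+C$, while $m(r)\le\log M_f(r)\le\eta d\log r+C'$ forces $\eta d\ge 1$. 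I expect the two delicate points to be (i) the rigorous subharmonicity of $\Phi$ when $g_2$ merely satisfies \ref{AssumeA}---the target cut-focal locus must be handled by the Calabi trick exactly as in the proof of Proposition \ref{thrcir}---and (ii) ruling out a nonconstant $f\in\mathcal{O}_d$ that omits $O$ entirely, which requires a Liouville-type statement for the then slowly growing subharmonic (in the flat-target case, harmonic) function $\Phi$. This lower bound $d\ge 1/\eta$ is the main obstacle.
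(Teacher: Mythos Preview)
Your core argument is exactly the paper's: replace $\log|f|$ by $\Phi=\log d_{g_2}(O,f(\cdot))$, verify $\Phi_{z\bar z}\ge 0$ via the Hessian comparison $\phi_{w\bar w}\ge \tfrac{1}{2\phi}$ on the target (valid since $K_2\le 0$), run the three-circle argument of Proposition~\ref{thrcir} to obtain (\ref{thr1}), pass to the monotonicity of $M_f(s)/e^{\eta d\,h(s)}$, and read off (\ref{dime2}) from the expansion of $h$ near $0$. The paper's proof is precisely this, with the computation
\[
(\log\phi(f(x)))_{z\bar z}=\Big(\tfrac{\phi_{w\bar w}}{\phi}-\tfrac{\phi_w\phi_{\bar w}}{\phi^2}\Big)|f_z|^2\ge 0
\]
done explicitly.

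Where you diverge is in extracting $d\ge 1/\eta$. Your Jensen/mean-value route is more elaborate than needed and, as you note, leaves the case where $f$ omits $O$ unresolved. The clean fix dissolves all three of your worries at once: simply move the reference point on the target from $O$ to $w_0:=f(O)$. The hypothesis $K_2\le 0$ is pointwise and independent of base point; the triangle inequality gives $|d_{g_2}(w_0,f(x))-d_{g_2}(O,f(x))|\le d_{g_2}(O,w_0)$, so the class $\mathcal{O}_d$ is unchanged; and now $f$ sends origin to origin with some local order $m\ge 1$. Since $s_1(z)\sim c_1|z|$ and $d_{g_2}(w_0,w)\sim c_2|w-w_0|$ near the origins, one has $M_f(s)\sim c\,s^{m}$, and (\ref{dime2}) then forces $m\le\eta d$, i.e.\ $d\ge 1/\eta$. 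This also eliminates the need to worry about $f$ omitting the target base point (it never omits $f(O)$) and about the Riesz-measure bookkeeping. The paper's own proof does not spell this reduction out, so your instinct that something is being glossed over is correct---but the resolution is a one-line change of base point rather than a Jensen argument.
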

\begin{proof}[Proof of Lemma \ref {ctoc}]
    We only need to prove equation $(\ref{thr1})$. Let
    \begin{equation}
        G(x)=(h(\s_3)-h(\s_1))\log \phi(f(x)),   \label{adjust}
    \end{equation} as equation $(\ref{eqg})$ where we set $\phi(y)=d_{g_2}(O, y)$.
    For any $x \in \mathbb{C}$, we use $z$ the holomorphic coordinate, and $w$ the coordinate for $f(x)$.
    It does not hurt to assume that $\frac{\partial}{\partial z}$ is unitary at $x$ and $\frac{\partial}{\partial w}$ is unitary at $f(x)$. By the standard Hessian comparison theorem, we get $\phi_{w\overline{w}} \geq \frac{1}{2\phi}$ at $f(x)$. Therefore
    \begin{equation}
        (\log \phi(f(x)))_{z \overline z}=(\frac{\phi_{w \overline w}}{\phi}-\frac{\phi_w \phi_{\overline w}}{\phi^2})f_z  \overline{f_{z}} \geq (\frac{\phi_{1 \overline{1}}}{\phi}-\frac{1}{2\phi^2}) |f_z|^2\geq 0.
    \end{equation}
    In the meantime, we have (\ref{hessianh1}). Following the proof of Proposition \ref{thrcir}, we get
    $G(x) \le F(x)$. Therefore $\dfrac{M_f(s)}{e^{\eta d h(s)}}$ is non-increasing on $(0, \infty)$ as in (\ref{thr2}), and $(\ref{dime2})$ follows in view of the proof of (\ref{vanish1}).

\end{proof}

\begin{proposition}\label{xtoc}
    Assume $(X, \widetilde{g})$ is a complete K\"ahler manifold which satisfies the same assumption as in Proposition \ref{upperdim}. Assume $g^{\ast}$ is a K\"ahler metric with nonpositive Gauss curvature on $\mathbb{C}$. For any given $f \in \mathcal{O}(X)$, we define
    \[
   M_f(\s)=\sup_{x\in T(M,\s)}{d_{g^{\ast}}(0,f(x))},\ \  \text{where}\ T(M, r)=\{q \in M\ |\ \s(q)<r\}.
   \]
   Then the same conclusion as that of Lemma \ref {ctoc} holds.
\end{proposition}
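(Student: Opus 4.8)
The plan is to reduce the statement to the same monotonicity that drives Lemma \ref{ctoc} and Proposition \ref{upperdim}: namely that the three-circle inequality (\ref{thr1}) continues to hold for the quantity $M_f(\s)=\sup_{x\in T(M,\s)}d_{g^{\ast}}(0,f(x))$. Since $(X,\widetilde g)$ satisfies the hypotheses of Proposition \ref{upperdim}, the three-circle mechanism of Proposition \ref{thrcir} is available on $X$; the only change is to run its maximum-principle argument with the pluriharmonic function $\log|f|$ replaced by $\log\phi(f(x))$, where $\phi(y)=d_{g^{\ast}}(0,y)$, exactly as in the passage from Proposition \ref{thrcir} to Lemma \ref{ctoc}.

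First I would record the two analytic inputs. On the domain side, the curvature lower bound $R(e_n,\overline{e_n},e_n,\overline{e_n})\ge -k(\s)$ inherited from Proposition \ref{upperdim} yields, through Proposition \ref{Hessian1}, the Hessian estimate (\ref{he3}), hence the inequalities (\ref{hessianh1}) and (\ref{hessianh2}) for $h$ and its regularization $h_\varepsilon$. On the target side, because $g^{\ast}$ has nonpositive Gauss curvature on $\mathbb{C}$, the Hessian comparison theorem gives $\phi_{w\overline{w}}\ge\frac{1}{2\phi}$ wherever $\phi$ is smooth, so the computation in Lemma \ref{ctoc} shows that $\bigl(\log\phi(f(x))\bigr)_{n\overline{n}}\ge 0$ along $e_n$ (indeed along every direction). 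Thus $\log\phi(f)$ is plurisubharmonic wherever $f$ avoids the origin of the target, and $M_f(\s)$ is non-decreasing by the maximum principle.

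With these in hand I would set up $F_\varepsilon$ and $G_\varepsilon$ as in (\ref{eqf})--(\ref{eqg}), but with $\log|f|$ replaced by $\log\phi(f)$, and suppose $G_\varepsilon-F_\varepsilon$ attains a positive maximum at an interior point $x_\varepsilon\in T(M,\s_3)\setminus\overline{T(M,\s_1)}$; as in Lemma \ref{ctoc} one may assume $f(x_\varepsilon)$ lies away from the origin and the cut locus of the target, so that $\log\phi(f)$ is smooth near $x_\varepsilon$. Computing the complex Hessian along $e_n$ (or along $f_n$ at a cut-focal point, where the Calabi-trick support function $F_{\varepsilon,\delta}$ of Case 2 of Proposition \ref{thrcir} is used verbatim), the term coming from $G_\varepsilon$ is now $G_{\varepsilon,n\overline{n}}=(h_\varepsilon(\s_3)-h_\varepsilon(\s_1))\bigl(\log\phi(f)\bigr)_{n\overline{n}}\ge 0$, while $-F_{\varepsilon,n\overline{n}}>0$ by (\ref{hessianh2}). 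Hence $(G_\varepsilon-F_\varepsilon)_{n\overline{n}}(x_\varepsilon)>0$, contradicting the maximum. Letting $\varepsilon\to0$ establishes (\ref{thr1}) for $M_f$, and the monotonicity of $M_f(\s)/e^{\eta d h(\s)}$ together with the small-$\s$ asymptotics then gives $d\ge\frac1\eta$ and (\ref{dime2}), exactly as in (\ref{thr2})--(\ref{vanish1}) and Lemma \ref{ctoc}.

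The main point, and the only place where the mapping case genuinely differs from Proposition \ref{thrcir}, is the replacement of the pluriharmonic $\log|f|$ by the merely plurisubharmonic $\log\phi(f)$. I expect this to help rather than hinder: in Proposition \ref{thrcir} the corresponding Hessian term vanished, whereas here it is nonnegative, so the sign in the maximum-principle contradiction is preserved. The genuinely delicate bookkeeping is the interaction of the domain's cut-focal locus (treated by the Calabi trick) with the requirement that the target distance $\phi$ be smooth at $f(x_\varepsilon)$; both are handled by importing, respectively, Case 2 of Proposition \ref{thrcir} and the ``$f(x_\varepsilon)\neq 0$'' reduction of Lemma \ref{ctoc}.
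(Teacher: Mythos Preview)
Your proposal is correct and follows exactly the approach the paper takes: the paper's own proof simply says that one runs the argument of Proposition \ref{thrcir} and Proposition \ref{upperdim}, making the same adjustment as in (\ref{adjust}), i.e.\ replacing $\log|f|$ by $\log\phi(f)$. Your write-up spells out precisely this substitution and the sign check for the plurisubharmonic term, which is the only new ingredient.
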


\begin{proof}[Proof of Proposition \ref{xtoc}]
    It follows from the proof of Proposition \ref{thrcir} and Proposition \ref{upperdim}. We only need to make adjustments as in (\ref{adjust}).
\end{proof}

We prove Proposition \ref{xtoy} in the end of this section. As mentioned in the introduction, this type of result has potential implications for understanding holomorphic biholomorphism of $X$ in Proposition \ref{lowerdim}.

\begin{proof}[Proof of Proposition \ref{xtoy}]
    Note that there is no nonconstant entire curve on $M_2$ as it is Kobayashi hyperbolic. For any $f \in \mathcal{O}(X, Y)$ and $p\in X$, $\pi_2 \circ f(\pi_1^{-1}(p))$ must be a single point on $M_2$. Therefore we know $f$ maps fibers of X into fibers of $Y$. We may assume $f(\pi_1^{-1}(p)) \subset \pi_2^{-1}(q)$ and consider the restriction $f: \pi_1^{-1}(p) \rightarrow \pi_2^{-1}(q)$ with the induced metric from $(X,\widetilde{g})$ and $(Y,\widetilde{h})$ respectively. By Lemma \ref {ctoc}, we know any such restriction map is constant. Hence $f$ factors through $\pi_1$. Apriori the image of $f$ might wind in a complicated way and intersect some fibers of $M_2$ multiple times, and it remains unknown whether $f$ can be further reduced. Exactly, it is unclear whether there exists some $f_{\diamond} \in \mathcal{O}(M_1, M_2)$ and a global section $\sigma$ of ${L_2}|_{f_{\diamond}(M_1)}$ so that $f=\sigma \circ f_{\diamond} \circ \pi_1$. In the special case that the image of $\pi_2 \circ f$ is a single point, we know $f$ is constant by Proposition \ref{xtoc}.
\end{proof}

\appendix

\section{\texorpdfstring{$U(n)$}{TEXT}-invariant K\"{a}hler metrics on \texorpdfstring{$\mathbb{C}^{n}$}{TEXT} with negative curvature}  \label{negUN}

Wu-Zheng \cite{WZ} made an extensive study on $U(n)$-invariant K\"{a}hler metrics on $\mathbb{C}^{n}$ with $BI>0$. In particular, they demonstrated such examples with unbounded curvature and some connection between curvature decay and volume growth of these metrics. These examples serve as a testing ground for a broader investigation on complete K\"{a}hler manifolds with $BI \geq 0$. Motivated by \cite{WZ}, we study $U(n)$-invariant K\"{a}hler metrics on $\mathbb{C}^{n}$ with $BI<0$ and prove some asymptotic geometric properties of these metrics. In particular, we explain how we formulate the $\widetilde{\chi}$ function in Theorem \ref{main1}, and present some negatively curved K\"ahler metrics previously discovered by Milnor, Cao, and Seshadri in a uniform way.


Let $z=(z_1,\cdots,z_n)$ be the holomorphic coordinate on $\mathbb{C}^{n}$ and $r=|z|^{2}$.
A $U(n)$-invariant K\"{a}hler metric on $\mathbb{C}^{n}$ has the K\"{a}hler
form
\begin{equation}
\omega=\sqrt{-1}\partial \overline{\partial}
\mathcal{P}(r) \label{Kahler form}
\end{equation} for some $\mathcal{P} \in C^{\infty}
[0,+\infty)$. Under the local coordinates $\{z_i\}$, the metric has
components:
\begin{equation}
g_{i\overline{j}}=f(r)\delta_{ij}+f'(r) \overline{z}_i z_j.
\end{equation}
We further denote:
\begin{equation}
f(r)=\mathcal{P}'(r),\ \ \ \ h(r)=(rf)'.  \label{def of f and h}
\end{equation}
It can be checked that the from $\omega$ will give a complete K\"{a}hler metric on $\mathbb{C}^n$ if and
only if
\begin{equation}
f>0, \ \  h>0, \ \ \int_{0}^{+\infty}  \frac{\sqrt{h}}{\sqrt{r}}
dr=+\infty. \label{being complete}
\end{equation}

We compute the curvature tensor at
$(z_1,0,\cdots,0)$ under the unitary frame $\{
e_1=\frac{1}{\sqrt{h}} \partial_{z_1},e_2=\frac{1}{\sqrt{f}}
\partial_{z_2}, \cdots, e_n=\frac{1}{\sqrt{f}} \partial_{z_n} \}$.
We introduce three bisectional curvatures $A, B, C$ respectively:
\begin{equation}
A=R_{1\overline{1}1\overline{1}}=-\frac{1}{h} (\frac{rh'}{h})',\
B=R_{1\overline{1}i\overline{i}}=\frac{f'}{f^2}-\frac{h'}{hf},\
C=R_{i\overline{i}i\overline{i}}=2R_{i\overline{i}j\overline{j}}=-\frac{2f'}{f^2},
\label{ABC one form}
\end{equation} where we assume $2 \leq i \neq j \leq n$.
One may check all other components of curvature tensors are
zero except those which are equal to $A, B$, or $C$ due to symmetry properties of K\"ahler curvature tensors. Let $\mathcal{N}_n$ denote the set of complete
$U(n)$-invariant K\"ahler metrics on ${\mathbb C}^n$ with $BI<0$.
Our major goal in this appendix is to study geometric properties of K\"ahler metrics in $\mathcal{N}_n$ in the spirit of \cite{WZ}.

\subsection{Various characterizations of \texorpdfstring{${\mathcal N}_n$}{TEXT}}

In this subsection, we introduce various equivalent characterization of ${\mathcal N}_n$, all of which are related by a suitable change of variables. By choosing a suitable characterization we may study volume growth, curvature decay, and other geometric properties of K\"ahler metrics in $\mathcal {N}_n$ in a more convenient way.

\begin{theorem}
[\textbf{the $ABC$ function}] \label{mainApp}
Suppose $n \geq 2$ and $f$ is a smooth positive function on
$[0,+\infty)$ satisfying (\ref{being complete}). Then (\ref{Kahler
form}) gives a metric in ${\mathcal N}_n$ if and only if $A,B,C$ are negative.
Moreover, it has negative sectional curvature (or negative complex curvature operator) if in addition
\[
D \doteq AC-B^2>0\ \ \  (\,\text{or}\ D_{n} \doteq \frac{n}{2(n-1)} AC-B^2)
\]
holds on $\mathbb{C}^n$.
\end{theorem}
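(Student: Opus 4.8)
The plan is to reduce everything to the explicit components $A,B,C$ from (\ref{ABC one form}) and to exploit that, in the unitary frame $\{e_1,\dots,e_n\}$ at $(z_1,0,\dots,0)$, these are (up to the Kähler symmetries) the only nonvanishing entries of the curvature tensor, the fiber directions forming a constant–holomorphic–sectional–curvature $C$ block, i.e. $R_{i\bar j k\bar l}=\tfrac{C}{2}(\delta_{ij}\delta_{kl}+\delta_{il}\delta_{kj})$ for $i,j,k,l\ge 2$. The decisive bookkeeping step is to package the bisectional curvature of two arbitrary $(1,0)$ vectors $X=x_1e_1+X'$ and $Y=y_1e_1+Y'$, with $X',Y'\in\operatorname{span}\{e_2,\dots,e_n\}$, into manifestly signed pieces.

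First I would prove the equivalence: the metric lies in $\mathcal N_n$ if and only if $A,B,C<0$. Expanding $R(X,\bar X,Y,\bar Y)$ with the components above and recognizing that $|x_1Y'+y_1X'|^2=|x_1|^2|Y'|^2+|y_1|^2|X'|^2+2\operatorname{Re}\big(x_1\overline{y_1}\,\overline{\langle X',Y'\rangle}\big)$ collapses the whole expression to
\[
R(X,\bar X,Y,\bar Y)=A\,|x_1|^2|y_1|^2+B\,|x_1Y'+y_1X'|^2+\tfrac{C}{2}\big(|X'|^2|Y'|^2+|\langle X',Y'\rangle|^2\big).
\]
Since the three bracketed quantities are nonnegative, $A,B,C<0$ forces $BI<0$; conversely, testing $X=Y=e_1$ and $X=Y=e_i$ gives $A,C<0$, and $X=e_1,\ Y=e_i$ gives $B<0$. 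The only delicate point is strictness, which I would settle by a short case analysis showing the three terms cannot vanish together unless $X=0$ or $Y=0$ (split on whether $x_1$ or $y_1$ vanishes).

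Next, for the complex curvature operator I would view the curvature as the Hermitian form $\phi\mapsto\sum R_{i\bar j k\bar l}\phi_{i\bar j}\overline{\phi_{k\bar l}}$ on real $(1,1)$-forms $\phi=(\phi_{i\bar j})$. The diagonal structure block-diagonalizes it: the off-diagonal entries contribute $B|\phi_{1\bar i}|^2$ and $\tfrac{C}{2}|\phi_{i\bar j}|^2$ ($2\le i\neq j$), negative once $B,C<0$, while the diagonal entries $\psi_i=\phi_{i\bar i}$ give $A\psi_1^2+2B\psi_1S+\tfrac{C}{2}\big(\sum_{i\ge2}\psi_i^2+S^2\big)$ with $S=\sum_{i\ge2}\psi_i$. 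On $\{S=0\}$ this is $\tfrac{C}{2}\sum\psi_i^2<0$, and on the remaining $S$-mode (where $\sum_{i\ge2}\psi_i^2=S^2/(n-1)$) it reduces to the $2\times2$ form with matrix $\left(\begin{smallmatrix}A & B\\ B & \frac{n}{2(n-1)}C\end{smallmatrix}\right)$, whose negative definiteness is precisely $A<0$ together with $D_n=\frac{n}{2(n-1)}AC-B^2>0$. This is exactly where the dimensional factor originates.

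Finally, for negative sectional curvature I would use that, the metric being Kähler, the curvature operator is supported on $(1,1)$-forms, so $K(u,v)=R(u,v,v,u)$ equals the Hermitian form above evaluated on the $(1,1)$-part of the decomposable real $2$-form $u\wedge v$. By $U(n)$-invariance I may place $u,v$ in the real $4$-plane underlying $\operatorname{span}_{\mathbb C}\{e_1,e_2\}$, where the relevant projections realize the diagonal forms $p\,\widehat e_1+q\,\widehat e_2$; evaluating then yields the $2\times2$ matrix $\left(\begin{smallmatrix}A & B\\ B & C\end{smallmatrix}\right)$ and hence the condition $D=AC-B^2>0$. The absence of the averaging factor $\frac{n}{2(n-1)}$ reflects that a single decomposable form concentrates in one complex fiber line rather than spreading over all $n-1$ of them, consistent with $D_n\le D$, i.e. a negative complex curvature operator implies negative sectional curvature. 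The main obstacle I anticipate is exactly this last step: characterizing which $(1,1)$-forms arise as $(u\wedge v)^{1,1}$ for orthonormal $u,v$, verifying that the curvature-maximizing plane is the balanced one producing $\left(\begin{smallmatrix}A & B\\ B & C\end{smallmatrix}\right)$, and correctly discarding the $(2,0)+(0,2)$ components of $u\wedge v$ (annihilated by the Kähler curvature operator but affecting the normalization).
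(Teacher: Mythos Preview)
The paper states Theorem~\ref{mainApp} without proof; it is recorded as the negative-curvature analogue of the corresponding result in \cite{WZ} and then used as a black box. So there is no proof in the paper to compare your proposal against, and I can only assess your argument on its own.

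Your treatment of the bisectional-curvature equivalence is correct: the identity
\[
R(X,\bar X,Y,\bar Y)=A\,|x_1|^2|y_1|^2+B\,|x_1Y'+y_1X'|^2+\tfrac{C}{2}\big(|X'|^2|Y'|^2+|\langle X',Y'\rangle|^2\big)
\]
is exactly what the curvature components (\ref{ABC one form}) give, and your strictness case-split is fine. The complex-curvature-operator argument is also sound: the form does decouple into the diagonal block and off-diagonal blocks, and on the diagonal block the minimization over $\sum_{i\ge2}\psi_i^2$ at fixed $S$ produces precisely the $2\times2$ matrix $\bigl(\begin{smallmatrix}A & B\\ B & \frac{n}{2(n-1)}C\end{smallmatrix}\bigr)$, whence $D_n>0$.

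The gap is in the sectional-curvature step. The isotropy at $(z_1,0,\dots,0)$ is only $U(1)\times U(n-1)$, and for a generic real $2$-plane $\sigma=\operatorname{span}\{u,v\}$ the fiber projections $U',V'$ of the $(1,0)$-parts $U,V$ span a complex \emph{two}-dimensional subspace of $\operatorname{span}\{e_2,\dots,e_n\}$; you cannot rotate $\sigma$ into the real $4$-plane under $\operatorname{span}_{\mathbb C}\{e_1,e_2\}$. What the isotropy does buy you is a reduction to $n=3$, after which you still have to analyze $2$-planes in a real $6$-space. One way to finish: exploit that the fiber block has constant holomorphic sectional curvature $C$, so $R$ restricted to $\operatorname{span}\{e_2,\dots,e_n\}$ is $U(n-1)$-isotropic; then show directly (e.g.\ by writing $(u\wedge v)^{1,1}=U\wedge\bar V-V\wedge\bar U$ as a Hermitian matrix of rank $\le 2$ and optimizing your diagonal quadratic form over that constraint rather than over all Hermitian matrices) that the worst case is the rank-one direction $\psi_1 e_1\wedge\bar e_1+\psi_2 e_2\wedge\bar e_2$, which yields $\bigl(\begin{smallmatrix}A & B\\ B & C\end{smallmatrix}\bigr)$ and hence $D=AC-B^2>0$. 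Your closing paragraph correctly identifies this as the obstacle, but the sketch you give does not yet resolve it.
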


We define a smooth function $\xi$ on $[0,+\infty)$ by
\begin{equation}
\xi(r)=-\frac{r h'(r)}{h}. \label{def of xi}
\end{equation} On the other hand,
$\xi$ determines $h$ by $h(r)=h(0) e^{\int_{0}^{r}
-\frac{\xi(t)}{t} dt}$, hence $\omega$ up to scaling. It is exactly $\xi$ which motivated
our introduction of $\chi$ in (\ref{chidef}).

\begin{proposition}[\textbf{the $\xi$
function}]\label{propA2}
Let $n\geq 2$ and h be a smooth negative function on
$[0,+\infty)$. Then the form defined by (\ref{Kahler form}) gives a
complete K\'{a}hler metric with $BI<0$ ($BI \leq 0$) if and only if $\xi$ defined by (\ref{def of xi})
satisfying
\begin{align} \xi(0)=0,\ \ \ \xi^{\prime}<0 \ \ (\xi^{\prime}\leq
0).
\end{align}
\end{proposition}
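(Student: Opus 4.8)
The plan is to reduce everything to the $ABC$-characterization of Theorem \ref{mainApp} and then translate the sign conditions on $A$, $B$, $C$ into conditions on $\xi$ through the single clean identity obtained by differentiating (\ref{def of xi}). Before starting, observe that for (\ref{Kahler form}) to define a genuine (positive-definite) complete K\"ahler metric at all, the two eigenvalues $f$ and $h$ of $g_{i\bar j}$ at $(z_1,0,\dots,0)$ must be positive; thus the completeness conditions (\ref{being complete}) already force $h>0$, and in the reconstruction below we normalize $h(0)>0$. With $h>0$ in force, the normalization $\xi(0)=0$ is automatic: since $h$ is smooth with $h(0)\neq 0$, the factor $r$ in $\xi(r)=-rh'(r)/h(r)$ makes $\xi(0)=0$, so the genuine content of the statement is the monotonicity $\xi'<0$ ($\xi'\le 0$). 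The key computation is $A=\xi'/h$, which follows at once from $rh'/h=-\xi$.

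For the forward direction, suppose (\ref{Kahler form}) is a complete K\"ahler metric with $BI<0$. By Theorem \ref{mainApp} together with (\ref{being complete}) we have $f,h>0$ and $A,B,C<0$. From $A=\xi'/h$ and $h>0$ we immediately obtain $\xi'<0$, while $\xi(0)=0$ holds automatically as above; the $BI\le 0$ case is identical with the non-strict inequality. For the converse, given $\xi\in C^{\infty}[0,\infty)$ with $\xi(0)=0$ and $\xi'<0$, I would reconstruct the metric and verify all hypotheses of Theorem \ref{mainApp}, in close parallel with (\ref{Ntaueqn}) in the body. First recover $h(r)=h(0)\exp\!\big(-\int_0^r \xi(t)/t\,dt\big)$ with $h(0)>0$; the integral converges near $0$ because $\xi(t)/t\to\xi'(0)$, so $h$ is smooth and positive. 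Then set $f(r)=\frac 1r\int_0^r h(t)\,dt$, which is smooth, positive, and satisfies $h=(rf)'$ as required by (\ref{def of f and h}). Since $\xi(0)=0$ and $\xi'<0$ give $\xi<0$ on $(0,\infty)$, the relation $h'=-\xi h/r$ shows $h$ is increasing, whence $f'(r)=\frac{1}{r^2}\int_0^r\big(h(r)-h(t)\big)\,dt>0$ and therefore $C=-2f'/f^2<0$; likewise $A=\xi'/h<0$. Completeness is automatic, since $h\ge h(0)>0$ forces $\int^{\infty}\sqrt{h/r}\,dr=\infty$.

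The main obstacle is the sign of $B$, which is exactly the analogue of the quantity $II$ handled in the proof of Theorem \ref{lexistence}. Writing $B=\frac 1f\big(\frac{f'}{f}-\frac{h'}{h}\big)$ and using $f'/f=\frac 1r(h/f-1)$ together with $h'/h=-\xi/r$, one checks that $B<0$ is equivalent to $\Psi<0$, where $\Psi\doteq h/f-(1-\xi)$. I would then verify $\Psi(0)=0$ (using $h(0)/f(0)=1$ and $\xi(0)=0$) and derive the first-order linear ODE $\Psi'+\frac{h}{rf}\Psi=\xi'$. Since the forcing term $\xi'$ is negative and $\Psi(0)=0$, an integrating-factor argument (equivalently, the Riccati-type comparison of Lemma \ref{rct} already used in the body) forces $\Psi<0$ on $(0,\infty)$, hence $B<0$. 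Invoking Theorem \ref{mainApp} then completes the converse, and the $BI\le 0$ version follows verbatim with every strict inequality relaxed to the non-strict one.
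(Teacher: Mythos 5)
Your proposal is correct and takes essentially the same approach as the paper: the paper's argument (carried out in detail for the analogous Theorem \ref{lexistence}, of which Proposition \ref{propA2} is the $\mathbb{C}^n$ model) likewise gets the forward direction from $A=\xi^{\prime}/h$, reconstructs $h(r)=h(0)\exp\bigl(-\int_0^r \xi(t)/t\,dt\bigr)$ as in (\ref{Ntaueqn}), and settles the sign of $B$ by a monotonicity argument. In fact your integrating factor is $\exp\bigl(\int \frac{h}{rf}\,dr\bigr)=rf$ up to a constant, so your linear ODE for $\Psi$ is literally the paper's computation that the quantity $\frac{rh}{1-\xi}-rf$ (the analogue of $\frac{\lambda t Q}{1-\chi}-P$) vanishes at $r=0$ and has negative derivative $\frac{rh\,\xi^{\prime}}{(1-\xi)^2}$.
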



Following \cite{WZ}, we introduce another function
$F$ in the following way. First we define $x=\sqrt{rh}$ and a
nonnegative function $y$ of $r$ by
\begin{equation}
y(0)=0, \ \ \ \ \ (x^{\prime}(r))^{2}-(y^{\prime}(r))^{2}=\frac{h}{4r}, \
\ \ \ y' > 0.
\end{equation}
Since $x(r)$ is strictly increasing, we may
define $F(x)$ by $y=F(x)$ with $x \in [0,+\infty)$. Extend $F$
to $(-\infty,+\infty)$ by letting $F(x)=F(-x)$. Starting with such a
$F$, one can recover the metric $\omega$. In fact it is more
convenient to use $p(x)=\sqrt{1-(F^{\prime}(x))^{2}}=\frac{1}{1-\xi}$.

\begin{proposition} [\textbf{the $p$ function}] \label{pfuncthm}
Suppose $n\geq 1$. Then any smooth function $p(x)$ defined on $[0,+\infty)$ satisfying
\begin{align}
p(0)=1,\ p^{\prime}(0)=0, \ p^{\prime\prime}(0)<0,
\ \ p(s)>0,\ \ p^{\prime}(s)<0,\ \forall s>0, \ \text{and}\ \int_{1}^{+\infty} \frac{p(\tau)}{\tau} d\tau=\infty,
\end{align}
corresponds to a metric in $\mathcal{N}_n$.
\end{proposition}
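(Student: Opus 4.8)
The plan is to reduce everything to the $\xi$-function characterization of Proposition \ref{propA2} via the identity $p=\frac{1}{1-\xi}$, i.e. $\xi=1-\frac1p$, where $\xi$ is the function defined in (\ref{def of xi}). The only genuine work is to reconstruct the change of variables between $x$ and $r$ from the prescribed $p(x)$, to transport the stated conditions on $p$ into the conditions $\xi(0)=0,\ \xi'<0$ on $\xi(r)$, and to confirm that the metric so produced is actually defined on all of $\mathbb{C}^n$ and is complete.

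First I would set up the change of variables. Differentiating $x=\sqrt{rh}$ and using $rh'/h=-\xi$ together with $h=x^2/r$ gives the separable equation $\frac{dx}{dr}=\frac{x}{2rp(x)}$, equivalently $2p(x)\,\frac{dx}{x}=\frac{dr}{r}$. Near the origin $p\to 1$, so the solutions behave like $x\sim c\sqrt r$ for a one-parameter family indexed by $c>0$; this constant (equivalently $h(0)=c^2$) is exactly the scaling freedom in the correspondence. Solving the equation yields a smooth strictly increasing map $r\mapsto x(r)$ on $(0,\infty)$. The role of the hypothesis $\int_1^{\infty}\frac{p(\tau)}{\tau}\,d\tau=\infty$ is precisely this: integrating $2p\,\frac{dx}{x}=\frac{dr}{r}$ shows that $r\to\infty$ as $x\to\infty$ if and only if $\int^{\infty}\frac{p}{\sigma}\,d\sigma=\infty$, so that the inverse map sweeps out all of $[0,\infty)$ and the reconstructed metric lives on the whole $\mathbb{C}^n$ rather than on a ball.

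Next I would transport the conditions. Set $\xi(r)=1-\frac{1}{p(x(r))}$. From $p(0)=1$ we get $\xi(0)=0$, and for $r>0$, since $p$ decreases with $p<1$, we have $\xi<0$; moreover by the chain rule $\xi'(r)=\frac{p'(x)}{p^2}\frac{dx}{dr}=\frac{p'(x)\,x}{2rp^3}<0$ because $p'<0$. The delicate point is the endpoint $r=0$, where $\frac{dx}{dr}\sim\frac{c}{2\sqrt r}$ blows up while $p'(x)\sim p''(0)x\sim p''(0)c\sqrt r$ vanishes; their product has the finite limit $\xi'(0)=\tfrac12 p''(0)c^2<0$. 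This is exactly where the hypothesis $p''(0)<0$ enters, and checking this endpoint balance is the main obstacle of the argument. I would also verify smoothness in the variable $r$ at the origin: since $p$ arises from the even extension of $F$, it is a smooth function of $x^2=rh$, and $x^2$ is smooth in $r$, so $\xi=1-1/p$ is smooth in $r$, with the reconstructed $h,f$ smooth and positive at $0$.

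Finally, having established $\xi(0)=0$ and $\xi'<0$ on $[0,\infty)$, Proposition \ref{propA2} gives directly that the form (\ref{Kahler form}) defines a Kähler metric with $BI<0$. Completeness is then automatic and need not be imposed separately: since $\xi\le 0$, the formula $h(r)=h(0)\exp\!\big(\int_0^r -\tfrac{\xi}{t}\,dt\big)$ shows $h$ is nondecreasing, whence $h\ge h(0)>0$ and $\int_0^{\infty}\frac{\sqrt h}{\sqrt r}\,dr\ge\sqrt{h(0)}\int_0^{\infty}\frac{dr}{\sqrt r}=\infty$, which is the completeness condition (\ref{being complete}). Thus the metric lies in $\mathcal{N}_n$. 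The correspondence is bijective modulo scaling because $\xi=1-1/p$ is invertible, matching any such $p$ with the $\xi$-description, uniformly in $n$ since neither the construction nor Proposition \ref{propA2} depends on the dimension.
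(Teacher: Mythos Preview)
The paper does not give a standalone proof of this proposition; it is presented as the $p$-function reformulation of the $\xi$-characterization in Proposition~\ref{propA2} via the relation $p=\frac{1}{1-\xi}$, and your reduction is exactly this intended route. Your computation of the change of variables $\frac{dx}{dr}=\frac{x}{2rp(x)}$, the identification of $\int_1^\infty \frac{p}{\tau}\,d\tau=\infty$ as the condition that $r$ sweeps out all of $[0,\infty)$, the translation of $p(0)=1$, $p'<0$ into $\xi(0)=0$, $\xi'<0$, and the verification of completeness from $h\ge h(0)$ are all correct and match what the paper leaves implicit.

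One caution on your smoothness step at the origin. You argue that $\xi$ is smooth in $r$ ``since $p$ arises from the even extension of $F$'', but in the proposition $p$ is the \emph{given} data, so this is circular. What is actually needed is that $p$ be a smooth function of $x^2$, i.e.\ that all odd derivatives of $p$ vanish at $0$; the hypothesis $p'(0)=0$ alone does not guarantee this (for instance $p'''(0)\neq 0$ would produce an $r^{3/2}$ term in $h$). In the paper's setup this evenness is automatic because $p=\sqrt{1-(F')^2}$ with $F$ extended evenly, and that is the implicit assumption behind the statement. You should either add this hypothesis explicitly or note that it is inherited from the $F$-construction, rather than invoking $F$ after the fact. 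Also note that Proposition~\ref{propA2} is stated for $n\ge 2$, so for $n=1$ you should observe directly that $A=\xi'/h<0$ suffices since $B$ and $C$ are absent.
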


Let $v=rf$. We may rewrite the distance function $s$ in (\ref{being complete}) and $\operatorname{Vol}(B(s))$ in terms of $p(x)$
\begin{equation}
s=\int_{0}^{x} p(\tau) d \tau, \ \  v \doteq \int_{0}^{x} 2\tau
p(\tau) d\tau. \label{vbyp}
\end{equation}

\begin{align}
\operatorname{Vol}(B(s))=c_n v^n=c_n (\int_{0}^{x}  2\tau
p(\tau)d\tau)^n.  \label{volformula}
\end{align}
Here $c_n$ is the volume of unit ball in
$\mathbb{R}^{2n}$.

Note that $A$, $B$, and $C$ defined in (\ref{ABC one form}) can be expressed in terms
of $p$ as
\begin{equation}
A=\frac{p^{\prime}}{2xp^{3}}, \ \ \ B=\frac{x^2}{v^2}-\frac{1}{vp},\
\ \ C=\frac{2}{v}-\frac{2x^2}{v^2}.
\end{equation}

\subsection{Examples in terms of the \texorpdfstring{$p$}{TEXT} function}

\begin{example}[\textbf{$0<p(+\infty)<1$}]
\label{Euclidean}

Any smooth function $p$ on $[0,+\infty)$ with
\begin{align}
p(0)=1,\ \ p^{\prime}(0)=0, \ \ p^{\prime}<0, \ \text{and}\
0<p(+\infty)<1  \label{peuc}
\end{align}
defines a complete K\"{a}hler metric on
$\mathbb{C}^n$ with Euclidean volume growth.  In fact, Lemma \ref{eucgrowth} shows that (\ref{peuc}) characterizes all metrics in $\mathcal{N}_n$ with Euclidean volume growth. Any metric in this class satisfies quadratic average curvature decay, i.e. there exist two constants $C_1$ and $C_2$ such that
\begin{equation}
-\frac{C_1}{{(1+s)}^2} \leq \frac{1}{\operatorname{Vol}(B(O,s))}
\int_{B(O,s)} R(s) w^n \leq -\frac{C_2}{{(1+s)}^2}, \ \ \ \forall s>0. \label{avesense}
\end{equation}
This follows from the proof of Proposition \ref{curvdecay} where we derive some general results on curvature decay.
\end{example}

Example \ref{Euclidean} consists of metrics with its curvature component $A$ going to $-\infty$ along a sequence of points tending to infinity, see Lemma \ref{Aunbound} in below. We also have rough estimates $\limsup_{s
\rightarrow +\infty} s^2 A=0$, $\lim_{s \rightarrow +\infty} s^2
B=0$, and $C \sim -\frac{C_1}{s^2}$ for $s$ large enough. As a simple
example, we may pick $p(x)=\frac{x+1}{2x+1}$ outside a small neighborhood of
$x=0$, which produces a K\"ahler metric with negative sectional curvature
outside a compact set. Moreover, one has $A \sim -\frac{C_1}{s^3}$, $B \sim
-\frac{C_1}{s^3}$, and $C \sim -\frac{C_1}{s^2}$.

Function theory on Example \ref{Euclidean} is completely determined by the asymptotics of $p(x)$.
Let $p(\infty) \doteq \lim_{x \rightarrow \infty} p(x)$. Then the coordinate function $z_i$ has growth like $s^{p(\infty)}$ for large $s$, and the anticanonical section has the polynomial growth
order of $n(1-p(\infty))$. Precisely, we have
\begin{equation}
\lim_{s \rightarrow \infty} \frac{\log |z_i|}{ \log s}=p(\infty), \ \
\lim_{s \rightarrow \infty} \frac{\log \sqrt{\operatorname{det}(g)}}
{ \log s}=n(1-p(\infty)).
\end{equation}
Furthermore, the asymptotic volume growth and the average curvature decay are related as
\begin{equation}
\lim_{s \rightarrow \infty}\frac{\operatorname{Vol}(B(s))}{c_n
s^{2n}}=(\frac{1}{p(\infty)})^{n},
\end{equation}
\begin{equation}
\lim_{s \rightarrow \infty} \frac{s^2}{\operatorname{Vol}(B(O, s))}
\int_{B(s)} (-R) \, {\omega}^n=n^2(1-p(\infty)).
\end{equation}

\begin{example}
[expanding K\"{a}hler-Ricci solitons by Cao
{\cite{C97}}] They belong to Example \ref{Euclidean} and are of particular interest in the study of Type-III solutions to K\"ahler-Ricci flow. Cao \cite{C97} constructed a family of $U(n)$-invariant complete expanding K\"{a}hler-Ricci solitons on $\mathbb{C}^n$. It
is a continuous family of positively curved and negatively curved
metrics connected by the Euclidean one. We only consider those negatively curved ones. Following the calculation in Chen-Zhu \cite{CZ05}, we may check that those metrics are
of negative complex curvature operator. Moreover, it satisfies $A \sim -\frac{C_1}{s^6}$, $B \sim -\frac{C_1}{s^4}$, and $C \sim -\frac{C_1}{s^2}$.
\end{example}

\begin{example}
[\textbf{$p(\infty)=0$}]   \label{pinf=0}

Let $p(x)=\frac{1}{(\log x)^{\alpha}}$ where $0<\alpha \leq 1$ for
large $x$. One can smoothly extend $p(x)$ to $[0,+\infty)$ and it
defines a metric in $\mathcal{N}_n$.

\end{example}

\textbf{Case 1:} $\alpha=1$.

In this case, it follows that $\operatorname{Vol}(B(s))$ grows like
$(s^{2}\log{s})^{n}$ and $|z|$ grows like $\log s$. Moreover, the
curvatures decay as $A \sim -\frac{1}{2s^{2} \log s}, B \sim
-\frac{1}{2s^{2} \log s}$, and  $C \sim -\frac{1}{s^2}$. Also
it is direct to check that the average curvature decays
quadratically.
\begin{equation}
\frac{1}{\operatorname{Vol}(B(O,s))} \int_{B(O,s)} R(s) w^n \sim
-\frac{C_1}{{(1+s)}^2}.
\end{equation}

Note that there are two known examples in this case.

When $n=1$, Milnor \cite{Milnor} gave an example on the complex plane $\mathbb{C}$ which
is $g=ds^2+(s\log s)^2 d\theta^2$ for $s>2$ under geodesic normal
coordinates.

For any $n \geq 1$, Seshadri \cite{Sesha} gave an example of
$U(n)$-invariant K\"{a}hler metric with negative sectional curvature
on $\mathbb{C}^n$. Let $f(r)=e^{r}$. Then
$\xi(r)=-r-1+\frac{1}{r+1}$, $x=\sqrt{r(r+1)} e^{\frac{r}{2}}$,
$p(r)=\frac{r+1}{r^2+3r+1}$, $v=re^{r}$, and $s$ grows like
$e^{\frac{r}{2}}$. So we have $p(x) \sim \frac{1}{2\log x}$. It remains to check that $\omega$ is indeed of negative sectional
curvature. As the result in \cite{Sesha} is based on an incomplete proof of
Klembeck \cite{K}, we argue by Theorem \ref{mainApp} instead. For any $r \geq 0$,
\begin{equation}
A=-\frac{1}{(r+1)e^{r}}-\frac{1}{(r+1)^3 e^{r}}, \,
B=-\frac{1}{e^r(r+1)}, \, C=-\frac{2}{e^r}
\end{equation}
and
\begin{equation}
D_{n}=\frac{n}{2(n-1)} AC-B^2=\frac{1}{e^{2r}(r+1)^2}
[\frac{n}{n-1}(r+1+\frac{1}{r+1})-1]>0.
\end{equation}
Therefore Seshadri's example
has negative complex curvature operator.

\textbf{Case 2:} $0<\alpha<1$.

In this case, $\operatorname{Vol}(B(s))$ grows like
$(s^{2}(\log{s})^{\alpha})^{n}$, and $\log |z|=\frac{(\log
x)^{1-\alpha}}{1-\alpha}$. This implies that for any $\beta>1$ and
$0<\alpha<1$ we have $(\log s)^{\beta}<|z|<s^{\alpha}$ for large
$s$. Its curvature components satisfy $A \sim -\frac{\alpha}{2s^{2} \log s}, B
\sim -\frac{\alpha}{2s^{2} \log s}$, and $C \sim
-\frac{1}{s^2}$.

\subsection{Estimates on volume growth and curvature decay}

For the sake of convenience, we introduce another characterization
of $\mathcal{N}_n$. Let us consider the distance function $
s=\int_0^r \frac{\sqrt{h(t)}}{2\sqrt{t}}dt = \int_{\ 0}^{\sqrt{r}}
\sqrt{h(\tau^2)}d\tau $. Clearly, $s$ is a $C^{\infty }$ change of
variable of $\sqrt{r}$ on $[0,\infty )$. So $r=r(s)$ is a smooth
function and we may define $\psi (s)$ on $[0,\infty )$ by letting
$\psi (s)=-\xi(r(s))$.
\begin{lemma}[\textbf{the $\psi$ characterization}]\label{psilemma}
Any $\psi \in C^{\infty}[0,\infty )$ which satisfies
\begin{align}
\ \ \ \psi (0)=\psi'(0)=0,  \ \ \psi ''(0)>0, \ \ \psi'(s)>0 \ \
\forall s>0, \ \  \int_1^{\infty } \! \frac{1}{s+\phi (s)}ds =
\infty,   \label{psifunction}
\end{align}
where $\phi (s)=\int_0^s \psi (\tau )d\tau $ generates a metric in $\mathcal{N}_n$.
\end{lemma}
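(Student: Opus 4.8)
The plan is to reduce the $\psi$-characterization to the already-established $p$-characterization of Proposition \ref{pfuncthm} through an explicit change of variables, and the conceptual core is to recognize how the two gauges are linked. Along the common distance parameter $s$ one has $\psi = -\xi$, while the $p$-gauge satisfies $p = \frac{1}{1-\xi}$; combining these at a common radial point gives $p = \frac{1}{1+\psi}$. Since $s = \int_0^x p\,d\tau$ yields $\frac{ds}{dx} = p$, we obtain $\frac{dx}{ds} = 1 + \psi(s)$ and hence the bridge relation $x(s) = s + \phi(s)$, which also explains why $s + \phi(s)$ surfaces in the completeness integral of (\ref{psifunction}).

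First I would fix $\psi$ satisfying (\ref{psifunction}) and set up this change of variables intrinsically from $\psi$ alone. Because $\psi(0) = 0$ and $\psi' > 0$ for $s > 0$, we have $\psi \ge 0$, so $x(s) = s + \phi(s)$ has $x'(s) = 1 + \psi(s) \ge 1$ and defines a smooth strictly increasing diffeomorphism of $[0,\infty)$ onto itself with smooth inverse $s(x)$ and $s(x) \to \infty$ as $x \to \infty$. I would then define $p(x) = \frac{1}{1+\psi(s(x))}$ and verify the hypotheses of Proposition \ref{pfuncthm} term by term using $\frac{ds}{dx} = p$. The normalization $p(0) = 1$ and the signs $p(s) > 0$, $p'(s) < 0$ for $s > 0$ are immediate from $\psi(0) = 0$, $\psi > 0$, $\psi' > 0$, and the completeness condition transforms exactly: since $p(x)\,dx = ds$ and $x = s + \phi(s)$, we get $\int_1^\infty \frac{p(x)}{x}\,dx = \int \frac{ds}{s + \phi(s)} = \infty$, the divergence being insensitive to the finite contribution near the lower limit.

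The step I expect to require the most care is the matching of the second-order data at the origin, namely deducing $p'(0) = 0$ and $p''(0) < 0$ from $\psi'(0) = 0$ and $\psi''(0) > 0$. The signs cannot be read off directly, because differentiating in $x$ reintroduces the factor $\frac{ds}{dx} = p$: one has $\frac{dp}{dx} = p\,\frac{dp}{ds}$ and then $\frac{d^2 p}{dx^2} = p\big(p\,\frac{d^2 p}{ds^2} + (\tfrac{dp}{ds})^2\big)$. Evaluating at $s = 0$, where $p(0) = 1$ and $\frac{dp}{ds}(0) = 0$, collapses these to $p'(0) = 0$ and $p''(0) = \frac{d^2 p}{ds^2}(0) = -\psi''(0) < 0$, exactly as required. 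With every hypothesis of Proposition \ref{pfuncthm} verified, that proposition delivers a metric in $\mathcal{N}_n$, completing the argument; running the substitution backwards then confirms that the $\psi$ recovered from this metric via $\psi(s) = -\xi(r(s))$ is the prescribed one, so the correspondence is bijective up to scaling.
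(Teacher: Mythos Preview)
Your proposal is correct and follows essentially the same route as the paper: both reduce the $\psi$-characterization to the already established $p$- (equivalently $\xi$-) characterization via the relations $\psi=-\xi$, $p=\frac{1}{1+\psi}$, and $x=s+\phi(s)$, with the key observation that the completeness integral $\int_1^\infty \frac{1}{s+\phi(s)}\,ds$ is exactly $\int_1^\infty \frac{p(x)}{x}\,dx$. The paper's proof is extremely terse (it merely asserts that ``the first part is clear'' and that the last integral is equivalent to the $p$-integral), whereas you have carefully filled in the verification of each hypothesis of Proposition~\ref{pfuncthm}, including the second-order conditions $p'(0)=0$ and $p''(0)<0$ at the origin, which the paper leaves entirely to the reader.
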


\begin{proof}[Proof of Lemma \ref{psilemma}]
Note that the first part is clear, while the last integral is equivalent to
$\int_1^{\infty } \frac{p(x)}{x}dx =\infty$. Conversely, having a
function $\psi (s) \in \Psi$, one can construct a unique function
$\psi (r)$. Indeed we have $\psi
(s) =-\xi (r)$, where $\frac{ds}{dr}=\frac{\sqrt{h}}{2\sqrt{r}}$,
and $h$ is determined by $\xi$ as before.
\end{proof}

So from now on we will use $\psi (s)$ as our generating function for $\mathcal{N}_n$. It is easy to get examples where $A$ is not bounded from above.

\begin{lemma}[perturbations with unbounded curvature]\label{Aunbound}
Given any metric in $\mathcal{N}_n$ with a generating function $\psi$ as in Lemma \ref{psilemma}. We may find a new generating function $\psi_1$ so that the radial curvature $A$ of the corresponding metric is unbounded.
\end{lemma}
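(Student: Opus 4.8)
The plan is to first translate the radial curvature $A$ into the language of the generating function $\psi$ from Lemma~\ref{psilemma}, and then to perturb $\psi$ by a sequence of narrow, increasingly steep monotone ``steps'' whose total height is summable. The conceptual core is a clean formula for $A$ as a function of $s$. Combining $A=-\frac1h\big(\frac{rh'}{h}\big)'$ from (\ref{ABC one form}) with $\xi=-\frac{rh'}{h}$ from (\ref{def of xi}) gives $A=\frac1h\frac{d\xi}{dr}$; substituting $\xi(r)=-\psi(s(r))$ with $\frac{ds}{dr}=\frac{\sqrt h}{2\sqrt r}$, and using $x=\sqrt{rh}=s+\phi(s)$ (which follows from $\frac{ds}{dx}=p=\frac{1}{1+\psi}$, where $\phi(s)=\int_0^s\psi(\tau)\,d\tau$), I obtain the identity
\[
A(s)=-\frac{\psi'(s)}{2\,(s+\phi(s))}.
\]
Since $\psi'>0$ we have $A<0$, and $A$ is unbounded (that is, $A\to-\infty$ along a sequence) exactly when $\frac{\psi'(s)}{s+\phi(s)}$ is unbounded as $s\to\infty$. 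This reduces the problem to a one-variable construction.

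Next I would build $\psi_1$ by adding upward steps to $\psi$. Fix $s_k=k$ and smooth monotone functions $\theta_k$ with $\theta_k\equiv0$ on $(-\infty,s_k]$, $\theta_k\equiv1$ on $[s_k+w_k,\infty)$, $\theta_k'\ge0$, and $\int\theta_k'=1$; by the mean value property there is a point $s_k^{*}\in(s_k,s_k+w_k)$ with $\theta_k'(s_k^{*})\ge 1/w_k$. Set $\psi_1=\psi+\sum_k 2^{-k}\theta_k$. Taking $w_k<1$, the supports $[s_k,s_k+w_k]$ are disjoint and contained in $[1,\infty)$, so $\psi_1$ is smooth, coincides with $\psi$ near $0$ (hence $\psi_1(0)=\psi_1'(0)=0$ and $\psi_1''(0)>0$ persist), and $\psi_1'=\psi'+\sum_k 2^{-k}\theta_k'>0$ on $(0,\infty)$. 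Because $\psi_1\le\psi+\sum_k 2^{-k}=\psi+1$, one gets $\phi_1\le\phi+s$ and therefore $s+\phi_1(s)\le 2\,(s+\phi(s))$; this yields $\int_1^\infty\frac{ds}{s+\phi_1}\ge\frac12\int_1^\infty\frac{ds}{s+\phi}=\infty$, so $\psi_1$ still verifies every hypothesis of Lemma~\ref{psilemma} and generates a metric in $\mathcal N_n$.

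Finally, applying the displayed identity to $\psi_1$ at $s_k^{*}$ and using $\psi_1'(s_k^{*})\ge 2^{-k}/w_k$ together with $s_k^{*}+\phi_1(s_k^{*})\le 2\,(s_k^{*}+\phi(s_k^{*}))\le C_k$, where $C_k:=2\,(s_k+1+\phi(s_k+1))$ is determined entirely by the original $\psi$, gives $-A(s_k^{*})\ge \frac{2^{-k}}{2\,w_k\,C_k}$. Choosing $w_k=\frac{2^{-k}}{2k\,C_k}$ then forces $-A(s_k^{*})\ge k\to\infty$, so the radial curvature of the metric generated by $\psi_1$ is unbounded, as claimed. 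The step I expect to be the crux is precisely this balancing of two scales: the step heights $2^{-k}$ must be summable so that $\phi_1$ stays comparable to $\phi$ (thereby preserving completeness and the bound on $s+\phi_1$), while the widths $w_k$ must be chosen far smaller still, calibrated against the fixed quantity $C_k$ coming from the unperturbed metric, so that $\psi_1'$ overwhelms $s+\phi_1$ along the sequence $\{s_k^{*}\}$.
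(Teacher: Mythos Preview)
Your proof is correct and follows essentially the same approach as the paper: both derive the identity $-A=\frac{\psi'}{2(s+\phi)}$ and then perturb $\psi$ by a sequence of narrow monotone steps whose total height is controlled, so that $s+\phi_1$ remains comparable to $s+\phi$ (preserving membership in $\mathcal{N}_n$) while $\psi_1'$ is made arbitrarily large along a sequence. The only cosmetic difference is that the paper bounds the added piece by $\psi$ itself (so $\phi_1\le 2\phi$), whereas you use absolutely summable step heights $2^{-k}$ (so $\psi_1\le\psi+1$ and $s+\phi_1\le 2(s+\phi)$); both bounds serve the identical purpose.
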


\begin{proof}[Proof of Lemma \ref{Aunbound}]
Start from any metric associated with a corresponding $\psi$. Note that $\psi$ in
an increasing function. Take any increasing sequence $\{s_k\}$ going
to infinity, with $s_1>0$. Now let us define a smooth and nonnegative
function $q$ on $[0,\infty )$ such that
$$q(s_k)> 4k ( s_k +  \phi (s_k)),$$
for each $k$,  while $\psi_q :=\int_0^sq \leq \psi$. This is
possible since we may take $q$ to be zero except at a smaller and
smaller neighborhood of each $s_k$. We have $\phi_q :=\int_0^s
\psi_q \leq \phi$. So if we consider the function $\psi_1=\psi +
\psi_q$, we have its $\phi_1=\phi +\phi_q \leq 2\phi$, and thus
$\frac{1}{s+\phi_1}$ has infinite integral over $[1,\infty )$ and
gives a metric in ${\mathcal N}_n$. For this metric, its
corresponding curvature term
\[ -A_1 =\frac{\psi_1'} {2s+2\phi_1}
\geq \frac{\psi '+q}{4s+4\phi } = -\frac{1}{2}A+ \frac{q}{4(s+\phi
)}.\] Therefore  $A_1(s_k) > k$ for each $k$ and is unbounded.
\end{proof}

\begin{remark}
The proof of Lemma \ref{Aunbound} can be used to show that for any given metric $g$ in $\mathcal{N}_n$, there are some metrics with equivalent volume growth and equivalent average scalar curvature decay functions, but the scalar curvature of the nearby metrics goes to infinity at any prescribed rate along a sequence going to infinity. Here two positive smooth functions $F_1$ and $F_2$ on $[a,\infty )$ are said to be \textbf{equivalent}, if there are constants $C_1, C_2>0$ such that $C_1 F_1(s) \leq F_2(s) \leq C_2 F_1(s)$ for any $s\geq a$ large.
\end{remark}

Recall that the geodesic ball centered at the origin $B(O, s)$ satisfies
$\operatorname{Vol}(B(O, s))=c_n v^n$ and $v=rf$ by (\ref{volformula}). Now we have the following basic relation
\begin{align}
v^{\prime}(s)=2\sqrt{hr}, \ \ v^{\prime\prime}(s) = 2 + 2 \psi (s).  \label{voleqn}
\end{align}
This is the main advantage of introducing the $\psi$ characterization for $\mathcal{N}_n$.
Since $\psi \geq 0$, we get $v''\geq 2$ and $v \geq s^2$. So the volume is at least of Euclidean growth. In fact we may prove:

\begin{lemma} [a monotonicity result on volume growth]\label{monotonicity}
For any given metric in $\mathcal{N}_n$, we have
\begin{align}
\frac{v(s)}{s^{2}}, \ \ \text{hence}\ \frac{\operatorname{Vol}(B(O, s))}{\operatorname{Vol}_{\mathbb{R}^{2n}}(B(s))}
\end{align} is increasing with respect to $s>0$.
\end{lemma}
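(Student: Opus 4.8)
The plan is to reduce everything to the single basic relation $v''(s) = 2 + 2\psi(s)$ from (\ref{voleqn}), together with the properties of the generating function recorded in Lemma \ref{psilemma}, namely $\psi(0)=0$, $\psi \geq 0$, and $\psi'>0$ on $(0,\infty)$. First I would observe that, since $\operatorname{Vol}(B(O,s)) = c_n v(s)^n$ and $\operatorname{Vol}_{\mathbb{R}^{2n}}(B(s)) = c_n s^{2n}$, the second ratio equals $(v(s)/s^2)^n$; as $v(s)/s^2 > 0$ for $s>0$ and $t \mapsto t^n$ is increasing on $(0,\infty)$, it suffices to prove that $v(s)/s^2$ is increasing. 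Differentiating gives $(v/s^2)' = (s v'(s) - 2v(s))/s^3$, so the entire statement reduces to the inequality $s v'(s) \geq 2 v(s)$ for all $s>0$.

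Next I would integrate the relation $v''=2+2\psi$ twice, using the initial values $v(0)=0$ and $v'(0)=2\sqrt{hr}=0$ (since $r=0$ at the origin). Writing $\phi(s)=\int_0^s \psi(\tau)\,d\tau$ as in Lemma \ref{psilemma}, this yields the explicit formulas $v'(s)=2s+2\phi(s)$ and $v(s)=s^2+2\int_0^s \phi(\tau)\,d\tau$. Substituting into $s v'(s)-2v(s)$ and cancelling the $s^2$ terms, the target inequality collapses to
\[
s\phi(s) \geq 2\int_0^s \phi(\tau)\,d\tau, \qquad s>0.
\]

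Finally I would set $H(s)=s\phi(s)-2\int_0^s \phi(\tau)\,d\tau$, so that $H(0)=0$ and $H'(s)=s\phi'(s)-\phi(s)=s\psi(s)-\phi(s)$. Since $\psi$ is increasing, $\phi(s)=\int_0^s \psi(\tau)\,d\tau \leq \int_0^s \psi(s)\,d\tau = s\psi(s)$, hence $H'\geq 0$ and therefore $H\geq 0$, which is exactly the inequality above; the inequality is strict for $s>0$ because $\psi$ is strictly increasing there, giving strict monotonicity of $v/s^2$. There is no genuine analytic obstacle in this argument: its entire content is the clean second-order relation (\ref{voleqn}), which is the payoff of introducing the $\psi$ characterization, and the only point requiring a moment's care is recognizing that the averaged bound $\phi(s)\leq s\psi(s)$ is precisely what the monotonicity of $\psi$ delivers.
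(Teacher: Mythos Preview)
Your argument is correct and is essentially the paper's own proof, carried out in the $\psi$-variable rather than the $p$-variable: the paper also reduces to the positivity of $sx-v$ (which equals your $H(s)$ since $v'=2x$) and checks its derivative is nonnegative, using that $p$ is decreasing, which is exactly the condition $\psi$ increasing via $p=\frac{1}{1+\psi}$. Your version is marginally more self-contained because it never leaves the $s$-coordinate, but the two computations are the same identity after the change of variables $\frac{dx}{ds}=\frac{1}{p}$.
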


\begin{proof} [Proof of Proposition \ref{monotonicity}]
By (\ref{voleqn}) we solve
\[
\frac{d}{ds} (\frac{v}{s^2})=\frac{2s (s\sqrt{rh}-v)}{s^4}.
\] It suffices to show $s\sqrt{rh}-v=sx-v$ is increasing. To that end we derive with respect to $x$ instead, by (\ref{vbyp})
\[
\frac{d}{dx} (xs-v)=\int_0^x p(\tau)d\tau-xp(x)>0.
\]
In the above we use $p(x)$ is decreasing by Theorem \ref{pfuncthm}.
\end{proof}

We may use (\ref{voleqn}) to characterize metrics with Euclidean volume growth in $\mathcal{N}_n$.
\begin{lemma}[a characterization of Euclidean volume growth]\label{eucgrowth}
Any K\"ahler metric in $\mathcal{N}_n$ is of Euclidean volume growth if and only if any of the following holds
\begin{enumerate}
\item $\liminf_{x \rightarrow \infty} p(x)>0$;
\item $\limsup_{r \rightarrow \infty} -\xi(r)<+\infty$;
\item $\limsup_{s \rightarrow \infty} \psi(s)<+\infty$.
\end{enumerate}
\end{lemma}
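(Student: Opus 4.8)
The plan is to funnel all three conditions into the single quantity $\psi$, which is tied directly to the volume through (\ref{voleqn}), and to treat the mutual equivalence of (1)--(3) as a change-of-variables bookkeeping.

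First I would record the dictionary between the three generating functions. Since $p=\frac{1}{1-\xi}$ we have $-\xi=\frac{1}{p}-1$, and by definition $\psi(s)=-\xi(r(s))$; moreover $r\mapsto x=\sqrt{rh}$ and $r\mapsto s$ are smooth increasing changes of variable carrying $+\infty$ to $+\infty$. Because $p$ is strictly decreasing (Theorem \ref{pfuncthm}), $\frac{1}{p}$ is increasing, so its $\limsup$ is a genuine limit equal to $\frac{1}{p(\infty)}$. Chasing these relations through the two substitutions yields
\[
\liminf_{x\to\infty}p(x)>0 \iff \limsup_{r\to\infty}(-\xi(r))<+\infty \iff \limsup_{s\to\infty}\psi(s)<+\infty,
\]
so (1), (2), and (3) are equivalent. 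The remaining and substantive task is to tie any one of them --- I would use (3) --- to Euclidean volume growth.

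Next I would express the volume purely in terms of $\psi$. Integrating $v''=2+2\psi$ twice from the initial data $v(0)=v'(0)=0$ (which follow from (\ref{vbyp}) and $\psi(0)=0$) gives
\[
v(s)=s^{2}+2\int_{0}^{s}\phi(\sigma)\,d\sigma,\qquad \phi(s)=\int_{0}^{s}\psi(\tau)\,d\tau.
\]
Since $\operatorname{Vol}(B(O,s))=c_{n}v(s)^{n}$ by (\ref{volformula}), and $v/s^{2}\ge 1$ always, Euclidean volume growth is exactly the boundedness of $v(s)/s^{2}$; by Lemma \ref{monotonicity} this ratio is increasing, so its $\limsup$ equals its supremum and no subsequential subtleties arise. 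I would then prove the core equivalence $v(s)/s^{2}$ bounded $\iff$ (3). The easy direction: if $\psi\le C$ then $\phi(s)\le Cs$, hence $\int_{0}^{s}\phi\le\frac{C}{2}s^{2}$ and $v/s^{2}\le 1+C$. For the converse I would use that $\psi$ is increasing (its derivative is positive on $(0,\infty)$), so if it is unbounded then $\psi(s)\to\infty$; comparing $\phi(s)\ge\frac{s}{2}\psi(s/2)$ shows $\phi(s)/s\to\infty$, and a further integration forces $\frac{1}{s^{2}}\int_{0}^{s}\phi\to\infty$, whence $v/s^{2}\to\infty$ and the growth is strictly super-Euclidean.

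I expect the main obstacle to lie in this last converse step --- arranging the two successive integral comparisons so that mere unboundedness of $\psi$ genuinely forces $v/s^{2}\to\infty$, rather than only unboundedness along a subsequence. The monotonicity of $\psi$ together with the monotonicity of $v/s^{2}$ from Lemma \ref{monotonicity} is precisely what resolves this, converting every $\limsup$ into an honest limit and letting the comparison $\phi(s)\ge\frac{s}{2}\psi(s/2)$ propagate cleanly to the double integral.
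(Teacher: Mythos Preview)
Your proposal is correct and follows essentially the same route as the paper: both arguments reduce to the volume identity $v(s)=s^{2}+2\int_{0}^{s}\phi$, exploit the monotonicity of $\psi$ (hence of $\phi/s$ and of $v/s^{2}$), and pass back and forth between boundedness of $\psi$ and boundedness of $v/s^{2}$ by elementary integral comparisons. The only cosmetic differences are that you run the hard direction contrapositively via $\phi(s)\ge\tfrac{s}{2}\psi(s/2)$ whereas the paper shows $\phi(s)/s$ is increasing and argues by contradiction, and that you spell out the dictionary $(1)\Leftrightarrow(2)\Leftrightarrow(3)$ explicitly while the paper leaves it implicit and only treats condition~(3).
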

\begin{proof}[Proof of Lemma \ref{eucgrowth}]
After integrating the basic equation (\ref{voleqn}), we have
\begin{align}
v= s^2 + 2\int_0^s \phi (\tau )d \tau,\ \ \text{where}\ \phi(s)=\int_0^{s} \psi(\tau)d\tau.  \label{volumeint}
\end{align}
Therefore the Euclidean volume growth condition is equivalent to
\begin{align}
\int_0^s \phi (\tau )d \tau \leq C_1 s^2   \label{euccondi1}
\end{align} for some constant $C_1>0$.

It follows that $\phi(s) \leq C_2 s$ for some constant $C_2>0$. To see it, we first observe $\frac{\phi(s)}{s}$ is increasing with respect to $s>0$.
\[
\frac{d}{ds}(\frac{\phi(s)}{s})=\frac{\psi(s)s-\int_0^s \psi(t)dt}{s} \geq 0,
\] where we use $\psi(0)=0$ and $\psi'(s)>0$ for any $s>0$. If $\limsup_{s \rightarrow \infty} \frac{\phi(s)}{s}=+\infty$, then for any $M>0$, we may find $s_0>0$ so that $\phi(s)> M s$ for any $s>s_0$. This implies
\[
\limsup_{s \rightarrow \infty}  \frac{\int_0^s \phi(\tau)d\tau}{s^2} \geq \limsup_{s \rightarrow \infty}  \frac{\frac{M}{2}(s^2-s_0^2)}{s^2} \geq \frac{M}{2}.
\] As $M$ is arbitrarily large, we get a contradiction with (\ref{euccondi1}).

Now we conclude $\limsup_{s \rightarrow \infty} \psi(s)<C_3$ for some $C_3>0$, as $\phi(s)=\int_0^s \psi(\tau)d\tau \leq C_2 s$.

\end{proof}

Recall in Example \ref{Euclidean} and Example \ref{pinf=0}, the volume growth satisfies
$v \sim s^2 (\ln s)^{\alpha }$, where $0 \leq \alpha \leq 1$. Making use of Theorem \ref{pfuncthm}, it is easy to construct examples where $v \sim s^2 \ln s \ln
\ln s$, or more generally, $v\sim s F_n(s)$, where $ F_n(s)=l_0(s)
l_1(s) l_2(s) \cdots l_n(s)$, with
\[
l_0(s) = s, \ \ l_n(s) = \ln(l_{n-1}(s)).
\]
On the other hand, the following result shows the volume growth in the above cases is in some sense optimal.

\begin{proposition} [a lower estimate on the volume growth]
\label{volest} For any given
$\alpha>1$ and any metric in $\mathcal{N}_n$, we have
\begin{align}
\liminf_{s \rightarrow +\infty} \frac{v(s)}{s^{2}(\ln s)^{\alpha}}=0.
\end{align}
For instance, given any $\epsilon >0$, there is no metric in $\mathcal{N}_n$ with the growth of $\operatorname{Vol}(B(O, s))$ equivalent to $s^{2n}(\ln s)^{n+\epsilon}$.
\end{proposition}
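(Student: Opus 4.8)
The plan is to work entirely within the $\psi$ characterization of Lemma \ref{psilemma} and to exploit the tension between the volume formula (\ref{volumeint}) and the completeness condition (\ref{psifunction}), both of which are governed by $\phi(s)=\int_0^s\psi(\tau)\,d\tau$. First I would recall from (\ref{volumeint}) that $v(s)=s^2+2\int_0^s\phi(\tau)\,d\tau$, and abbreviate $\Phi(s)=\int_0^s\phi(\tau)\,d\tau$. Since $s^2/\big(s^2(\ln s)^\alpha\big)\to 0$, proving the proposition reduces to showing $\liminf_{s\to\infty}\Phi(s)/\big(s^2(\ln s)^\alpha\big)=0$.

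Next I would argue by contradiction. Suppose this liminf were positive; then there would be constants $c>0$ and $s_0>0$ with $\Phi(s)\ge c\,s^2(\ln s)^\alpha$ for all $s\ge s_0$. The key step is to convert this integrated lower bound into a pointwise lower bound on $\phi$ itself. Because $\psi(s)>0$ on $(0,\infty)$ by (\ref{psifunction}), the function $\phi$ is strictly increasing, so $\Phi(s)=\int_0^s\phi(\tau)\,d\tau\le s\,\phi(s)$, whence $\phi(s)\ge \Phi(s)/s\ge c\,s(\ln s)^\alpha$ for every $s\ge s_0$.

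It then follows that $s+\phi(s)\ge c\,s(\ln s)^\alpha$ for all large $s$, and therefore
\[
\int_{s_0}^{\infty}\frac{ds}{s+\phi(s)}\le \frac{1}{c}\int_{s_0}^{\infty}\frac{ds}{s(\ln s)^\alpha}<\infty,
\]
since $\alpha>1$. This contradicts the completeness condition $\int_1^{\infty}\frac{ds}{s+\phi(s)}=\infty$ in (\ref{psifunction}), which establishes the liminf statement. Finally, the ``for instance'' assertion follows by specialization: if $\operatorname{Vol}(B(O,s))$ were equivalent to $s^{2n}(\ln s)^{n+\epsilon}$, then by (\ref{volformula}) the quantity $v=\big(\operatorname{Vol}(B(O,s))/c_n\big)^{1/n}$ would be equivalent to $s^2(\ln s)^{1+\epsilon/n}$ with $1+\epsilon/n>1$, which would force the corresponding liminf to be a positive constant, contrary to what we have just proved.

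The calculation is short, so the only real content lies in the middle step: recognizing that the monotonicity of $\phi$ (itself a consequence of $\psi>0$) upgrades the averaged bound $\Phi(s)\gtrsim s^2(\ln s)^\alpha$ to the pointwise bound $\phi(s)\gtrsim s(\ln s)^\alpha$, which is precisely what renders $1/(s+\phi)$ integrable once $\alpha>1$. This is the step that pits volume growth directly against completeness, and I expect it to be the crux of the argument rather than any of the routine estimates surrounding it.
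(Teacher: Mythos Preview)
Your proof is correct and follows essentially the same approach as the paper: both arguments use the monotonicity of $\phi$ to convert the lower bound on $v$ (equivalently on $\Phi=\int_0^s\phi$) into a pointwise lower bound $\phi(s)\gtrsim s(\ln s)^\alpha$, and then obtain a contradiction with the completeness condition $\int_1^\infty \frac{ds}{s+\phi(s)}=\infty$. The paper phrases the argument slightly more generally (for any increasing $\phi_0$ with $\int_1^\infty \phi_0^{-1}<\infty$, one cannot have $v\ge \epsilon s\phi_0(s)$ for all $s\ge 1$) and uses the equivalent inequality $v\le s^2+2s\phi(s)$, but the substance is identical.
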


\begin{proof} [Proof of Proposition \ref{volest}]

In fact, for any given positive increasing function $\phi_0(s)$ such that
$\int_1^{\infty}\frac{1}{\phi_0(s)} ds < \infty$, we cannot have
$v\geq \epsilon s \phi_0(s)$ for all $s\geq 1$.

Indeed, it follows from (\ref{volumeint}) that we have
\[
v= s^2 + 2\int_0^s \phi (\tau )d \tau \leq s^2 + 2s\phi (s),
\]
as $\phi$ is increasing. So we get $ \phi_0 \leq 2s+ 2\phi$, and thus $
\frac{1}{s+\phi }\leq \frac{2}{\phi_0}$  having finite
integration over $[1,\infty)$, a contradiction.

\end{proof}

For any $U(n)$-invariant function $H$ on ${\mathbb C}^n$, let us
denote by $\overline{H}= \frac{1}{c_nv^n} \int_{B(s)}H\omega^n $ the
average function of $H$ over the geodesic balls. Then we have
\begin{align}
\overline{H}(s) = \frac{1}{v^n} \int_0^s H d(v^n).  \label{averagedef}
\end{align}

\begin{proposition} [estimates on the average curvature decay]
\label{curvdecay} For any metric in $\mathcal{N}_n$, we have:
\begin{align}
\limsup_{s \rightarrow +\infty} \frac{s^2 }{(\ln s)}
\overline{R}=0, \,\,\ \ \ \  \limsup_{s \rightarrow +\infty} s^2 \overline{R}<0.  \label{avgdecay}
\end{align} Here $\overline{R}$ denote the average (minus) scalar curvature function defined in (\ref{averagedef}).
\end{proposition}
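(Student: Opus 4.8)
The plan is to reduce the whole statement to a single closed form for $\overline{R}$ as a function of the volume $v(s)$ and the generating data, and then to extract the two asymptotics from the elementary properties of $v$ recorded in Lemmas \ref{monotonicity}, \ref{eucgrowth} and Proposition \ref{volest}. For the reduction I would use that the Ricci form $\rho=-\sqrt{-1}\partial\bar\partial\log\det(g_{i\bar j})$ is closed and that $\det g=hf^{\,n-1}$ depends only on $r$. Applying Stokes' theorem to $\rho\wedge\omega^{n-1}$ over $B(s)$ turns $\int_{B(s)}(-R)\,\omega^{n}$ into a flux of $\tfrac{d}{ds}\log\det g$ across the geodesic sphere $\partial B(s)$, whose area is $\tfrac{d}{ds}\operatorname{Vol}(B(s))=c_n n v^{n-1}v'$. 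Using $v'=2x$ and $v''=2+2\psi$ from (\ref{voleqn}), together with $\tfrac{d}{ds}\log h=\tfrac{2\psi}{x}$ and $\tfrac{d}{ds}\log f=\tfrac{2x}{v}-\tfrac{2}{x}$, this gives
\[
-\overline{R}(s)=\frac{C_0\,n}{v}\Big[\psi+(n-1)\Big(\frac{x^2}{v}-1\Big)\Big],\qquad C_0>0,
\]
in which both bracket terms are nonnegative, so $\overline{R}\le 0$; the normalization $C_0$ can be pinned down by comparison with the $n=1$ Gauss--Bonnet identity $\int_{B(s)}K\,dA=2\pi-L'(s)$.

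The one elementary inequality driving both claims, valid because $p$ is decreasing, is
\[
0\le\frac{x^2}{v}-1=\frac{\int_0^x 2\tau(1-p(\tau))\,d\tau}{\int_0^x 2\tau p(\tau)\,d\tau}\le\frac{1-p(x)}{p(x)}=\psi ,
\]
which traps the bracket between $\psi$ and $n\psi$. For the lower estimate (the claim $\limsup_s s^2\overline{R}<0$) I would keep only the $\psi$-term, $-\overline{R}\ge C_0 n\psi/v$, and show $\liminf_s s^2\psi/v>0$ by a short dichotomy via Lemma \ref{eucgrowth}. In the Euclidean case $p(\infty)>0$ one has $s^2/v\to p(\infty)$ and $\psi\to(1-p(\infty))/p(\infty)$, so $s^2\psi/v\to 1-p(\infty)>0$. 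Otherwise $p(\infty)=0$, whence $w:=v/s^2\to\infty$ by Lemma \ref{monotonicity}, and using $sx\ge v/2$ together with $\psi\ge x^2/v-1$ gives $s^2\psi/v\ge (sx/v)^2-1/w\ge\tfrac14-o(1)$. Either way $\liminf_s s^2(-\overline{R})>0$.

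The main obstacle is the first claim $\limsup_s\tfrac{s^2}{\ln s}\overline{R}=0$, equivalently $\liminf_s\tfrac{s^2}{\ln s}(-\overline{R})=0$, since here the curvature may oscillate and no pointwise argument is available. I would argue by contradiction and bootstrap against the volume bound. Assume $-\overline{R}\ge\delta\,\tfrac{\ln s}{s^2}$ for all large $s$. The upper half of the displayed inequality gives $-\overline{R}\le C_0 n^2\psi/v$, hence $\psi\ge c\,\tfrac{v\ln s}{s^2}$; feeding in $v\ge s^2$ yields $\psi\ge c'\ln s$, so $v''=2+2\psi\ge c'\ln s$ and two integrations give $v\ge c_1 s^2\ln s$. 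Re-inserting this improved bound into $\psi\ge c\,v\ln s/s^2$ yields $\psi\ge c''(\ln s)^2$, and integrating $v''=2+2\psi$ twice again gives $v\ge c_2 s^2(\ln s)^2$ for all large $s$. This contradicts Proposition \ref{volest} with $\alpha=2$, which states $\liminf_s v/\big(s^2(\ln s)^2\big)=0$, so the assumption fails and the first claim holds.

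I expect the delicate points to be making the Stokes/flux computation and its normalization fully rigorous and keeping the constants under control through the two integration passes of the bootstrap; by contrast the asymptotic dichotomy for the lower estimate and the monotone-$p$ inequality are routine. The $n=1$ case is in any event a direct Gauss--Bonnet computation that serves both as a check on $C_0$ and as the base model for the general argument.
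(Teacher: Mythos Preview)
Your approach is correct and takes a genuinely different route from the paper in several places. Both arguments first reduce to the fact that $\overline{-R}$ is comparable to $\psi/v$. The paper obtains this by pointwise algebra on the curvature components $A,B,C$, proving $n\,C_0\le\overline{-R}\le n\big(1+\tfrac{n^2(n-2)}{(n-1)^2}\big)C_0$ with $C_0=\psi/v$; your Stokes/flux computation gives the closed form $\overline{-R}=\tfrac{\mathrm{const}\cdot n}{v}\big[\psi+(n-1)(\tfrac{x^2}{v}-1)\big]$ together with the clean sandwich $0\le\tfrac{x^2}{v}-1\le\psi$ coming from the monotonicity of $p$, which is more conceptual. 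For the first claim the paper assumes $\psi\ge\epsilon\,\tfrac{\ln s}{s^2}v$ and iterates roughly $\ln s$ times on $\phi$ via $\phi(s)\ge b\ln s\,\phi(s/e)$ to reach super-polynomial growth and contradict the completeness integral $\int\tfrac{ds}{s+\phi}=\infty$ directly; your two-pass bootstrap on $v''=2+2\psi$ is shorter because it offloads the final contradiction to Proposition~\ref{volest}, and this is the main gain of your route. Conversely, for the second claim the paper's argument is the one-liner $\tfrac{s^2\psi}{v}\ge\tfrac{\psi}{1+\psi}$ (using $v\le s^2(1+\psi)$, which follows from the convexity of $\phi$ and the monotonicity of $\psi$), which is considerably simpler than your dichotomy on $p(\infty)$.
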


\begin{proof} [Proof of Proposition \ref{curvdecay}]

Recall that the geodesic ball $B(O, s)$ satisfies
$\operatorname{Vol}(B(O, s))=c_n v^n$ by (\ref{volformula}). We introduce
\begin{align}
-A=\frac{\psi '}{v'}, \ \  \  C_0 \doteq -(B+\frac{1}{2}C)=\frac{\psi }{v}.
\end{align}

So we have
\[
\overline{-A}= nC_0 - \frac{n}{n-1} \overline{C_0}, \ \ \
\overline{C_0} \leq \frac{n}{n-1} C_0.
\]
The inequality is because
$\psi $ is an increasing function. We have that (the minus of) the
scalar curvature equal to
\begin{align*}
-R & =-A - 2(n-1)B - \frac{1}{2} n(n-1)C \\
& =  -A + 2(n-1)C_0 - \frac{1}{2}(n-1)(n-2)C\\
& =  -A + n(n-1) C_0 + (n-1)(n-2)B.
\end{align*}
So we have
\begin{align}
-A + 2(n-1)C_0 \leq (-R) \leq  -A + n(n-1) C_0,
\end{align}
and when $n=2$ the above is actually an equality. So when $n\geq 2$,
we have
\begin{align}
nC_0 \leq (\overline{-R}) \leq n(1+ \frac{n^2(n-2)}{(n-1)^2})C_0.
\end{align}
That is, $-R$ is equivalent to $-A+C_0$ and the average scalar
curvature is equivalent to $C_0$. So we will focus on these
functions now.

It is easy to see that $\liminf_{s \rightarrow +\infty} s^2
(\overline{-R})>0$, since from (\ref{voleqn})
\[
\frac{s^2 \overline{-R}}{n} \geq \frac{s^2 \psi}{v} \geq \frac{s^2
\psi}{s^2+s^2 \psi}=\frac{\psi}{1+\psi}.
\]

Assume the first inequality in (\ref{avgdecay}) does not hold, namely, there exists some $\epsilon>0$ so that $\psi (s) \geq \epsilon \frac{\ln s}{s^2}
v(s)$ for any $s\geq 1$. For any $0<a<1$, we have from (\ref{voleqn})
\[
v(s) \geq \int_0^s \phi  \geq \int_{as}^s \phi \geq (1-a)s\phi (as).
\]
Similarly,
\[
\phi (s) = \int_0^s \psi \geq (1-a)s \psi (as).
\]
Therefore, we have
\begin{align*}
\phi (s) & \geq (1-a)s \psi (as) \ \geq \ (1-a) \epsilon
\frac{\ln (as)}{a^2s} v(as) \\
& \geq (1-a)\epsilon \frac{\ln (as)}{a^2s} (1-a)as \phi (a^2s)\\
& = \frac{\epsilon (1-a)^2}{a} \ln (as) \phi (a^2s).
\end{align*}
By taking $a=\frac{1}{\sqrt{e}}$, we get that
\[
\phi (s) \geq b \ \ln s \ \phi (\frac{s}{e})
\]
for any $s\geq e$, where $b>0$ is a constant. For any $s\geq e$,
there exists a unique positive integer $n\geq 2$ such that $e^n\leq
s<e^{n+1}$, or equivalently, $n\leq \ln s<n+1$. By the above
inequality, we get
\begin{align*}
\phi (s) & \geq  b \ \ln s \ \phi(\frac{s}{e}) \\
& \geq  b\ \ln s \ b \ \ln \frac{s}{e} \ \cdots \ b \ \ln \frac{s}{e^{n-1}} \ \phi (\frac{s}{e^n}) \\
& \geq  b^n n! c
\end{align*}
where $c>0$ is a lower bound of $\phi$ over $[1,e]$. By the
Stirling's formula, we have $n! \geq (\frac{n}{e})^n$. Hence
\begin{equation*}
    \phi (s) \geq c \ (\frac{b}{e}n)^n \geq c \ (\frac{b}{2e}\ln s)^{\frac{1}{2}
 \ln s} = c\ s^{\frac{1}{2} \ln (\frac{b}{2e}\ln s) } .
\end{equation*}
So for sufficiently large constant $K>0$, we would have $\phi (s)
\geq c  s^2$ for any $s\geq K$. Hence $\frac{1}{\phi }$ will have
finite integral over $[K,\infty )$, a contradiction.

\end{proof}

In contrast, we cannot expect upper estimates on volume growth and curvature for a general metric in $\mathcal{N}_n$. In the following, we show that given any increasing convex function $F(s)$ on $[0,\infty )$, there are metrics in $\mathcal{N}_n$ whose volume growth or average (minus) scalar curvature decay are greater than $F$ along some sequence of points $\{s_k\} \rightarrow \infty$. As before, we use the $\psi$ characterization of $\mathcal{N}_n$ obtained in Lemma \ref{psilemma}. Given any $\psi$ satisfying (\ref{psifunction}), there is a corresponding metric $g$ in $\mathcal{N}_n$ up to a positive constant multiple. Recall $\phi (s)=\int_0^s \psi (\tau )d\tau$. Then (\ref{psifunction}) can be restated for $\phi$. Namely, a smooth functions $\phi(s)$ on $[0, \infty)$ such that
\begin{align}
& \phi (0)=\phi '(0)=\phi^{\prime\prime}(0)=0, \phi^{\prime\prime\prime}(0)>0;  \nonumber\\
& \phi '(s)>0, \phi^{\prime\prime}(s)>0,\ \ \forall \ s>0,\ and\ \int_1^{\infty } \frac{1}{s+\phi(s)} ds = \infty.  \label{phicondi}
\end{align}
will generate a metric $g_{\phi}$ in $\mathcal{N}_n$.



The following result shows that one cannot expect an upper estimate on volume growth.

\begin{proposition}[no pointwise upper volume estimate]
\label{uppervolume}
Let $F(s)$ be any positive function on $[0, \infty )$. Then there exists a metric $g_{\phi }$ in $\mathcal{N}_n$ and a sequence $\{ s_k\} \rightarrow  \infty $ such that $v(s_k) > F(s_k)$ for each $k$.
\end{proposition}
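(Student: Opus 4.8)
The plan is to exploit the fact that the completeness condition $\int_1^{\infty}\frac{ds}{s+\phi(s)}=\infty$ only constrains the \emph{average} growth of $\phi$ and does \emph{not} prevent $\phi$ from attaining arbitrarily large values along a sparse sequence. I will work with the generating function $\psi=\phi'$ from Lemma \ref{psilemma} and build a ``staircase'' profile for $\psi$: it ramps up to successively higher, essentially constant plateaus, where the plateau heights are chosen large enough to make $v$ beat $F$, while each plateau is made long enough to keep the completeness integral divergent.

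Recall from (\ref{volumeint}) that $v(s)=s^2+2\int_0^s\phi(\tau)\,d\tau$, so to force $v(s_k)>F(s_k)$ it suffices to make $\int_0^{s_k}\phi$ large. Since $\phi=\int_0^s\psi$ with $\psi$ increasing, a large value of $\phi$ near $s_k$ requires a large slope $\psi$ there; the only danger is that, $\psi$ being increasing, this large slope persists and might destroy the divergence of $\int\frac{ds}{s+\phi}$. The key elementary observation that resolves this is that a constant slope never hurts completeness: if $\psi\equiv M$ on an interval then $\phi$ is affine there and $\int\frac{ds}{s+\phi}\sim\frac{1}{1+M}\int\frac{ds}{s}$, so over a sufficiently long plateau this contributes at least $1$ to the integral, no matter how large $M$ is.

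Concretely, I will construct $\psi$ inductively on intervals $[a_k,a_{k+1}]$. First fix a valid starting profile on $[0,a_1]$ (with $\psi(0)=\psi'(0)=0$, $\psi''(0)>0$, $\psi'>0$ for $s>0$) ending at some height $M_0$. At step $k$, given $a_k$ and $M_{k-1}=\psi(a_k)$, set $s_k=a_k+2$ and choose $M_k>\max(M_{k-1}+1,\,F(s_k))$. On $[a_k,a_k+1]$ let $\psi$ increase smoothly and strictly from $M_{k-1}$ to $M_k$; on $[a_k+1,a_{k+1}]$ let $\psi$ increase slowly and strictly while staying in $[M_k,2M_k]$, where $a_{k+1}$ is taken so large that
\begin{equation*}
\int_{a_k+1}^{a_{k+1}}\frac{ds}{s+\phi(s)}\ge\frac{1}{1+2M_k}\log\frac{(1+2M_k)a_{k+1}+\phi(a_k+1)}{(1+2M_k)(a_k+1)+\phi(a_k+1)}\ge 1 ,
\end{equation*}
which is possible because the lower bound tends to $+\infty$ as $a_{k+1}\to\infty$. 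On $[a_k+1,s_k]$ one has $\phi(\tau)\ge M_k(\tau-a_k-1)$, hence $v(s_k)\ge 2\int_{a_k+1}^{a_k+2}\phi\ge M_k>F(s_k)$. Since $M_k\ge M_{k-1}+1$ the heights increase to $+\infty$ and $a_k\to\infty$, so $\psi$ is a legitimate generating function satisfying (\ref{psifunction}) with $\int_1^{\infty}\frac{ds}{s+\phi}\ge\sum_k 1=\infty$; the associated metric $g_\phi\in\mathcal{N}_n$ then satisfies $v(s_k)>F(s_k)$ along $s_k=a_k+2\to\infty$.

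The main obstacle is conceptual rather than computational: one must recognize that completeness is compatible with $\phi$ growing without bound along a sequence, provided the growth is amortized over long plateaus (this is exactly what distinguishes the present pointwise-along-a-sequence statement from the global lower estimate of Proposition \ref{volest}). Once this is seen, the only remaining care is to maintain strict monotonicity $\psi'>0$ and the prescribed behavior at $s=0$ while keeping $\psi$ nearly constant on each plateau; this is handled by letting $\psi$ increase by a negligible amount across each plateau, which leaves all the estimates intact since they use only $M_k\le\psi\le 2M_k$ there.
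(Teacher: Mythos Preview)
Your proposal is correct and follows essentially the same strategy as the paper: build the generating function inductively, alternating between ``slow'' phases (your plateaus, where $\psi$ is nearly constant so $\phi$ is nearly linear and $\int\frac{ds}{s+\phi}$ picks up a definite contribution) and ``fast'' phases (your ramps, where $\psi$ jumps up so that $\phi$, and hence $v$, overtakes $F$ at the next marked point). The paper carries this out by writing $\phi$ as a piecewise quadratic function---which amounts to making $\psi=\phi'$ piecewise linear, i.e.\ exactly a staircase---so the two constructions differ only in whether one parametrizes by $\phi$ or by $\psi$; your presentation is arguably the cleaner of the two.
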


\begin{proof} [Proof of Proposition \ref{uppervolume}]

It suffices to construct a function $\phi$ which satisfies (\ref{phicondi}) and
\[
\int_0^{s_k}\phi(\tau)d\tau > F(s_k)
\] for a sequence $\{s_k\} \rightarrow \infty $.

We begin with the following observation. Given any $a, b$, and $\lambda >0$, let $f(s)=\epsilon (s-a)^2+\lambda (s-a)+b$ on $(a, +\infty)$. Then for $\epsilon>0$ sufficiently small, there exists some $t>a$ such that $\int_a^t \frac{1}{f}=1$.

Now let us construct a piecewise quadratic function $f$ on $[1,\infty)$. Let $(a,b)=(s_1,f_1)=(s_1,2F(s_1))$ and $\lambda =\lambda_1>3$. Choose $\epsilon_1>0$ sufficiently small and then there exists $t_1>s_1$ such that $\int_{s_1}^{t_1}\frac{1}{f}=1$ for $f(s)=\epsilon_1 (s-s_1)^2+\lambda_1 (s-s_1)+f_1$. Let $s_2=2t_1$, and choose $\mu_1> \lambda_1+2\epsilon_1(t_1-s_1)$ such that $\mu_1> \frac{2}{s_2^2}F(s_2)$. Let $f(s)= (s-t_1)^2+\mu_1(s-t_1)+f(t_1)$ on $[t_1, s_2]$. Then we have $\int_{t_1}^{s_2}f > F(s_2)$, and the two one-sided derivatives of $f$ satisfies $f'_-(t_1) < f'_+(t_1)$. Take  $\lambda_2$ so that $\lambda_2 > s_2+\mu_1$. Repeat the process for $(a,b)=(s_2, f(s_2))$ and $\lambda_2$. We get parabolas on $[s_2,t_2]$ and $[t_2, s_3]$, where $s_3=2t_2$. Continuing with the process, we obtain a strictly increasing, piecewise quadratic function $f$ on $[1,\infty )$ which is strictly convex in each interval $[s_k, s_{k+1}]$ and with left-hand derivative strictly less than right-hand derivative at each break point $t_k \in (s_k, s_{k+1})$. Furthermore,
\begin{align*}
\int_1^{\infty }\frac{1}{f(s)}ds >\sum_{i=1}^{\infty} \int_{s_i}^{t_i} \frac{1}{f}=\infty,\ \ \ \int_1^{s_k} f(\tau)d\tau>\int_{t_{k-1}}^{s_{k}} f> F(s_k).
\end{align*}

Clearly one can smooth out the corners $\{t_k\}_{k=1}^{\infty}$ of $f$ to get the desirede function $\phi$ with $\int_1^{s_k} f(\tau)d\tau>F(s_k)$ so that all conditions in (\ref{phicondi}) are fulfilled.
\end{proof}

By Proposition \ref{curvdecay}. the average scalar curvature function $\overline{R}(s)$ is bounded from above by $-\frac{1}{s^2}$. Moreover, there exists no constant $\epsilon >0$ such that $\overline{R}(s) \leq -\epsilon \frac{\ln s}{s^2}$ for all $s\geq 1$. Following the proof of Proposition \ref{uppervolume}, we show that one cannot expect a pointwise lower bound on the average scalar curvature.

\begin{proposition}[no lower estimates on average curvature]\label{uppercurvature}
Let $F(s)$ be any positive function on $[0, \infty )$. Then there exists a metric $g$ in $\mathcal{N}_n$ and a sequence $\{ s_k\} \rightarrow  \infty $ such that $\frac{\psi(s_k)}{v(s_k)}  > F(s_k)$ for each $k$. Namely $\overline{R}(s_k)$ approaches to $-\infty$ as $\{ s_k\} \rightarrow  \infty $ even though $\limsup_{s \rightarrow \infty} \frac{s^2}{\ln s}\overline{R}(s)=0$.
\end{proposition}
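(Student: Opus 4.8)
The plan is to reduce the statement to a single construction in terms of the characterization of $\mathcal{N}_n$ by the function $\phi$ in (\ref{phicondi}). Writing $\psi=\phi'$ and recalling from (\ref{volumeint}) that $v=s^2+2\int_0^s\phi(\tau)\,d\tau$, the proof of Proposition \ref{curvdecay} shows that $\overline{-R}$ is equivalent to $C_0=\frac{\psi}{v}$, so $\overline{R}$ is equivalent to $-\frac{\psi}{v}$. Hence it suffices to produce a smooth $\phi$ satisfying all conditions in (\ref{phicondi}) together with a sequence $\{s_k\}\to\infty$ for which $\frac{\psi(s_k)}{v(s_k)}>F(s_k)$. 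After replacing $F$ by $\max\{F(s),s\}$ we may assume $F(s)\to\infty$, and then the displayed inequality forces $\overline{R}(s_k)\to-\infty$, while Proposition \ref{curvdecay} guarantees $\limsup_{s\to\infty}\frac{s^2}{\ln s}\overline{R}(s)=0$ automatically since our metric lies in $\mathcal{N}_n$.

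I would build $\phi$ by the same alternating scheme used in the proof of Proposition \ref{uppervolume}, but now tuned to make the \emph{slope} $\psi=\phi'$, rather than the integral $\int_0^{s}\phi$, large at the marked points. On each ``slow'' interval I would let $\phi$ be essentially linear, inheriting the slope $\Lambda_{k-1}$ reached at the end of the previous phase (a small positive second derivative keeps $\phi''>0$), and I would extend this interval until $\int\frac{1}{s+\phi}\,ds$ accumulates one more unit; since there $\phi\approx\Lambda_{k-1}s$ and hence $\int\frac{ds}{s+\phi}\approx\frac{1}{\Lambda_{k-1}}\ln(\cdot)$, this requires pushing the next marked point $s_k$ out to roughly $s_{k-1}e^{\Lambda_{k-1}}$. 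At $s_k$ I would then insert a short ``spike'': over a tiny interval $[s_k-\delta_k,s_k]$ I ramp $\phi'$ up from $\Lambda_{k-1}$ to a large value $\Lambda_k$ with a big (but still positive) second derivative, choosing $\delta_k\le 1/\Lambda_k$ so that $\phi$, and therefore $v$, change negligibly across the spike.

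The core estimate is that, because the spike is short, $v(s_k)$ is governed by the preceding slow phase, $v(s_k)\approx \Lambda_{k-1}s_k^2$, \emph{before} the value $\Lambda_k$ is selected; I can therefore choose $\Lambda_k>\Lambda_{k-1}s_k^2 F(s_k)$ so that $\frac{\psi(s_k)}{v(s_k)}=\frac{\Lambda_k}{v(s_k)}>F(s_k)$. Iterating the nested choices—$s_k$ large enough for completeness, then $\Lambda_k$ large enough for the ratio, then $s_{k+1}$ large enough for completeness again—produces finite but rapidly growing sequences $\{s_k\}$ and $\{\Lambda_k\}$. Finally I would smooth the corners of this piecewise construction while preserving $\phi',\phi''>0$ and glue a cubic-type initial piece on $[0,1]$ realizing $\phi(0)=\phi'(0)=\phi''(0)=0$ and $\phi'''(0)>0$; these adjustments are routine and affect none of the asymptotic estimates.

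The main obstacle is precisely the interplay between the strict monotonicity of $\psi=\phi'$ and the completeness condition $\int_1^\infty\frac{1}{s+\phi}\,ds=\infty$: once a spike raises the slope to $\Lambda_k$ it can never come back down, so $\phi$ grows linearly with that large slope and each unit of the divergent integral $\int\frac{1}{s+\phi}$ costs an interval of logarithmic length comparable to $\Lambda_k$. The resolution is to take the slow phases exponentially long in the current slope, which coexists with the growth requirement because the ratio we care about is read off at the \emph{end} of each fresh spike, where $v$ is still controlled by the modest inherited slope $\Lambda_{k-1}$ rather than by the new $\Lambda_k$; keeping each spike short enough not to disturb $v$ is exactly what lets these two competing demands be satisfied simultaneously.
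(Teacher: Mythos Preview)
Your proposal is correct and matches the paper's intended approach: the paper itself offers no detailed proof, saying only that Proposition \ref{uppercurvature} follows by adapting the piecewise construction of Proposition \ref{uppervolume}, and your alternating slow-phase/short-spike scheme is precisely that adaptation, with the necessary twist that the fast segment must now be short so that $v(s_k)$ is governed by the inherited slope $\Lambda_{k-1}$ while $\psi(s_k)=\Lambda_k$ can be chosen freely afterward. Your final paragraph correctly identifies and resolves the only genuine tension in the argument, namely that the monotonicity of $\psi$ forces each subsequent slow phase to be exponentially long in $\Lambda_k$ to keep $\int\frac{ds}{s+\phi}$ divergent.
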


Next we discuss the correlation between volume growth and averaged scalar curvature decay. For $U(n)$-invariant K\"ahler metrics of $BI>0$ on $\mathbb{C}^n$, there is a precise connection (see \cite[Theorem 7 on p.538]{WZ}). Unfortunately the situation becomes more subtle for $BI<0$. We recall Example \ref{Euclidean}. If a metric in $\mathcal{N}_n$ has Euclidean volume growth, then its average scalar curvature is of quadratic decay. Namely, if $v \sim s^2$, then $\frac{\psi }{v} \sim \frac{1}{s^2}$. We present another proof of this fact. Notice that $\phi$ is convex. So we have
\begin{align}
\phi (s) =\phi (\frac{0+2s}{2}) \leq \frac{1}{2s}\int_0^{2s} \phi(\tau) d\tau \leq \frac{1}{2}(\phi (0)+\phi (2s)).  \label{convexcontrol}
\end{align}
As $v\sim s^2$, it follows from (\ref{voleqn}) that there exists a constant $C$ such that $\int_0^s\phi \leq Cs^2$, so the above (\ref{convexcontrol}) gives $\phi (s) \leq 2Cs$. That is, the graph of the function $\phi$ is underneath the line $y=2Cs$ on the $sy$-plane, but it must lie above all its tangent lines since it is convex. So the slope of the tangent line has to be less than or equal to $2C$, namely, $\psi \leq 2C$. Thus $C_0=\frac{\psi }{v}\leq \frac{2C}{v} \leq \frac{2C}{s^2}$, and hence $C_0\sim \frac{1}{s^2}$. Unfortunately the converse is not valid. Namely there are metrics in $\mathcal{N}_n$ with $v \sim s^2\ln s$ and $C_0\sim \frac{1}{s^2}$, as we have seen in Example \ref{pinf=0}.

Finally, we note that it is possible to obtain some correlation between volume growth and curvature decay under additional assumptions.

\begin{proposition}\label{upperassumingbound}
Suppose that a metric in $\mathcal{N}_n$ satisfies either of
\begin{enumerate}
\item The radical curvature component $A$ is bounded;
\item The average scalar curvature $\overline{R}$ defined in (\ref{averagedef}) is bounded.
\end{enumerate}
Then the volume growth is at most exponential, i.e., there exists a constant $C>0$ such that $v(s) \leq Ce^{C\,s}$ for any $s \geq 1$.
\end{proposition}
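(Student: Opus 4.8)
The plan is to reduce both hypotheses to a single differential inequality for the volume profile $v(s)$ and then integrate it. I work with the $\psi$ characterization of Lemma \ref{psilemma}, recording that $\psi \geq 0$ (since $\psi(0)=0$ and $\psi$ is increasing), so from (\ref{volumeint}) and (\ref{voleqn}) we have $v(0)=v'(0)=0$, $v'(s)=2s+2\phi(s)\geq 0$, and $v''(s)=2+2\psi(s)$, where $\phi(s)=\int_0^s\psi$. In particular $v$ is nonnegative and increasing.

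The first step is to show that under either hypothesis there is a constant $K>0$ with $\psi(s)\leq K v(s)$ for all $s\geq 0$. Under hypothesis (2) this is almost immediate: as recorded in the proof of Proposition \ref{curvdecay}, the average scalar curvature satisfies $nC_0\leq \overline{-R}$ with $C_0=\psi/v$, so a bound on $\overline{R}$ forces $\psi/v\leq K$. Under hypothesis (1) I use the identity $-A=\psi'/v'$ from the same proof; since $BI<0$ gives $0\leq -A$, boundedness of $A$ yields $\psi'(s)\leq K v'(s)$, and integrating from $0$ to $s$ using $\psi(0)=v(0)=0$ again produces $\psi(s)\leq K v(s)$.

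The second step feeds this bound into $v''=2+2\psi$ to obtain
\begin{equation}
v''(s)\leq 2+2K\,v(s),\qquad s\geq 0. \label{upperassum-key}
\end{equation}
Multiplying (\ref{upperassum-key}) by $v'(s)\geq 0$ and integrating from $0$ to $s$, the vanishing initial data remove the boundary contributions and give the energy estimate $(v'(s))^2\leq 4v(s)+2K(v(s))^2$. Once $v(s)\geq 1$ the quadratic term dominates, so $v'\leq \sqrt{4+2K}\,v$, i.e. $(\ln v)'\leq \sqrt{4+2K}$; integrating from the first instant $s_0$ with $v(s_0)=1$ yields $v(s)\leq e^{\sqrt{4+2K}\,(s-s_0)}$, and after absorbing the range $s\leq s_0$ (where $v\leq 1$) into the constant we obtain $v(s)\leq Ce^{Cs}$, which is the claim.

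The calculation is elementary once the reduction is in place, so the only delicate point is the first step: passing from each curvature assumption to the uniform comparison $\psi\leq Kv$. In case (1) this hinges on integrating the pointwise bound on $-A=\psi'/v'$ and exploiting the matching initial conditions $\psi(0)=v(0)=0$, while in case (2) it relies on the one-sided equivalence $nC_0\leq\overline{-R}$ from Proposition \ref{curvdecay}. Beyond keeping track of these constants and of the sign $v'\geq 0$ that legitimizes the energy integration, I do not anticipate any genuine obstacle.
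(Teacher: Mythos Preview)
Your proof is correct and, for case (2), essentially identical to the paper's argument: from $\overline{-R}\geq nC_0=n\psi/v$ one gets $\psi\leq Kv$, hence $v''\leq 2+2Kv$, and the energy trick $v''v'\leq(2+2Kv)v'$ integrates to $(v')^2\leq 4v+2Kv^2$, yielding $(\ln v)'\leq C$ once $v\geq 1$.

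Where you differ is in the treatment of case (1). The paper does \emph{not} reduce (1) to (2); instead it works one derivative higher: from $-A=\psi'/v'\leq C$ it reads off $v'''=2\psi'\leq 2Cv'$, multiplies by $2v''>0$, and integrates to obtain $(v'')^2\leq 4+2C(v')^2$, which after one more integration gives the explicit bound $v\leq \tfrac{1}{C}e^{\sqrt{2C}\,s}-\tfrac{1}{C}-\tfrac{2}{\sqrt{2C}}s$. Your route is cleaner: integrating $\psi'\leq Kv'$ from $0$ with the matching initial data $\psi(0)=v(0)=0$ immediately gives $\psi\leq Kv$, so case (1) collapses into case (2) and a single argument handles both. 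The paper's approach, by contrast, produces a sharper explicit constant in the exponent for case (1), but at the cost of a separate computation.
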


\begin{proof}[Proof of Proposition \ref{upperassumingbound}]
Since $v'=2s+2\phi$, we have $\psi = \frac{1}{2}(v''-1)\leq Cv$. Hence $v^{\prime\prime} \leq 2+2Cv$. Multiply on both sides by the positive function $2v'$ simultaneously and integrate. Then $ v'^2\leq 2v+2Cv^2\leq 2(C+1)v^2$ for any $s\geq 1$. Write $C_1^2=2(C+1)$, and we get $v'\leq C_1v$ for any $s\geq 1$, which leads to $v\leq C_2e^{C_1s}$.

If we assume that $-A=\frac{\psi '}{v'}\leq C$ instead, then $v'''\leq 2Cv'$. After multiplying by $2v''>0$ and integrating, we get $v \leq \frac{1}{C}e^{\sqrt{2C}s}-\frac{1}{C}-\frac{2}{\sqrt{2C}}s$.
\end{proof}

\section{Examples without holomorphic functions of slow growth}  \label{noholoslow}

In this section, we study examples of complete metrics on the complex plane $\mathbb{C}$ with negative Gauss curvature ($K \leq 0$ for short). The goal is to illustrate some non-standard behaviors in the growth of holomorphic functions.

We begin with
\[
\lambda(x)=\begin{cases}
\frac{1}{(1+x)^2}, \ \ \text{when}\ \ x>0;   \\
e^{x^2-2x},\ \ \text{when}\ \ x \leq 0.
\end{cases}
\]
We may check that $\lambda$ is a $C^2$ function with $\log \lambda$ convex on $\mathbb{R}$.

Similarly, we can construct $\lambda$ which is smooth and log-convex on $\mathbb{R}$. For example, we choose $\lambda(x)= \frac{1}{(1+x)^2}$ when $x>-\frac{1}{2}$. Then we find
\[
\log \lambda (-\frac{1}{2})=2\log 2, \ (\log \lambda)'(-\frac{1}{2}+)=-4.
\]
Thus $y=-4x-2+2\log 2$ is a tangent line of $\log \lambda$ at $x=-\frac{1}{2}$. When $x<-\frac{1}{2}$ we let $\lambda(x)=e^{A(x+\frac{1}{2})^2-8x-4+2\log 2}$ for $A>0$ large enough. Then $\lambda$ is smooth except one point $x=-\frac{1}{2}$. Note that we have $\lambda'(-\frac{1}{2}-)>\lambda'(-\frac{1}{2}+)$. Therefore we may mollify it on $(-\frac{1}{2}-\varepsilon,-\frac{1}{2}+\varepsilon)$ so that the resulting function is smooth and log-convex on $\mathbb{R}$. For simplicity, we still use $\lambda$ to stand for its mollification.

\begin{proposition}\label{exp_ex1}
Let $g=\lambda(x) (dx^2+dy^2)$ on $\C$. Then
    \begin{enumerate}
    \item  $g$ is complete with $K(g)<0$.
    \item   Any nonconstant holomorphic function grows at least exponentially in the sense that
    \begin{equation}
    \sup_{B_g(O, r)} \log (1+|z|)  \geq r, \ \ \ \forall r>0.  \label{expogrowth1}
    \end{equation}

    \item  For any $r>0$ large, $\operatorname{Vol}(B_g(O,r)) \ge C_1 e^{C_2 r}$ for some constants $C_1, C_2$.
    \end{enumerate}
\end{proposition}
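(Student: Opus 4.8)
The plan is to treat the three assertions separately, exploiting that the conformal factor $\lambda$ depends only on $x=\operatorname{Re}z$ and, decisively, that on the half-plane $\{x>0\}$ the metric is hyperbolic. For (1), since $\lambda=\lambda(x)$ the Gauss curvature is $K=-\frac{(\log\lambda)''(x)}{2\lambda(x)}$ by the formula $K=-\frac{\Delta\log u}{2u}$ recalled just before Lemma~\ref{sup-inf}. As $\log\lambda$ is strictly convex on each defining piece — one has $(\log\lambda)''=\frac{2}{(1+x)^2}>0$ where $\lambda=(1+x)^{-2}$ and $(\log\lambda)''=2A>0$ on the Gaussian piece, and strict convexity can be preserved through the mollification — we obtain $K<0$ everywhere. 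For completeness I would show that $d_g(O,\cdot)\to\infty$ as $|z|\to\infty$, so that closed $g$-balls are compact and Hopf--Rinow applies. Writing $\rho(x)=\int_0^x\sqrt{\lambda}$, projecting any path onto the $x$-axis gives $d_g(O,(x,y))\ge|\rho(x)|$, and $|\rho(x)|\to\infty$ as $|x|\to\infty$ because $\int_0^{\pm\infty}\sqrt{\lambda}=\infty$ in both directions; the remaining regime of bounded $x$ and $|y|\to\infty$ is handled by the uniform lower bound $\lambda\ge c_M>0$ on a strip $\{|x|\le M\}$, which forces any connecting path to have $g$-length at least $\sqrt{c_M}\,|y|$ unless it leaves the strip (where $\rho$ already makes the distance large).

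For (2), I would simply use the segment along the positive real axis: there $\sqrt{\lambda(x)}=\frac{1}{1+x}$, so the path $t\mapsto(t,0)$, $0\le t\le x$, has $g$-length $\int_0^x\frac{dt}{1+t}=\log(1+x)$, whence $d_g(O,(x,0))\le\log(1+x)$. Taking $x=e^{r}-1$ yields a point of $\overline{B_g(O,r)}$ at which $\log(1+|z|)=\log(e^{r})=r$; passing to $r'<r$ to sit inside the open ball if necessary then gives $\sup_{B_g(O,r)}\log(1+|z|)\ge r$.

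The heart of the argument, and the main obstacle, is (3), where a crude coordinate estimate badly underestimates the ball. The key observation is that on $\{x>0\}$ the substitution $X=1+x$ turns $g$ into $\frac{dX^2+dy^2}{X^2}$, so $(\{x>0\},g)$ is isometric to the region $\{X>1\}$ of the hyperbolic plane $\mathbb{H}^2$, with the origin corresponding to $(X,y)=(1,0)$. Since $\mathbb{H}^2$ has exponential volume growth, I would place a hyperbolic ball well inside this region and inside $B_g(O,r)$: take $c=(e^{r/2},0)$ and $B_{\mathbb{H}}(c,r/4)$. Its lowest point sits at $X=e^{r/2}e^{-r/4}=e^{r/4}>1$, so the ball lies in $\{x>0\}$; the vertical segment from $O$ to $c$ has $g$-length $\log(e^{r/2})=r/2$, and because hyperbolic balls are geodesically convex, every $p\in B_{\mathbb{H}}(c,r/4)$ satisfies $d_g(O,p)\le r/2+r/4<r$. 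As $g$ coincides with the hyperbolic metric there, $\operatorname{Vol}(B_g(O,r))\ge\operatorname{Vol}_{\mathbb{H}}\!\big(B_{\mathbb{H}}(c,r/4)\big)=2\pi(\cosh\tfrac r4-1)\ge C_1 e^{r/4}$, which is the claimed exponential lower bound.

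The delicate points to check carefully in (3) are precisely the two containments: that the chosen hyperbolic ball stays inside the hyperbolic region $\{X>1\}$ — so that the two metrics genuinely agree and the hyperbolic volume formula is legitimate — and that each of its points is $g$-reachable from $O$ within radius $r$, for which one must use the convexity of hyperbolic balls to ensure the connecting geodesics do not escape into $\{x\le0\}$ where $g$ is no longer hyperbolic. Once these are in place, the exponential volume growth is an immediate consequence of the standard formula for the area of a hyperbolic disk.
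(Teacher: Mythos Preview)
Your proof is correct, and parts (1) and (2) run parallel to the paper's argument: both establish completeness by projecting paths onto the $x$-axis and handling the bounded-$x$ case via a uniform lower bound on $\lambda$, and both prove (2) by pushing the point $(e^r-1,0)$ into $\overline{B_g(O,r)}$ via the horizontal segment of $g$-length $\log(1+x)$. The paper phrases this last step as a separate claim that $\overrightarrow{OA}$ is actually a \emph{minimal} geodesic (so $d_g(O,(p,0))=\log(1+p)$ exactly), but your upper bound already suffices for (2).

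Part (3) is where you genuinely diverge. The paper never invokes the hyperbolic model; instead it writes down an explicit coordinate triangle $D_a=\{0\le x\le a,\ |y|\le a-x\}$ with $a=e^r-1$, checks by elementary path estimates that $D_a\subset B_g(O,(\sqrt 2+1)r)$, and integrates $\lambda(x)\,dx\,dy$ over $D_a$ directly to get $\operatorname{Vol}D_a=2(e^r-1)-2r$. Your route is more conceptual: you recognize the substitution $X=1+x$ turning $(\{x>0\},g)$ into $\{X>1\}\subset\mathbb H^2$, then nest a hyperbolic ball $B_{\mathbb H}(c,r/4)$ with $c=(e^{r/2},0)$ inside $B_g(O,r)$ and appeal to the area formula $2\pi(\cosh\tfrac r4-1)$. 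The two delicate containments you flag---that the hyperbolic ball stays in $\{X>1\}$ and that the $c\to p$ geodesic does not escape into $\{x\le 0\}$---are exactly what needs checking, and your use of the convexity of hyperbolic balls handles the second cleanly. Your approach explains \emph{why} the volume is exponential (constant curvature $-1$ on the right half-plane), while the paper's direct integration gives a slightly sharper constant and avoids any auxiliary facts about $\mathbb H^2$.
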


We begin with a simple observation.

\begin{claim} \label{minigeo}
Let $A=(p,0)$ and we consider the distance between the origin $O$ and the curve $L_A=
\{x=p\}$. Then
\[
d_g(O,L_A) \doteq \inf_{Q \in L_A} d_g(O, Q) =d_g(O,A) =L_g(\overrightarrow{O A}).
\]
In particular, the segment $\overrightarrow{O A}$ is a minimal geodesic.
\end{claim}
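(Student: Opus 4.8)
The plan is to exploit that $g=\lambda(x)(dx^2+dy^2)$ depends only on the horizontal coordinate $x$, so that the length of any curve is bounded below by its horizontal displacement alone. First I would set $\Phi(x)=\int_0^x\sqrt{\lambda(s)}\,ds$, which is strictly increasing since $\lambda>0$. Writing $A=(p,0)$ and assuming $p>0$ for concreteness (the case $p<0$ is identical after replacing $\Phi(p)$ by $|\Phi(p)|$, and $p=0$ is trivial), the horizontal segment $\overrightarrow{OA}$, parametrized by $s\mapsto(s,0)$ with $s\in[0,p]$, has $g$-length $L_g(\overrightarrow{OA})=\int_0^p\sqrt{\lambda(s)}\,ds=\Phi(p)$. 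This already yields the upper bounds $d_g(O,L_A)\le d_g(O,A)\le\Phi(p)$.

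The core of the argument is the matching lower bound. For an arbitrary piecewise-$C^1$ curve $\gamma(t)=(x(t),y(t))$, $t\in[0,1]$, from $O=\gamma(0)$ to any point $Q=(p,q)\in L_A=\gamma(1)$, I would estimate
\begin{align*}
L_g(\gamma) &= \int_0^1 \sqrt{\lambda(x(t))}\,\sqrt{\dot{x}(t)^2+\dot{y}(t)^2}\,dt
\ge \int_0^1 \sqrt{\lambda(x(t))}\,|\dot{x}(t)|\,dt \\
&\ge \left|\int_0^1 \sqrt{\lambda(x(t))}\,\dot{x}(t)\,dt\right|
= \left|\Phi(x(1))-\Phi(x(0))\right| = \Phi(p).
\end{align*}
Here the first inequality discards the $\dot{y}$ term, the second is the triangle inequality for integrals, and the last equality uses $\frac{d}{dt}\Phi(x(t))=\sqrt{\lambda(x(t))}\,\dot{x}(t)$ together with $x(0)=0$ and $x(1)=p$. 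Crucially, this bound depends only on the shared $x$-coordinate $p$ of the points of $L_A$, so taking the infimum over all such $\gamma$ and all $Q\in L_A$ gives $d_g(O,L_A)\ge\Phi(p)$.

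Combining the two directions squeezes $\Phi(p)\le d_g(O,L_A)\le d_g(O,A)\le L_g(\overrightarrow{OA})=\Phi(p)$, forcing all three quantities to coincide. Since $\overrightarrow{OA}$ is then a curve joining $O$ to $A$ whose length realizes $d_g(O,A)$, it is a minimizing geodesic after arc-length reparametrization, which completes the claim. I do not anticipate a substantive obstacle: everything reduces to the $y$-translation invariance of $g$, and the log-convexity of $\lambda$ plays no role here (it enters only in verifying $K(g)<0$ and completeness elsewhere in Proposition \ref{exp_ex1}). The one delicate point is the sign bookkeeping inside the absolute values, which is why I isolate the trivial case $p=0$ and treat $p<0$ by the analogous estimate with $|\Phi(p)|$ in place of $\Phi(p)$.
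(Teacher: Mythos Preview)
Your argument is correct and is essentially the same as the paper's: both drop the $\dot y$-contribution and use that $\int_0^1\sqrt{\lambda(x(t))}\,|\dot x(t)|\,dt$ dominates $\int_0^p\sqrt{\lambda}\,dx$. Your introduction of the antiderivative $\Phi$ makes the second inequality (handling possible non-monotonicity of $x(t)$) a touch more explicit than the paper's one-line change of variables, but the content is identical.
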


To see the above claim is true, we consider any curve $\alpha: [0,1] \to \mathbb{C}$ that connects the origin $O$ and any point $Q \in L_A$. Let $\alpha(t)=(x(t), y(t))$. Then
\begin{align*}
L_g(\alpha(t))&= \int_0^1 \sqrt{\lambda(x(t))} \sqrt{(x'(t))^2 +(y'(t))^2}\,dt \\
&\ge \int_0^1 \sqrt{\lambda(x(t))} |x'(t)|\,dt \\
&\ge \int_0^p \sqrt{\lambda(x)} \,dx=L_g(\overrightarrow{O A}).
\end{align*}

\begin{proof}[Proof of Proposition \ref{exp_ex1}]

Step 1: We show $g=\lambda(x)(dx^2+dy^2)$ is complete.

Recall that $\lambda$ is decreasing and $\lambda>1$ when $x<0$. We only need to consider the right half plane. A direct computation:
\[
L_g(\overrightarrow{O A})= \int_0^p \sqrt{\lambda(t)} \,dt= \int_0^p \frac{1}{1+t}\,dt= \log (1+p).
\]
Let $\{z_n=(x_n,y_n)\}_{n=1}^{\infty}$ be any unbounded (with respect to the Euclidean topology) sequence on $\mathbb{C}$. We prove that $\{d_g(O, z_n)\}$ is unbounded. By Claim \ref{minigeo}, if $x_n\to \infty$, we already have $d_g(O,z_n) \to +\infty$. It suffices to consider the case that
$\{x_n\}$ is bounded while $y_n \to \infty$. Suppose $\{d_g(O,z_n)\}$ is bounded. Then for any $\epsilon>0$, there exists a curve $\gamma_n(t)=(x_n(t),y_n(t)): [0,1] \rightarrow \mathbb{C}$ from $O$ to $z_n$ with $L_g(\gamma_n(t)) \leq d_g(O,z_n)+\epsilon$. It follows from Claim \ref{minigeo} that there exists some $M>0$ with $\sup_{t\in[0,1], \forall n} |x_n(t)| \leq M$. Hence
\[
L_g(\gamma_n(t))=\int_{\gamma_n} \sqrt{\lambda(x(t))}|\gamma_n'(t)|\,dt \ge \frac{1}{1+M}L_0(\gamma_n),
\]
where $L_0$ is the Euclidean length. It is impossible since $L_0(\gamma_n)  \geq L_0(\overrightarrow{O z_n})\rightarrow\infty$ as $y_n \to \infty$.

We observe that the curvature of $g$ on the right half plane is constant:
\begin{equation*}
    K=-\frac{\Delta \log \lambda}{2\lambda}=-\frac{(\log \lambda)''}{2\lambda}=-1.
\end{equation*}
For given $\lambda$, such as $e^{A(x+\frac{1}{2})^2-8x-4+2\log 2}$ when $x<-\frac{1}{2}$, $K \to 0$ when $x \to -\infty$.

Step 2: Now that $g$ is complete,  (\ref{expogrowth1}) follows from Claim \ref{minigeo}.

Step 3: We study the volume growth of geodesic balls of $g$.

Consider a triangle $D_a=\{0\le x\le a, |y|\le a-x \}$. Let $\log (1+a)=r$. Then $(a,0) \in \partial B_g(O,r)$ and $D_a \subset B_g(O,(\sqrt 2 +1)r)$.
 \begin{equation}
\begin{aligned}
\operatorname{Vol}D_a&=\int_0^a \int_{-a+x}^{a-x} \frac{1}{(1+x)^2}\,dxdy\\
&=2a(1-\frac{1}{1+a})-2(\log (1+a)-\frac{a}{1+a})\\
&=2(e^r-1)-2r.
\end{aligned}\nonumber
\end{equation}
Therefore we conclude $\operatorname{Vol}B_g(O,(\sqrt 2 +1)r) \ge Ce^r$.
\end{proof}

Based on Proposition \ref{exp_ex1}, we may construct another complete metric $\widehat{g}$ on $\mathbb{C}$ with negative curvature so that $K(\widehat{g})(p) \rightarrow 0$ for any $d(p, O) \rightarrow +\infty$ if $p$ is outside from a sector at the origin with an arbitrarily small openning angle. Take a piecewise smooth function, for example, $\mu(y)=e^{|y|-1}$ when $|y|\ge 1$ and $\mu(y)=1$ for $|y|\le 1$. For an abuse of notation, let $\mu$ denote a mollification of $\mu$ near $|y|=1$ so that $\mu \geq 1$ and $(\ln \mu)^{\prime\prime} \geq 0$ on $\mathbb{R}$. Obviously we may choose $\mu$ as an even function. Now we consider
\[
\widehat g=\mu g=\lambda(x) \mu(y) (dx^2+dy^2).
\]
As $\widehat{g} \geq g$, $\widehat{g}$ is also a complete metric. $\log (\lambda \mu)$ is actually subharmonic (in fact it is even convex). Compute the Gauss curvature on the right half plane outside the strip $\{|y|\le 1+\delta\}$ for some $\delta>0$ small enough:
\begin{align*}
K(\widehat{g})=-\frac{\Delta \log (\lambda\mu)}{2\lambda \mu}=-\frac{1}{e^{|y|-1}}.
\end{align*}
We may further modify $\mu$ so that the height of the horizontal strip $\{x>0,\ |y| \leq 1\}$ is smaller than any given positive number. In the spirit of Proposition \ref{exp_ex1}, we show:

\begin{proposition}\label{exp_ex2}
The complete K\"ahler metric $\widehat{g}=\lambda(x) \mu(y) (dx^2+dy^2)$ on $\mathbb{C}$ satisfies the following properties.
\begin{enumerate}
    \item  $K(\widehat{g}) < 0$ everywhere. Moreover, $K(p) \rightarrow 0$ as $d_{\hat{g}}(p, O) \rightarrow \infty$ as long as $p \in \mathbb{C} \setminus E$ where $E$ consists of two horizontal strips defined by $\{z=x+yi\ |\ x>0, \   |y \pm 1|<\delta\}$ for any given $\delta \in (0, \frac{\pi}{4})$.
    \item  $\widehat{g}$ does not have finite total curvature.
    \item  Any nonconstant holomorphic function is at least of exponential growth.
    \item  The volume of geodesic balls satisfies $\operatorname{Vol}(B_{\widehat{g}}(O,r)) \geq C e^r$ for some constant $C>0$.
\end{enumerate}
\end{proposition}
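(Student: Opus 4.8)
The plan is to exploit that $\widehat{g}=\lambda(x)\mu(y)(dx^2+dy^2)$ is conformally flat with a \emph{separated} conformal factor, so every relevant quantity splits into an $x$-part and a $y$-part. The starting point is the conformal curvature formula
\[
K(\widehat{g})=-\frac{\Delta\log(\lambda\mu)}{2\lambda\mu}=-\frac{(\log\lambda)''(x)+(\log\mu)''(y)}{2\lambda(x)\mu(y)}.
\]
Since $\lambda$ is chosen strictly log-convex (so $(\log\lambda)''>0$ everywhere) and $\mu$ is log-convex (so $(\log\mu)''\ge 0$), the numerator is strictly positive, which gives $K(\widehat{g})<0$ everywhere. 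For the asymptotic statement I would plug in the explicit pieces: on $\{x>0,\ |y|>1\}$ one has $(\log\lambda)''=2/(1+x)^2$, $\lambda=1/(1+x)^2$, $(\log\mu)''=0$ and $\mu=e^{|y|-1}$, so the formula collapses to $K(\widehat{g})=-e^{1-|y|}$. I would then check that along any sequence leaving every compact set while staying in $\mathbb{C}\setminus E$ one must have $|y|\to\infty$ (or $x\to-\infty$, where $\lambda\to\infty$ forces the quotient to vanish), so that $K\to 0$; here $E$ is exactly the thin region around the positive $x$-axis on which the central value $K=-1$ persists.

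For (2), the same splitting makes the total curvature transparent. As a measure $|K|\,dV_{\widehat{g}}=\frac12\big((\log\lambda)''(x)+(\log\mu)''(y)\big)\,dx\,dy$, and since $(\log\mu)''\ge 0$ with $\int_{\mathbb{R}}(\log\mu)''\,dy=(\log\mu)'(+\infty)-(\log\mu)'(-\infty)=2$, integrating first in $y$ and then in $x$ gives $\int_{\mathbb{C}}|K|\,dV_{\widehat{g}}\ge \frac12\cdot 2\cdot\int_{\mathbb{R}}dx=+\infty$. Hence $\widehat{g}$ cannot have finite total curvature.

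For (3), the key geometric input is the analogue of Claim \ref{minigeo}: because $\mu(0)=1$, the horizontal segment from $O$ to $(p,0)$ still has length $\int_0^p\sqrt{\lambda}\,dx=\log(1+p)$ and, by the same length comparison (using $\mu\ge 1$ to discard the transverse contribution), remains a minimizing geodesic. Thus $(e^r-1,0)\in \partial B_{\widehat{g}}(O,r)$, so $B_{\widehat{g}}(O,r)$ reaches Euclidean modulus $e^r-1$. Combined with the elementary fact that a nonconstant entire function $f$ satisfies $\max_{|z|\le R}|f|\ge cR$ for $R\ge 1$ (if $a_m$ is its lowest nonzero nonconstant Taylor coefficient, Cauchy's estimate gives $\max_{|z|\le R}|f|\ge|a_m|R^m\ge|a_m|R$), this yields $\sup_{B_{\widehat{g}}(O,r)}|f|\ge c(e^r-1)$, i.e. at least exponential growth.

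Finally, (4) is the main obstacle, since here the balls $B_{\widehat{g}}(O,r)$ are genuinely smaller than $B_g(O,r)$ and the triangle trick of Proposition \ref{exp_ex1} no longer fits inside the ball. The plan is to bound the distance from above along a horizontal-then-vertical path,
\[
d_{\widehat{g}}(O,(x,y))\le \log(1+x)+\frac{1}{1+x}\int_0^{|y|}\sqrt{\mu(s)}\,ds,
\]
and to take $\Omega_r$ to be the sublevel set of the right-hand side below $r$. Integrating $\lambda\mu$ over $\Omega_r$, the $y$-integral produces a factor $\sim \big((1+x)(r-\log(1+x))\big)^2$ that cancels $\lambda(x)$, and the substitution $\tau=\log(1+x)$ reduces the $x$-integral to $\frac12\int_0^r(r-\tau)^2e^\tau\,d\tau$, which is asymptotic to $e^r\int_0^\infty u^2e^{-u}\,du=2e^r$. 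This gives $\operatorname{Vol}(B_{\widehat{g}}(O,r))\ge Ce^r$. The delicate points throughout are ensuring the path estimate is tight enough to recover the sharp exponent and controlling the mollification regions of $\lambda$ and $\mu$, where one has only the qualitative bounds $(\log\lambda)''>0$ and $(\log\mu)''\ge 0$.
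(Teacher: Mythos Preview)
Your arguments for (1), (2), and (4) are correct. For (2) and (4) you take a somewhat different route from the paper: for (2) the paper restricts to $\{x>0,\ y>1\}$, computes $-K\,dV_{\widehat g}=\frac{dx\,dy}{(1+x)^2}$ there, and integrates first in $x$ then in $y$; your Tonelli argument on the whole plane using $\int_{\mathbb R}(\log\mu)''\,dy=2$ is cleaner. For (4) the paper works with the explicit region $D_a=\{0\le x\le a,\ 1\le y\le 1+2\log(1+x)\}$ (this is the set $\{\lambda\mu\le 1,\ y\ge 1\}$), computes its volume as $e^r-2+e^{-r}$, and checks $D_a\subset B_{\widehat g}(O,r+2)$ via the same horizontal-then-vertical path estimate you use; your sublevel-set $\Omega_r$ is larger and needs the substitution $\tau=\log(1+x)$, but reaches the same exponent. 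One caveat on (1): with $E$ exactly as stated (thin strips near $|y|=1$), the positive $x$-axis lies in $\mathbb C\setminus E$ yet has $K\equiv-1$ there, so the assertion ``one must have $|y|\to\infty$ or $x\to-\infty$'' is not literally true; your parenthetical reinterpretation of $E$ as the strip around the positive $x$-axis is what is actually needed.

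There is a genuine gap in your argument for (3). The first half---that the horizontal segment to $(e^r-1,0)$ is still $\widehat g$-minimizing, hence $\sup_{B_{\widehat g}(O,r)}|z|\ge e^r-1$---is correct and is the analogue of (\ref{expogrowth1}); this is all the paper itself establishes. But the Cauchy step fails: knowing that $B_{\widehat g}(O,r)$ \emph{reaches} modulus $e^r-1$ does not mean it \emph{contains} the disc $\{|z|\le e^r-1\}$, so $\max_{|z|\le R}|f|\ge cR$ cannot be invoked. In fact $B_{\widehat g}(O,r)$ penetrates the left half-plane only to $x\approx -\sqrt{c\log r}$ (since $\lambda(x)\sim e^{Ax^2}$ as $x\to-\infty$), and $f(z)=e^{-z}$ then satisfies $\sup_{B_{\widehat g}(O,r)}|f|=e^{-\inf x}\approx e^{\sqrt{c\log r}}$, which is slower than any polynomial. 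So the conclusion for arbitrary nonconstant $f$ does not follow from your argument; what is actually proved (by you and by the paper) is exponential growth of the coordinate function $z$.
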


\begin{proof}[Proof of Proposition \ref{exp_ex2}]
We show that $\hat{g}$ is not of finite total curvature. $$\widehat g=\lambda(x) \mu(y) (dx^2+dy^2)=\frac{e^{y-1}}{(1+x)^2}(dx^2+dy^2),$$
holds on $\{x>0,y>1\}$.
\[
K(\widehat{g})\,dV_{\widehat{g}}=-\frac{\Delta \log (\lambda\mu)}{2\lambda \mu}\lambda\mu\,dxdy=-\frac{1}{(1+x)^2}\,dxdy.
\]
Hence
$$\int_{\{x>0,y>1\}} -K(\widehat{g})\,dV_{\widehat{g}}=\int_1^{+\infty}\int_0^{+\infty}\frac{1}{(1+x)^2}\,dxdy=\int_1^{+\infty}1 \,dy=+\infty.$$

Next we study the volume growth of $\widehat g$. On $\{x\ge 0, y\ge 1\}$, $\lambda(x) \mu(y)=\frac{e^{y-1}}{(1+x)^2}$. Consider the curve such that $\lambda(x) \mu(y)=1$. Define $D_a=\{0\le x\le a,\ y \geq 1,\ \lambda(x) \mu(y)\le 1 \}$. Let $r=\log (1+a)$.
\begin{equation}
\begin{aligned}
\operatorname{Vol}D_a&=\int_0^a \int_1^{1+2\log (1+x)} \frac{e^{y-1}}{(1+x)^2}\,dxdy\\
&=\int_0^a \frac{(1+x)^2-1}{(1+x)^2} \,dx\\
&=a-\frac{a}{1+a}= e^r-2+e^{-r}.
\end{aligned}\nonumber
\end{equation}
On the other hand, for any $0<b\le a$,
\begin{equation*}
    \int_1^{1+2\log (1+x)}\sqrt{\frac{e^{y-1}}{(1+b)^2}}\,dy=\frac{2x}{1+b}.
\end{equation*}
Combined with the triangle inequality, for any point $p=(b, y_0) \in D_{a}$,
\[
d_{\hat{g}}(P, O) \leq \frac{2b}{1+b}+\frac{1}{1+b}+\ln(1+b) \leq \ln(1+a)+2=r+2.
\]
Hence $D_a \subset B_{\widehat{g}}(O,r+2)$.
\end{proof}

\begin{remark}
    We expect an example of nonpositive curved complete K\"ahler metric on $\mathbb{C}$ so that all conclusions in Proposition \ref{exp_ex2} holds and its Gauss curvature satisfies $\lim_{p \rightarrow \infty} K(p)=0$.
\end{remark}

Recall the Hadamard order of $f \in \mathcal{O}(\mathbb{C})$ on $(\mathbb{C}, g)$ is defined as
\begin{align}
    \operatorname{Ord}_{H} (f)=\limsup_{p \rightarrow \infty}  \frac{\log \log (|f(p)|)}{\log d_g(p, O)}.
    \label{Horderdef}
\end{align}
We may construct another example such that the Hadamard order of the coordinate function $z$ is $\infty$. Let $u(x,y)=\dfrac{1}{x^2(\log x)^2}$ when $x\ge e$. A direct computation:
\begin{align*}
    &(\log u)'=-2\frac{1+\log x}{x \log x}, \\
    &(\log u)''=2\frac{1+\log x+(\log x)^2}{x^2 (\log x)^2}>0.
\end{align*}

Now $(\log u)'_+(e)=-\frac{4}{e}$, $(\log u)''_+(e)=\frac{6}{e^2}$. Then we can extend $\log u$ to a $C^{\infty}$ subharmonic function on $\C$. $g=u(x,y)(dx^2+dy^2)$ is a complete metric with $K\le 0$.
The proof of completeness and the following inequality are similar to Proposition \ref{exp_ex1}.
\begin{equation*}
    d_g(O,(x,0))= \int_0^x \sqrt{u(t,0)}\,dt = \int_e^x \dfrac{1}{t\log t}\,dt+C=\log\log x+C.
\end{equation*}
Hence when $r>0$ large enough,
\begin{equation*}
    \sup_{z\in B_g(O,r)} \log\log|z| \ge r-C.
\end{equation*}
It follows that
\begin{equation*}
    \operatorname{Ord}_{H}|z|=\limsup_{r\to \infty} \frac{\log\log\sup_{z\in B_g(O,r)} |z|}{\log r} \ge \frac{r}{\log r} =\infty.
\end{equation*}
Note that the Gauss curvature along the $x$-axis
\begin{equation*}
    K=-\frac{\Delta \log u}{2u}=-(1+\log x+(\log x)^2) \to -\infty.
\end{equation*}

It is plausible that an answer to the following question could illuminate the function theory on $\mathbb{C}$ with nonpositive curvature.

\begin{question}
    Let $g$ be a complete K\"ahler metric on the complex plane with its Gauss curvature $-c \leq K(g) \leq 0$ for some constant $c>0$. Is it true that there exists some nonconstant $f \in \mathcal{O}(\mathbb{C})$ with $\operatorname{Ord}_{H} (f) \leq 1$?
\end{question}

\bibliographystyle{acm}

\bibliography{neg_curved}

\end{document}